%% file: self_referential_leading_digits.tex
\documentclass[11pt,a4paper]{article}
\input{preamble_en.tex}

\title{Self-Referential Leading Digits of Exponential Sequences:\\
Arithmetic Structure and Certified Search}
\author{Zihang Fang}
\newcommand{\paperaffiliation}{School of Mathematical Sciences\\
University of Science and Technology of China\\
No. 96 Jinzhai Road, Baohe District, Hefei 230026, Anhui,
People's Republic of China}
\newcommand{\paperemail}{jiuxun@mail.ustc.edu.cn}
\newcommand{\paperorcid}{0009-0001-6001-8638}
\newcommand{\paperclass}{Primary 11A63; Secondary 11J70, 11J71, 11Y16}
\newcommand{\paperkeywords}{leading digits, shrinking targets, Lambert
$W$ function, discrepancy, resonance, continued fractions, certified search}

\hypersetup{
  pdftitle={Self-Referential Leading Digits of Exponential Sequences},
  pdfauthor={Zihang Fang},
  pdfsubject={Arithmetic structure and certified search},
  pdfkeywords={leading digits, shrinking targets, Lambert W function,
    discrepancy, resonance, continued fractions, certified search}
}

\begin{document}
\maketitle

\begin{abstract}
For $c>1$ and an integer radix $b\ge2$, we study the positive integers $m$
for which $mb^k\le c^m<(m+1)b^k$ for some $k\ge0$; for integer $c$, this is
the self-prefix leading-digit condition.  We derive an exact shrinking-target
criterion; for $c\ge2$, an exact signed-discrepancy identity isolates both
infinitude and the conjectural logarithmic count.  For $c\ge2$ with
nonintegral logarithmic slope, Lambert
$W_{-1}$ inversion produces a candidate sequence with an eventual two-gap
law and an exact counting formula; for $(c,b)=(2,10)$ all consecutive
candidate gaps are $3$ or $4$.  For algebraic $c$ with irrational
$\log_b c$, the Lambert-root phases satisfy deterministic moving-target
asymptotics in an explicit nontrivial power range strictly below the critical
scale.  For irrational logarithmic slope,
actual hits obey fixed-difference and arithmetic-chain rigidity; for
multiplicatively independent integer parameters, coherent endpoint hits at
floor resonance centers force every intermediate term.  Finally, set
$\rho=\{\log_b c\}$.  For fixed multiplicatively independent integers
$c,b$, an interpolated continued-fraction locator has bit complexity
$O(N^{1-1/\nu}\operatorname{polylog}N)$ for every $\nu>\mu(\rho)$.  We give
an explicit certified instance for $(2,10)$, whose infinitude remains open.
\end{abstract}

\begin{center}
\begin{minipage}{0.90\textwidth}
\small
\textbf{2020 Mathematics Subject Classification.} \paperclass\par
\textbf{Keywords.} \paperkeywords
\end{minipage}
\end{center}

\clearpage
\tableofcontents
\clearpage

\input{sections/en/01_introduction.tex}
\input{sections/en/02_exact_criterion.tex}
\input{sections/en/03_lambert_layers.tex}
\input{sections/en/04_counting_identity.tex}
\input{sections/en/05_arithmetic_structure.tex}
\input{sections/en/06_certified_search.tex}
\input{sections/en/07_fixed_parameter_problems.tex}
\input{sections/en/08_submission_declarations.tex}

\printbibliography[heading=bibintoc]
\end{document}

%% file: preamble_en.tex
\usepackage[T1]{fontenc}
\usepackage{newtxtext}
\usepackage[a4paper,top=24mm,bottom=24mm,left=25mm,right=25mm,
  footskip=10mm]{geometry}
\usepackage{microtype}
\usepackage{mathtools}
\usepackage{amsthm}
\usepackage{newtxmath}
\usepackage{mathrsfs}
\usepackage{booktabs}
\usepackage{tabularx}
\usepackage{float}
\usepackage{enumitem}
\usepackage{titlesec}
\usepackage{tocloft}
\usepackage{xurl}
\usepackage[hidelinks]{hyperref}
\usepackage[nameinlink,noabbrev]{cleveref}
\usepackage[backend=biber,style=numeric-comp,sorting=nyt,maxnames=99,
  giveninits=true,doi=true,url=true,isbn=false]{biblatex}
\AtBeginBibliography{%
  \small
  \Urlmuskip=0mu plus 2mu\relax
  \setlength{\bibitemsep}{0.20\baselineskip}%
  \setlength{\bibparsep}{0pt}%
}

\allowdisplaybreaks[2]
\titleformat{\section}
  {\large\bfseries}{\thesection}{0.7em}{}
\titlespacing*{\section}{0pt}{3.0ex plus 0.8ex minus 0.3ex}{1.35ex}
\titleformat{\subsection}
  {\normalsize\normalfont}{\thesubsection}{0.65em}{}
\titlespacing*{\subsection}{0pt}{2.4ex plus 0.6ex minus 0.2ex}{0.9ex}

\newtheoremstyle{arxivplain}
  {8pt}{8pt}{\itshape}{}{\bfseries}{.}{0.55em}{}
\newtheoremstyle{arxivdefinition}
  {8pt}{8pt}{\normalfont}{}{\bfseries}{.}{0.55em}{}
\newtheoremstyle{arxivremark}
  {8pt}{8pt}{\normalfont}{}{\itshape}{.}{0.55em}{}
\theoremstyle{arxivplain}
\newtheorem{theorem}{Theorem}[section]
\newtheorem{proposition}[theorem]{Proposition}
\newtheorem{corollary}[theorem]{Corollary}
\newtheorem{lemma}[theorem]{Lemma}
\theoremstyle{arxivdefinition}
\newtheorem{definition}[theorem]{Definition}
\theoremstyle{arxivremark}
\newtheorem{remark}[theorem]{Remark}
\newtheorem{problem}[theorem]{Open problem}

\renewenvironment{abstract}{%
  \par\begin{center}\bfseries Abstract\end{center}%
  \small\begin{quote}
}{%
  \end{quote}\par\vspace{0.7em}
}

\newcommand{\T}{\mathbb T}

\newcommand{\e}{\mathrm e}
\newcommand{\ind}{\mathbf 1}
\newcommand{\Sset}{\mathcal S}
\newcommand{\dist}{\operatorname{dist}}

\makeatletter
\renewcommand{\maketitle}{%
  \thispagestyle{plain}
  \begin{center}
    \vspace*{1.0cm}
    {\fontsize{18}{22}\selectfont\bfseries\@title\par}
    \vspace{1.1em}
    {\large\@author\par}
    \vspace{0.45em}
    {\small\paperaffiliation\par}
    \vspace{0.2em}
    {\small
      \textit{Corresponding author: }
      \href{mailto:\paperemail}{\paperemail}\par
      \href{https://orcid.org/\paperorcid}{ORCID: \paperorcid}\par}
  \end{center}
  \vspace{1.0em}
}
\makeatother

%% file: sections/en/01_introduction.tex
\section{Introduction}

Let $b\ge2$ be an integer radix and let $c>1$.  Define
\begin{equation}\label{eq:def-S}
  \Sset_{c,b}
  =\left\{m\ge1:\text{there is }k\in\mathbb Z_{\ge0}\text{ such that }
    mb^k\le c^m<(m+1)b^k\right\},
\end{equation}
and write $A_{c,b}(X)=\#(\Sset_{c,b}\cap[1,X])$.  When $c$ is an integer,
membership in \cref{eq:def-S} says exactly that the base-$b$ digit expansion
of $c^m$ begins with the base-$b$ digit expansion of $m$.  The motivating
case is
\begin{equation}\label{eq:core-case}
  \Sset_{2,10}=\{m\ge1:2^m\text{ begins with }m\text{ in decimal}\}.
\end{equation}
It was investigated by van de Lune~\cite{vandelune1978} and is recorded as
OEIS A100129~\cite{oeisA100129}.  Computations suggest that
$\Sset_{2,10}$ is infinite and that
\begin{equation}\label{eq:conjectural-log-law-intro}
  A_{2,10}(X)\sim\log_{10}X,
\end{equation}
but neither assertion is known.

Put
\begin{equation}\label{eq:basic-notation}
  \alpha=\log_b c,\qquad
  F_{c,b}(m)=m\alpha-\log_bm,\qquad
  \delta_b(m)=\log_b\left(1+\frac1m\right).
\end{equation}
We write $\{x\}=x-\lfloor x\rfloor$,
$d_+(x)=\lceil x\rceil-x$, and
$\|x\|=\dist(x,\mathbb Z)$; throughout, $\log$ denotes the natural
logarithm.  The elementary but decisive reformulation is
\begin{equation}\label{eq:shrinking-target-intro}
  m\in\Sset_{c,b}
  \quad\Longleftrightarrow\quad
  F_{c,b}(m)\ge0\ \text{ and }\
  \{F_{c,b}(m)\}<\delta_b(m).
\end{equation}
Thus the original problem is a one-sided shrinking-target problem at the
critical scale $\delta_b(m)\asymp1/m$.  Uniform distribution for fixed
intervals does not control this scale.  Likewise, a metric theorem holding
for almost every value of $c$ cannot be specialized to the prescribed
constant $c=2$.  We therefore study exact structure and certified
localization for fixed parameters, keeping every conclusion logically
separate from the still-open infinitude problem.

For $c\ge2$, there is also an exact reason that the expected order of
magnitude is logarithmic.  The target masses telescope:
\begin{equation}\label{eq:mass-telescope-intro}
  \sum_{m\le N}\delta_b(m)=\log_b(N+1).
\end{equation}
This identity alone does not give a counting theorem, because the phases and
the shrinking windows are generated by the same fixed orbit.  Section~4
isolates the resulting dependence exactly.  If
$y_n=n\alpha-\log_b(n+1)$ and
\[
  B_{\alpha,b}(N)
  =\#\{0\le n<N:\{y_n\}\in[1-\rho,1)\},
  \qquad \rho=\{\alpha\},
\]
then
\begin{equation}\label{eq:exact-discrepancy-intro}
  A_{c,b}(N)
  =\log_b(N+1)+B_{\alpha,b}(N)-\rho N+\{y_N\}.
\end{equation}
Thus \cref{eq:conjectural-log-law-intro} is equivalent to a signed
discrepancy estimate of order $o(\log N)$, far sharper than ordinary uniform
distribution.  We use \cref{eq:exact-discrepancy-intro} as a diagnostic
identity rather than as an unproved independence heuristic.

\subsection{Main results}

The results fall into four groups.  The first describes all possible
Lambert candidates, the second gives an exact counting identity for the
actual solutions, the third imposes arithmetic rigidity on actual hits,
and the fourth turns continued-fraction separation into a certified
sublinear search.

\emph{I. Lambert candidate geometry.}
Assume here that $c\ge2$.  Write $\alpha=s+\rho$, where
$s=\lfloor\alpha\rfloor$ and
$0<\rho<1$, and put $L=\rho\log b$.  In each sufficiently large
logarithmic layer $j$, the set of possible integers is
$\mathbb Z\cap[x_j,z_j)$, where the exact endpoints are
\begin{equation}\label{eq:lambert-endpoints-intro}
  x_j=-\frac1L W_{-1}(-Lb^{-j}),
  \qquad
  z_j=-\frac1L W_{-1}\left(-\frac{L}{b^\rho b^j}\right)-1.
\end{equation}
The width satisfies
\begin{equation}\label{eq:lambert-width-intro}
  z_j-x_j=\frac1{j\log b}
  +O_{c,b}\left(\frac{\log j}{j^2}\right),
\end{equation}
has a convergent Lagrange-inversion expansion to all orders, and is a
completely monotone function of the continuous layer parameter; see
\cref{thm:lambert-layer,prop:all-orders-width,prop:width-complete-monotonicity}.
In particular, a large layer contains at
most one integer candidate $m_j=\lceil x_j\rceil$.

The candidate sequence itself has a rigid discrete geometry.  With
$a=1/\rho$ and $k=\lfloor a\rfloor$, its consecutive gaps eventually
belong to $\{k,k+1\}$, and the longer gaps have an exact telescoping count
with a logarithmic drift.  More directly, for a suitable initial layer
$j_0$ and every integer $M>1/L$,
\begin{equation}\label{eq:candidate-count-intro}
  \#\{j\ge j_0:m_j\le M\}
  =\max\left\{0,
  \left\lfloor\rho M-\log_bM\right\rfloor-j_0+1\right\}.
\end{equation}
This is an exact count of possible candidates, not of actual prefix hits.
For $(c,b)=(2,10)$ the result is global from the first layer:
\begin{equation}\label{eq:two-candidate-summary-intro}
  m_{j+1}-m_j\in\{3,4\},
  \qquad
  \#\{j\ge1:m_j\le M\}
  =\left\lfloor M\log_{10}2-\log_{10}M\right\rfloor
  \quad(M\ge2).
\end{equation}
These statements are proved in
\cref{thm:lambert-candidate-two-gap,cor:two-candidate-two-gap}.
They include more than a qualitative two-gap assertion.  If
$\vartheta=\{1/\rho\}$ and $G_+(N)$ counts the longer gaps from a fixed
large layer to $N$, then
\begin{equation}\label{eq:long-gap-drift-intro}
  G_+(N)=\vartheta N+\frac{\log N}{L}+O_{c,b}(1).
\end{equation}
After subtracting the linear term and this deterministic logarithmic drift,
the count on every sufficiently late interval has absolute discrepancy
smaller than $1+o(1)$.  Even when $1/\rho$ is an integer, the longer gap
therefore occurs infinitely often, but only logarithmically often.  This
candidate geometry is independent of the harder question of which $m_j$
also satisfy $m_j<z_j$.

Lambert inversion also permits a deterministic distribution theorem for
fixed algebraic parameters.  If $c$ is algebraic and $\log_b c$ is
irrational, an effective finite-type exponent $\eta\ge1$ is obtained from
Matveev's lower bound for linear forms in logarithms~\cite{matveev2000}.
Combining it with the Tichy--Turnwald discrepancy theorem
\cite{tichyturnwald1986}, we prove that whenever
\begin{equation}\label{eq:subcritical-range-intro}
  0<\tau<\sigma<\frac1{\eta+1}\le\frac12,
\end{equation}
one has, for every fixed $\kappa>0$,
\begin{equation}\label{eq:subcritical-asymptotic-intro}
  \#\{j\le N:d_+(x_j)<\kappa j^{-\tau}\}
  =\frac{\kappa}{1-\tau}N^{1-\tau}
   +O_{c,b,\kappa,\tau,\sigma}
    \left(N^{1-(\sigma+\tau)/2}\right).
\end{equation}
The estimate is deterministic for the prescribed algebraic pair.  Its
range is nevertheless strictly subcritical: the genuine prefix width in
\cref{eq:lambert-width-intro} has exponent $\tau=1$.

\emph{II. Exact counting and signed discrepancy.}
The telescoping target mass in \cref{eq:mass-telescope-intro} is not merely
heuristic.  The exact identity \cref{eq:exact-discrepancy-intro}, proved as
\cref{thm:exact-discrepancy}, converts the original count into the signed
discrepancy of one fixed interval for the nonlinear sequence
$y_n=n\alpha-\log_b(n+1)$.  In particular, with
$D_{\alpha,b}(N)=B_{\alpha,b}(N)-\rho N$,
\begin{align}
  \Sset_{c,b}\text{ is infinite}
  &\Longleftrightarrow
  \log_b(N+1)+D_{\alpha,b}(N)\text{ is unbounded},
  \label{eq:infinitude-discrepancy-summary-intro}\\
  A_{c,b}(N)\sim\log_bN
  &\Longleftrightarrow D_{\alpha,b}(N)=o(\log N).
  \label{eq:loglaw-discrepancy-summary-intro}
\end{align}
Ordinary equidistribution only gives $D_{\alpha,b}(N)=o(N)$, so the
identity identifies the precise fixed-parameter obstruction instead of
assuming independence between the phases and the windows.

\emph{III. Arithmetic resonances of actual hits.}
For multiplicatively dependent integers $c=d^p$ and $b=d^q$, we completely
classify the solution set by an eventually periodic congruence and obtain
\begin{equation}\label{eq:dependent-summary-intro}
  A_{c,b}(X)=\log_bX+O_{c,b}(1);
\end{equation}
see \cref{thm:dependent}.  The remaining results in this group concern
multiplicatively independent parameters and therefore do not follow from
that periodic mechanism.

Two different notions of resonance are needed.  The first starts with two
unknown hits separated by a fixed difference and derives a necessary shell
containing their smaller index.  It therefore controls every repeated
difference.  The second starts with a good rational approximation
$p/q<\alpha$, constructs a distinguished floor center, and proves exact
containment among the corresponding prefix windows.  It gives stronger
conclusions on a thinner, explicitly organized family.  Keeping these two
mechanisms separate avoids treating a sufficient nesting configuration as
if it classified all double hits.

Every pair of actual hits with a prescribed difference $h$ lies in one of
finitely many explicit resonance shells.  This yields effective fixed-gap
sparsity and an exact count of the candidate integers in each shell, hence
an explicit upper sieve for double hits.  If
$\mu(\alpha)<\infty$, then, for every $\nu>\mu(\alpha)$,
\begin{equation}\label{eq:fixed-difference-summary-intro}
  r_{c,b}(h)
  :=\#\{m:m,m+h\in\Sset_{c,b}\}
  \ll_{c,b,\nu}h^{\nu-2}.
\end{equation}
The same shell geometry controls arithmetic progressions of hits.  Writing
$\beta_q=\{q\alpha\}$, their maximal length satisfies
\begin{equation}\label{eq:chain-bound-summary-intro}
  \Lambda_{c,b}(q)
  \le2+\left\lceil
  \frac{b^{\beta_q}+1}{q(b^{\beta_q}-1)}
  \right\rceil.
\end{equation}
Moreover, once a chain is sufficiently long in terms of $b$, the rational
$\lfloor q\alpha\rfloor/q$, after reduction to lowest terms, must be a
regular continued-fraction convergent to $\alpha$ from below.  The
reduction clause is essential: $q$ itself need not be a convergent
denominator.  These results appear in
\cref{thm:resonance-shell,thm:arithmetic-hit-chain,cor:long-chain-convergent-direction,prop:fixed-difference-exact-sieve}.

For multiplicatively independent integers $c,b\ge2$, a second rigidity
mechanism occurs at floor resonance centers.  Given
$q\alpha-p=\varepsilon>0$, put
\begin{equation}\label{eq:floor-center-summary-intro}
  \varrho=b^\varepsilon,
  \qquad M=\left\lfloor\frac q{\varrho-1}\right\rfloor.
\end{equation}
When $M\ge1$, the later prefix window at this distinguished integer is
contained in the earlier one, so $M+q\in\Sset_{c,b}$ implies
$M\in\Sset_{c,b}$.  For the full family of shifted windows, their common
coherent intersection persists through a maximal finite range given exactly
by
\begin{equation}\label{eq:coherent-depth-summary-intro}
  J_{\max}
  =\max\{J\ge1:\varrho^{J-1}(M+q)<M+Jq+1\}.
\end{equation}
The strict inequality records the half-open upper endpoint and is essential.
Within that range, hits at the two endpoints of
\begin{equation}\label{eq:endpoint-filling-summary-intro}
  M+q,\ M+2q,\ldots,\ M+Jq
\end{equation}
force $M$ and every intermediate term of this progression to be hits, and
their radix scales form an arithmetic progression as well.  The exact
coherent depth has a closed
form involving $W_{-1}$.  After finitely many initial centers, positive-error
continued-fraction convergents give centers growing by at least a factor of
four.  For each such convergent $p_j/q_j$, write
$\varepsilon_j=q_j\alpha-p_j>0$, and let $M_j$ and $J_j$ be its floor center
and exact coherent depth.  Set
\begin{equation}\label{eq:coherent-skeleton-definition-intro}
  T(X)=\sum_{\substack{\varepsilon_j>0\\M_j\le X}}(J_j+1).
\end{equation}
Thus $T(X)$ is the aggregate size of the full potential skeletons whose
centers are at most $X$; some skeleton points may themselves exceed $X$.
For every $\epsilon>0$,
\begin{equation}\label{eq:coherent-skeleton-summary-intro}
  T(X)=o_{c,b}(\sqrt X),
  \qquad
  T(X)\ll_{c,b,\epsilon}
  X^{1/2-1/\mu(\alpha)+\epsilon}
  \quad\text{when }\mu(\alpha)<\infty.
\end{equation}
See \cref{thm:floor-resonance-nesting,thm:multistep-floor-resonance,prop:exact-coherent-depth,thm:convergent-center-skeleton}.
These are
rigidity statements about potential configurations: they do not assert
that any resonance center is itself a hit.

\emph{IV. Certified sublinear localization.}
Continued fractions also separate the phases inside a block of consecutive
indices.  We turn this separation into an exact locator using outward
interval arithmetic, rational grid phases, Euclidean floor sums, and final
integer verification.  The output-sensitive form returns, without scanning
the block, a certified superset containing every genuine hit; see
\cref{thm:safe-locator,prop:output-sensitive-locator}.

At a high level, one block is processed as follows.  Adjacent convergent
denominators provide a lower bound for the circular separation of its
phases.  The true target arc is enclosed outward on a rational grid, with a
fixed reserve smaller than that separation.  Euclidean floor sums count the
grid phases in an index interval, and binary descent locates the unique
nonempty subinterval when a reported index exists.  Only that index is
then subjected to the original inequalities in \cref{eq:def-S}.  Certified
ball arithmetic and a linear-form separation bound ensure that all floors,
ceilings, and endpoint signs are decided exactly rather than in a real-RAM
model.

The interpolated construction chooses, at each left endpoint, the largest
half-margin block supplied by all available convergents.  Let
$\rho=\{\log_b c\}$ and suppose that $\mu(\rho)<\infty$.  For every
$\nu>\mu(\rho)$, the number of certified blocks required up to $N$ is
\begin{equation}\label{eq:block-count-summary-intro}
  O_{\rho,b,\nu}\left(N^{1-1/\nu}\right).
\end{equation}
For fixed multiplicatively independent integers $c,b$, including
construction of the convergents, grid endpoints, floor sums, and exact
verification of every reported index, the total bit complexity is
\begin{equation}\label{eq:bit-complexity-summary-intro}
  O_{c,b,\nu}\left(
  N^{1-1/\nu}\operatorname{polylog}N\right).
\end{equation}
If the partial quotients are bounded, the exponent is $1/2$.  This is
\cref{thm:interpolated-blocks}.  For $(2,10)$ we additionally give a
machine-checkable interval-arithmetic certificate showing that, beyond
\begin{equation}\label{eq:explicit-threshold-summary-intro}
  1\,914\,818\,931\,502\,442,
\end{equation}
every block of length at most $44\,699\,994$ reduces to at most one
possible solution index.  The fixed block certificate is a concrete instance of the
locator, whereas the growing-denominator interpolation is what yields the
global sublinear bound.  Conversely, a singleton report is not a positive
hit certificate: it only reduces the block to one index requiring exact
verification.  This distinction is why extending the numerical range, by
itself, cannot prove either finiteness or infinitude.

\subsection{Relation to previous work}

The classical connection between leading digits and uniform distribution
modulo one is a basic source of the present problem; see
\cite{diaconis1977,bergerhill2015}.  Local Benford laws and the combinatorial
structure of leading-digit sequences give finer versions of that picture
\cite{caihildebrandli2019,heetal2020,rajagopaletal1984}.  The event studied
here is nevertheless of a different, diagonal form: at time $m$ the required
leading string is the expansion of $m$ itself.  Equivalently, the relevant
arc has endpoints $\log_bm$ and $\log_b(m+1)$ modulo one, so both its position
and its length depend on $m$, with length asymptotic to $(m\log b)^{-1}$.
Our concern is the exact arithmetic structure and certified localization of
this self-referential event for a prescribed orbit, rather than a general
leading-digit distribution law.

Shrinking targets for circle rotations and inhomogeneous approximation have
a substantial metric and dynamical theory
\cite{kurzweil1955,kim2007shrinking,tseng2008,kim2014refined,fuchskim2016};
recent work also treats moving targets under other hypotheses
\cite{michaudramirez2026}.  These results provide the natural context for
\cref{eq:shrinking-target-intro}, but their quantifiers and target
assumptions differ from the fixed-parameter question here.  Indeed, our
orbit may be written either as the nonlinear phase
$\{m\alpha-\log_bm\}$ hitting $[0,\delta_b(m))$, or as the rotation
$\{m\alpha\}$ hitting a prescribed moving arc.  Thus an almost-everywhere
recurrence statement, or a theorem for a different class of moving targets,
does not by specialization decide the critical recurrence problem for the
specified value $\alpha=\log_{10}2$.  We do not claim a general
non-applicability result for the shrinking-target literature.

One-sided Diophantine approximation is studied systematically in
\cite{hanclturek2019}.  Our use of it is more specific.  A sufficiently long
arithmetic progression of actual prefix hits first forces an explicit
one-sided inequality
\[
  0<\alpha-\frac pq<\frac1{2q^2}.
\]
After $p/q$ is reduced, the conclusion that it is a continued-fraction
convergent is the standard Legendre criterion~\cite{khinchin1997}.  The
contribution of the hit-chain theorem is therefore the implication from a
self-prefix configuration to this sharp approximation inequality, not a new
version of Legendre's theorem.

Classical gap and step phenomena for fractional-part and integer-part
sequences are developed, for example, in
\cite{slater1967,fraenkelholzman1995}.  They are a useful comparison for the
candidate sequence in Section~3.  Here the candidates form a
logarithmically perturbed Beatty-type sequence arising from an exact inverse
relation.  Direct control of its ceiling increments yields not only the
eventual two-gap set but also an exact candidate count and a telescoping
formula with a deterministic logarithmic drift.  These conclusions concern
Lambert candidates; they do not assert that either gap endpoint is an actual
prefix hit.

The inversion by the branch $W_{-1}$ is standard~\cite{corlessetal1996}, and
integral and Stieltjes representations of functions built from Lambert $W$
provide related analytic background~\cite{kaluginetal2012}.  We do not
regard the introduction of Lambert $W$ as a contribution by itself.  What is
used here is the problem-specific geometry of the difference of two coupled
$W_{-1}$ inverse roots: its exact layer interpretation, all-orders width
expansion, complete monotonicity, and the resulting arithmetic consequences
for the integer candidates.

Likewise, fast integer arithmetic, algorithmic continued fractions, and
certified ball arithmetic are standard tools
\cite{bachshallit1996,brentzimmermann2010,johansson2017arb}; certified
evaluation of Lambert $W$ is available in arbitrary precision
\cite{johansson2020lambertw}.  The algorithmic result of Section~6 is the
problem-specific locator obtained by combining continued-fraction
separation, outward rational-grid enclosures, Euclidean floor sums, and a
final exact test of the defining prefix inequalities.  Its intermediate
reports form a certified superset, whereas the output after final
verification is the exact set of hits.

The scope of the classification statements also matters.  The complete
classification in this paper is for multiplicatively dependent
\emph{integer} pairs $c,b$.  For a general rational slope
$\log_b c=P/Q$, the exact residue reduction is valid, and the integral
residue branches are classified, but a remaining nonintegral branch may
encode the orbit of a prescribed algebraic irrational under multiplication
by $b$.  No conclusion for those branches is inferred from the integer
classification.

\begin{table}[H]
  \centering
  \small
  \caption{Logical distinctions maintained throughout the paper.}
  \label{tab:logical-status}
  \begin{tabularx}{\textwidth}{@{}
    >{\raggedright\arraybackslash}p{0.18\textwidth}
    >{\raggedright\arraybackslash}X
    >{\raggedright\arraybackslash}X@{}}
    \toprule
    Object & Established here & Not implied \\
    \midrule
    Lambert candidates $m_j$
      & Exact candidate count and eventual two-gap law
      & A candidate need not be an actual prefix hit \\
    \addlinespace
    Actual hits $\Sset_{c,b}$
      & Exact criteria, signed-discrepancy identity, resonance rigidity, and
        a complete classification for dependent integer pairs
      & Infinitude for a fixed multiplicatively independent pair is not
        proved \\
    \addlinespace
    Locator reports $\mathcal R(M,H)$
      & Every genuine hit in the block is reported; final verification gives
        the exact answer
      & A reported index need not be a hit, and the locator does not prove
        that a block contains a hit \\
    \addlinespace
    $\Sset_{2,10}$
      & Exact structural criteria and machine-checkable localization
        certificates
      & Infinitude and $A_{2,10}(X)\sim\log_{10}X$ remain open \\
    \bottomrule
  \end{tabularx}
\end{table}

\subsection{Inputs, scope, and the critical boundary}

The external inputs used in the quantitative arguments have sharply
delimited roles.  Matveev's theorem supplies effective lower
bounds for the relevant linear forms in logarithms.  Tichy--Turnwald
supplies ordinary and logarithmically weighted discrepancy for
$an+\beta\log n$ at finite Diophantine type.  Standard continued-fraction
separation and Legendre's criterion provide the block packing and the
direction of long chains; see, for example,~\cite{khinchin1997}.  The exact
Lambert candidate count, the resonance-shell and endpoint-filling
structures, and their certified-search consequences are then proved for the
self-referential prefix problem in the present paper.

None of these inputs crosses the critical one-sided scale.  In Lambert
coordinates the unresolved condition is
\begin{equation}\label{eq:critical-condition-summary-intro}
  d_+(x_j)<z_j-x_j
  =\frac1{j\log b}
   +O_{c,b}\left(\frac{\log j}{j^2}\right)
  \quad\text{for infinitely many }j.
\end{equation}
Thus the infinitude of $\Sset_{2,10}$ and the logarithmic law
\cref{eq:conjectural-log-law-intro} remain open.  The distinction between
Lambert candidates and actual hits, and between certified localization and
existence, is maintained throughout.

\subsection{Organization}

Section~2 proves the exact prefix and endpoint criteria.  Section~3 develops
Lambert inversion, complete monotonicity, the two-gap law, and deterministic
subcritical discrepancy.  Section~4 gives the exact signed-discrepancy
counting identity.  Section~5 treats the arithmetic dichotomy, fixed-gap
resonance shells, hit chains, nested windows, endpoint filling, and the
coherent skeleton.  Section~6 constructs the certified locator and proves
its interpolated bit-complexity bound.  Section~7 records the remaining
fixed-parameter problems at the critical scale.

%% file: sections/en/02_exact_criterion.tex
\section{Exact prefix and endpoint criteria}

\begin{proposition}[Exact prefix criterion]\label{prop:criterion}
For every $c>1$, $b\ge2$, and $m\ge1$, the equivalence
\cref{eq:shrinking-target-intro} holds.  If $c\ge2$, the condition
$F_{c,b}(m)\ge0$ is automatic.
\end{proposition}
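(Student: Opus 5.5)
The plan is to take base-$b$ logarithms in the defining inequality and read off the integer $k$ as a floor. For fixed $m\ge1$ and an integer $k$, the condition $mb^k\le c^m<(m+1)b^k$ is equivalent, since $\log_b$ is strictly increasing, to
\[
  \log_b m+k\le m\alpha<\log_b(m+1)+k,
\]
that is, to
\[
  k\le F_{c,b}(m)<k+\delta_b(m),
\]
because $\log_b(m+1)-\log_b m=\delta_b(m)$. So $m\in\Sset_{c,b}$ exactly when some integer $k\ge0$ satisfies $k\le F_{c,b}(m)<k+\delta_b(m)$.

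For the forward direction, suppose such a $k\ge0$ exists. Then $F_{c,b}(m)\ge k\ge0$. Since $\delta_b(m)\le\log_b2\le1$, the inequality $k\le F_{c,b}(m)<k+1$ forces $k=\lfloor F_{c,b}(m)\rfloor$. Hence $\{F_{c,b}(m)\}=F_{c,b}(m)-k<\delta_b(m)$.

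For the converse, assume $F_{c,b}(m)\ge0$ and $\{F_{c,b}(m)\}<\delta_b(m)$. Put $k=\lfloor F_{c,b}(m)\rfloor$. This is a nonnegative integer, and it satisfies $k\le F_{c,b}(m)<k+\delta_b(m)$, so $m\in\Sset_{c,b}$. This proves the equivalence \cref{eq:shrinking-target-intro}.

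For the second assertion, it suffices to check that $c^m\ge m$ for all $m\ge1$ when $c\ge2$, since this is exactly $m\alpha\ge\log_b m$. Because $c\ge2$, we have $c^m\ge2^m$. The bound $2^m\ge m$ holds for every $m\ge1$, for instance by induction ($2^{m+1}=2\cdot2^m\ge2m\ge m+1$). Hence $c^m\ge m$, and so $F_{c,b}(m)\ge0$.

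There is no real obstacle here. The only points needing care are the uniqueness of $k$, which requires $\delta_b(m)\le1$ (true since $b\ge2$ and $m\ge1$), and the way $k\ge0$ translates into the sign condition $F_{c,b}(m)\ge0$.
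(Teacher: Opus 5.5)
Your proposal is correct and follows the paper's proof: take base-$b$ logarithms to get $k\le F_{c,b}(m)<k+\delta_b(m)$, choose $k=\lfloor F_{c,b}(m)\rfloor$ for the converse, and use $c^m\ge2^m\ge m$ for the automatic sign condition. Your explicit use of $\delta_b(m)\le\log_b2\le1$ to force $k=\lfloor F_{c,b}(m)\rfloor$ in the forward direction fills in a step the paper leaves implicit.
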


\begin{proof}
The prefix condition is equivalent to the existence of an integer $k\ge0$
such that
\[
  0\le m\log_b c-\log_bm-k
  <\log_b\left(1+\frac1m\right).
\]
If it holds, then $F_{c,b}(m)\ge0$ and the fractional-part inequality follows.
Conversely, under these two conditions one may take
$k=\lfloor F_{c,b}(m)\rfloor$.  If $c\ge2$, then $c^m\ge2^m\ge m$, so
$F_{c,b}(m)\ge0$.
\end{proof}

\begin{proposition}[Endpoint parametrization in the dependent branch]
\label{prop:dependent-endpoint-parametrization}
Suppose $c=d^u$ and $b=d^v$, where $d\ge2$ and $u,v\ge1$.  For
$e\in\{0,1\}$, all endpoint solutions are parametrized exactly by
\begin{equation}\label{eq:dependent-endpoint-parametrization}
\begin{gathered}
  t\in\mathbb Z_{\ge0},\qquad m=d^t-e\ge1,\\
  u(d^t-e)-t\equiv0\pmod v,\qquad
  k=\frac{u(d^t-e)-t}{v}\ge0.
\end{gathered}
\end{equation}
The admissible set of $t$ is eventually periodic.
\end{proposition}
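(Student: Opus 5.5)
Since $c=d^u$ and $b=d^v$, the defining inequality $mb^k\le c^m<(m+1)b^k$ becomes $m d^{vk}\le d^{um}<(m+1)d^{vk}$. Put $n=um-vk$, so the condition reads $m\le d^n<m+1$. \emph{Endpoint solutions} are the ones where one end of the half-open window is attained at a power of $d$. The lower endpoint gives $d^{um}=md^{vk}$, hence $m=d^t$ with $t=n$ ($e=0$). The relevant upper endpoint is $d^{um}=(m+1)d^{vk}$, which is excluded from the window; so I read the $e=1$ case as the boundary configuration $m+1=d^t$. The plan is to substitute $m=d^t-e$ and solve for $k$ in both cases.

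Fix $e\in\{0,1\}$ and $t\ge0$ with $m=d^t-e\ge1$. The endpoint relation is $d^{um}=d^t d^{vk}$. By unique factorization of integer powers of $d\ge2$, this holds exactly when $um=t+vk$, that is, when $vk=u(d^t-e)-t$. So an integer $k$ exists if and only if $u(d^t-e)-t\equiv0\pmod v$, and then $k=(u(d^t-e)-t)/v$ is the only choice. The requirement $k\in\mathbb Z_{\ge0}$ in \cref{eq:def-S} adds the condition $k\ge0$. For the converse, any triple satisfying \cref{eq:dependent-endpoint-parametrization} reverses these equalities, so $(m,k)$ realizes the endpoint. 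This gives a bijection between admissible $t$ and endpoint solutions, because $t\mapsto d^t-e$ is injective.

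For eventual periodicity, the nonnegativity condition can only fail for finitely many $t$: $u(d^t-e)-t\to\infty$, so $k\ge0$ holds for all large $t$. What remains is the congruence $ud^t-ue-t\equiv0\pmod v$. The term $t\bmod v$ is periodic with period $v$. The sequence $d^t\bmod v$ is eventually periodic: it is purely periodic once $t$ exceeds the largest exponent of a prime of $v$ that also divides $d$, with period the multiplicative order of $d$ on the $d$-coprime part of $v$. Combining the two periods by taking the least common multiple, the left side mod $v$ is eventually periodic in $t$, and so is the admissible set.

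I expect no real obstacle. The only care needed is the bookkeeping of the preperiod: the admissible set is periodic only after the nonperiodic part of $d^t\bmod v$ and the finitely many $t$ with $k<0$ or $m<1$ have been excluded.
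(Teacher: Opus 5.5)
Your proof is correct and follows the paper's route. You identify $t=um-vk$, obtain $m+e=d^t$ and $vk=u(d^t-e)-t$, and then deduce eventual periodicity from the eventual periodicity of $d^t \bmod v$, the periodicity of $t \bmod v$, and the eventual nonnegativity of $k$. The only difference is that you read off $m+e=d^{um-vk}$ directly by dividing, where the paper compares $r$-adic valuations for each prime $r\mid d$; your shortcut is valid. For $e=1$, though, you should carry out that same derivation explicitly, noting that $t\ge0$ because $m+1$ is a positive integer, rather than simply ``reading'' the case as $m+1=d^t$.
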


\begin{proof}
The endpoint equation is $d^{um}=(m+e)d^{vk}$.  Every prime divisor of
$m+e$ therefore divides $d$, and comparison of the valuations at every
prime $r\mid d$ gives
\[
  v_r(m+e)=(um-vk)v_r(d).
\]
Thus $t=um-vk$ is a nonnegative integer independent of $r$ and
$m+e=d^t$, proving the displayed parametrization and its converse.  The
sequence $d^t\pmod v$ is eventually periodic, $t\pmod v$ is periodic, and
the inequality defining $k\ge0$ is automatic for all sufficiently large
$t$.
\end{proof}

%% file: sections/en/03_lambert_layers.tex
\section{Lambert inversion, layer widths, and subcritical discrepancy}

Write $\alpha=s+\rho$, where $s=\lfloor\alpha\rfloor$ and
$0<\rho<1$, and put
\[
  C=b^\rho,\qquad L=\log C,
  \qquad
  \Phi(x)=\rho x-\log_bx,
  \qquad
  \Psi(x)=\rho x-\log_b(x+1).
\]
The integer part $sm$ does not affect the prefix event.  This makes it
possible to invert every integer value of $\Phi$ and $\Psi$ separately.
We use the standard notation and real-branch convention for the Lambert
$W$ function from~\cite{corlessetal1996}; in particular, $W_{-1}$ denotes
the large negative real branch on $[-\e^{-1},0)$.

\begin{theorem}[Exact inverse layers]\label{thm:lambert-layer}
Assume $c\ge2$ and $0<\rho<1$.  The indicator of a prefix hit satisfies
\begin{equation}\label{eq:layer-indicator}
  \ind_{\{m\in\Sset_{c,b}\}}
  =\lfloor\Phi(m)\rfloor-\lfloor\Psi(m)\rfloor.
\end{equation}
For all sufficiently large integers $j$, define
\begin{equation}\label{eq:xz-lambert}
  x_j=-\frac1L W_{-1}(-Lb^{-j}),
  \qquad
  z_j=-\frac1L W_{-1}\left(-\frac{L}{Cb^j}\right)-1.
\end{equation}
Then the integers belonging to layer $j$ are exactly
\[
  \mathbb Z\cap[x_j,z_j),
\]
and
\[
  0<z_j-x_j<1.
\]
Thus the unique possible candidate is $m_j=\lceil x_j\rceil$, and it is a
solution if and only if
\begin{equation}\label{eq:exact-lambert-hit}
  \boxed{\lceil x_j\rceil<z_j.}
\end{equation}
Moreover, with $d_+(x)=\lceil x\rceil-x$,
\begin{equation}\label{eq:width-first-order}
  z_j-x_j=\frac1{j\log b}
  +O_{c,b}\left(\frac{\log j}{j^2}\right),
\end{equation}
and $\Sset_{c,b}$ is infinite if and only if
$d_+(x_j)<z_j-x_j$ for infinitely many $j$.
\end{theorem}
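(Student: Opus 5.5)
We begin from \cref{prop:criterion}, which for $c\ge2$ reduces membership to $\{F_{c,b}(m)\}<\delta_b(m)$. Since $F_{c,b}(m)=sm+\Phi(m)$ with $sm\in\mathbb Z$, this is $\{\Phi(m)\}<\log_b(1+1/m)=\Phi(m)-\Psi(m)$. Because $0<\Phi(m)-\Psi(m)<1$, the difference $\lfloor\Phi(m)\rfloor-\lfloor\Psi(m)\rfloor$ lies in $\{0,1\}$. It equals $1$ exactly when an integer $j$ satisfies $\Psi(m)<j\le\Phi(m)$, which is the same as $\{\Phi(m)\}<\Phi(m)-\Psi(m)$. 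This proves \cref{eq:layer-indicator}. We then call $m$ a member of layer $j$ when $\Psi(m)<j\le\Phi(m)$.

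To locate the layers, note that $\Phi$ and $\Psi$ are strictly increasing on $x>1/L$, since $\Phi'(x)=\rho-1/(x\log b)$ and similarly for $\Psi$. For large $j$ we solve $\Phi(x)=j$. Writing $\rho x\log b=Lx$, the equation $\e^{Lx}=b^j x$ becomes $(-Lx)\e^{-Lx}=-Lb^{-j}$. The solution on the increasing branch, with $Lx>1$, is $x=-W_{-1}(-Lb^{-j})/L=x_j$. In the same way, $\Psi(x)=j$ with $y=x+1$ becomes $\e^{Ly}=Cb^jy$, whose large root is $y=-W_{-1}(-L/(Cb^j))/L$, so $x=z_j$. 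Monotonicity then gives $j\le\Phi(m)\iff m\ge x_j$ and $\Psi(m)<j\iff m<z_j$, provided $m$ lies beyond $1/L$. We must also check that small $m$ below the turning point $1/L$ cannot belong to a large layer. This holds because $\Phi$ is bounded on $[1,1/L]$, so we discard finitely many layers. It follows that the integers of layer $j$ are exactly $\mathbb Z\cap[x_j,z_j)$.

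For the width, set $u=x_j$ and $v=z_j+1$. Then $\Phi(u)=j$ and $\Phi(v)=\rho+j$, using $\Psi(z)=\Phi(z+1)+\rho\cdot(-1)+\rho$, more precisely $\Psi(x)=\Phi(x+1)-\rho$. Hence $z_j-x_j=v-u-1$, where by the mean value theorem $\rho=\Phi(v)-\Phi(u)=(v-u)\Phi'(\xi)$. Since $\Phi'(\xi)=\rho-1/(\xi\log b)$ with $\xi\asymp j/\rho$, we get
\[
v-u=\frac{1}{1-1/(\rho\xi\log b)}=1+\frac{1}{\rho\xi\log b}+O(\xi^{-2}).
\]
From $\Phi(x_j)=j$ we have $\rho x_j=j+\log_bx_j$, so $\rho\xi=j+O(\log j)$. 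This yields $z_j-x_j=1/(j\log b)+O(\log j/j^2)$, and it shows that $0<z_j-x_j<1$ for large $j$. The delicate point is making this expansion uniform; we will carry it out with explicit bounds $x_j\le\xi\le x_j+2$ together with the asymptotic $x_j=(j+O(\log j))/\rho$, which follows from the fixed-point form of $\Phi(x_j)=j$.

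Since the interval $[x_j,z_j)$ has length less than $1$, its only possible integer is $\lceil x_j\rceil$, and membership is exactly \cref{eq:exact-lambert-hit}, equivalently $d_+(x_j)<z_j-x_j$. Summing \cref{eq:layer-indicator} shows that every hit lies in exactly one layer, and distinct layers give distinct $m$. Therefore $\Sset_{c,b}$ is infinite if and only if infinitely many layers contain a hit, the finitely many excluded small layers being irrelevant. We expect the main obstacle to be the bookkeeping about which branch is valid and the threshold for ``sufficiently large $j$'', rather than the asymptotics themselves.
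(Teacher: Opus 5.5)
Your proposal is correct and follows essentially the paper's proof: Lambert inversion of $\Phi(x)=j$ and $\Psi(x)=j$, monotonicity beyond $1/L$, and the mean-value theorem for the width. The one variation is that you apply the mean-value theorem to $\Phi$ on $[x_j,z_j+1]$ via $\Psi(x)=\Phi(x+1)-\rho$, which amounts to the exact identity $z_j-x_j=1/(L\xi-1)$ with $\xi>x_j$; this gives $0<z_j-x_j<1$ and the first-order asymptotic together, whereas the paper checks $\Psi(x_j)<j<\Psi(x_j+1)$ separately. There is one harmless slip: $\Phi(m)-\Psi(m)=\delta_b(m)\le1$, with equality at $m=1$ when $b=2$, and the floor argument still goes through in that case.
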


\begin{proof}
Let $G(m)=m\alpha-\log_b(m+1)$.  The half-open prefix condition gives
\[
  \ind_{\{m\in\Sset_{c,b}\}}
  =\lfloor F_{c,b}(m)\rfloor-\lfloor G(m)\rfloor.
\]
Since $F_{c,b}(m)=sm+\Phi(m)$ and $G(m)=sm+\Psi(m)$,
\cref{eq:layer-indicator} follows.  Also
$0<\Phi(m)-\Psi(m)=\delta_b(m)\le1$, so a hit is equivalent to the
existence of a unique integer $j$ with
\[
  \Psi(m)<j\le\Phi(m).
\]

The equation $\Phi(x)=j$ is
$(-Lx)\e^{-Lx}=-Lb^{-j}$.  The large positive root uses the real branch
$W_{-1}$ and is $x_j$ in \cref{eq:xz-lambert}.  Similarly, after setting
$u=z+1$, the equation $\Psi(z)=j$ gives the displayed formula for $z_j$.
For all sufficiently large $j$, both roots are defined and lie in a range on
which $\Phi$ and $\Psi$ are strictly increasing.  Hence
$j\le\Phi(m)$ is equivalent to $m\ge x_j$, while $\Psi(m)<j$ is equivalent
to $m<z_j$.

Since $\Psi(x_j)<\Phi(x_j)=j$, one has $z_j>x_j$.  For sufficiently large
$x_j$,
\[
  \Psi(x_j+1)-\Phi(x_j)
  =\log_b\frac{Cx_j}{x_j+2}>0,
\]
because $(C-1)x_j>2$.  Thus $z_j<x_j+1$, proving uniqueness and
\cref{eq:exact-lambert-hit}.

Finally, the defining equations give
\[
  Lx_j-\log x_j=j\log b,
  \qquad
  Lz_j-\log(z_j+1)=j\log b.
\]
They first imply
$x_j,z_j=(j\log b)/L+O_{c,b}(\log j)$.  The mean-value theorem applied to
$\Phi$ gives, for some $\xi_j\in(x_j,z_j)$,
\[
  z_j-x_j
  =\frac{\log(1+1/z_j)}{L-1/\xi_j},
\]
which yields \cref{eq:width-first-order}.  Since the omitted initial layers
are finite, the last equivalence follows from
\cref{eq:exact-lambert-hit}.
\end{proof}

\begin{proposition}[All-orders layer width]\label{prop:all-orders-width}
Let $B=\log b$, $J=jB$, $Y_j=Lx_j$, and $w_j=z_j-x_j$.  Whenever the two
large real branches exist,
\begin{equation}\label{eq:exact-width-W}
  w_j=\frac{-W_{-1}(-L\e^{-J-L})+W_{-1}(-L\e^{-J})-L}{L}.
\end{equation}
For all sufficiently large $j$, the following series is convergent:
\begin{equation}\label{eq:lagrange-width}
  \boxed{
  w_j=\sum_{n\ge1}\frac1{nL}[z^{n-1}]
  \left(\frac{z(L+z)}{\e^z-1}\right)^nY_j^{-n}.}
\end{equation}
In particular,
\begin{align}\label{eq:width-y-expansion}
  w_j={}&Y_j^{-1}
  +\left(1-\frac L2\right)Y_j^{-2}
  +\left(1-\frac{3L}{2}+\frac{L^2}{3}\right)Y_j^{-3}\notag\\
  &+\left(1-3L+\frac{11L^2}{6}-\frac{L^3}{4}\right)Y_j^{-4}
  +O_L(Y_j^{-5}).
\end{align}
If $\tau_j=\log(J/L)$ and
\[
  Q_L(\tau)=\tau^2-(3-L)\tau+1-\frac{3L}{2}+\frac{L^2}{3},
\]
then
\begin{equation}\label{eq:width-j-expansion}
  w_j=\frac1{jB}
  -\frac{\tau_j-1+L/2}{j^2B^2}
  +\frac{Q_L(\tau_j)}{j^3B^3}
  +O_{c,b}\left(\frac{(\log j)^3}{j^4}\right).
\end{equation}
Every further finite order is obtained effectively from
\cref{eq:lagrange-width}.
\end{proposition}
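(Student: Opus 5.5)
Write $Y=Y_j=Lx_j$ and $U=L(z_j+1)$.  The plan is to first derive the exact formula \cref{eq:exact-width-W}, then invert one transcendental equation for $w_j$ by Lagrange inversion, and finally re-expand in $j$.

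\emph{Exact formula.}  By \cref{thm:lambert-layer}, $Lx_j=-W_{-1}(-L\e^{-J})$ and $L(z_j+1)=-W_{-1}(-L\e^{-J-L})$, since $Cb^j=\e^{L+J}$.  Subtracting and dividing by $L$ gives \cref{eq:exact-width-W} at once.

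\emph{Equation for the width.}  The defining relations are $Y-\log Y=J+\log L$ and $U-\log U=J+L+\log L$.  Subtracting them and writing $U=Y+L+Lw$ gives a single equation in one small unknown.  Set $z=U-Y=L(1+w)$, so that $z-\log(1+z/Y)=L$.  I will instead work with the variable $v=Lw$, which is small because $w_j\to0$.  The relation becomes $\log(1+(L+v)/Y)=v$, that is,
\[
  \e^{v}-1=\frac{L+v}{Y},
  \qquad\text{equivalently}\qquad
  \frac{v(\e^v-1)^{-1}(L+v)}{1}\cdot\frac1Y=v .
\]
So $v=\varphi(v)\,t$ with $t=1/Y$ and $\varphi(v)=v(L+v)/(\e^v-1)$.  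This $\varphi$ is analytic near $0$, has a removable singularity at $0$, and satisfies $\varphi(0)=L\ne0$.

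\emph{Lagrange inversion.}  Lagrange inversion gives $v=\sum_{n\ge1}\frac{t^n}{n}[z^{n-1}]\varphi(z)^n$.  Dividing by $L$ yields \cref{eq:lagrange-width}.  The series converges for $|t|$ small, by the standard analytic implicit-function argument with radius determined by $\varphi$ on a disk avoiding $2\pi i\mathbb Z\setminus\{0\}$.  Since $Y_j\to\infty$, it converges for all large $j$.  I must also check that the analytic root coincides with the actual real $v=Lw_j$.  This follows from uniqueness of the small root, together with the bound $0<w_j<1$ and $w_j\to0$ from \cref{thm:lambert-layer}.

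\emph{Coefficients.}  The first coefficients come from computing $[z^{n-1}]\varphi^n$ for $n\le4$.  For this I expand $z/(\e^z-1)=1-z/2+z^2/12-\cdots$ and multiply by powers of $(L+z)$.  For example, $n=1$ gives $L$, so $w=Y^{-1}+\cdots$.  For $n=2$, $[z]\bigl(L^2+2Lz-L^2z+\cdots\bigr)=2L-L^2$, and dividing by $2L$ gives $1-L/2$.  The remaining two coefficients are routine bookkeeping.

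\emph{Expansion in $j$.}  For \cref{eq:width-j-expansion}, I expand $Y_j$ in $J$ using $Y=J+\log L+\log Y$.  Iterating gives $Y=J+\log J+\log L\cdot 0+\cdots$; more precisely $Y=J+\log(J/L)\cdot0+\dots$, so care is needed with constants.  Indeed $Y-\log Y=J+\log L$ gives $Y=J+\log J+\log L+\frac{\log J+\log L}{J}+O((\log J)^2/J^2)$.  I will substitute this into the $Y$-expansion and collect powers of $1/J$, with $\tau_j=\log(J/L)$ expected to absorb the logarithms.  I expect this final re-expansion to be the main obstacle.  The difficulty is matching the constants exactly so that $Q_L$ emerges, so I would carry the $Y$-expansion to order $J^{-2}(\log J)^2$ and check the result against the leading terms.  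If the sign convention for $\log L$ does not match $\tau_j=\log(J/L)$, I would recheck the relation $Lx_j-\log x_j=J$ from the proof of \cref{thm:lambert-layer}.  That relation gives $Y-\log Y=J-\log L$, and with it the logarithms combine into $\log(J/L)$ as required.
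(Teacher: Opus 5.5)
Your proposal is correct and follows essentially the paper's own proof. The shared steps are these.
\begin{itemize}
\item You subtract the two defining relations to get $v_j=\log\bigl(1+t_j(L+v_j)\bigr)$.
\item You rewrite this in the fixed-point form $v=t\,v(L+v)/(\e^v-1)$ and apply Lagrange--B\"urmann inversion.
\item You identify the analytic branch with the actual width. You use implicit-function uniqueness together with $w_j\to0$, whereas the paper uses monotonicity of $\e^v-1-t(L+v)$ on $v\ge0$; either argument works.
\item You re-expand using $Y_j=J+\log x_j$.
\end{itemize}

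Fix the early sign slip: the correct relation is $Y-\log Y=J-\log L$, as you note yourself at the end. It gives $Y_j=J+\tau_j+\tau_j/J+O(\tau_j^2/J^2)$. Substituting this into the first three terms of the $Y$-expansion does produce exactly the coefficients $-(\tau_j-1+L/2)$ and $Q_L(\tau_j)$.
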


\begin{proof}
Set $y_j=z_j+1$.  Then
\[
  Lx_j-\log x_j=J,
  \qquad
  Ly_j-\log y_j=J+L,
\]
which gives \cref{eq:exact-width-W}.  Put
$v_j=Lw_j$ and $t_j=Y_j^{-1}$.  Subtracting the two equations yields the
exact implicit relation
\begin{equation}\label{eq:v-implicit}
  v_j=\log\bigl(1+t_j(L+v_j)\bigr).
\end{equation}
The analytic implicit-function theorem gives an analytic branch at
$(v,t)=(0,0)$.  This is the actual positive width branch: for small $t>0$,
$H_t(v)=\e^v-1-t(L+v)$ is strictly increasing on $v\ge0$, is negative at
zero, and tends to infinity.

More explicitly, the implicit-function theorem supplies a number
$t_0=t_0(L)>0$ for which this branch, and hence its Taylor series, is
convergent throughout $|t|<t_0$.  Since $t_j=Y_j^{-1}\to0$, every
sufficiently large layer lies in that common disk of convergence.

Exponentiating \cref{eq:v-implicit} gives
\[
  v=t\,\frac{v(L+v)}{\e^v-1}.
\]
Lagrange--B\"urmann inversion proves \cref{eq:lagrange-width}; after
truncation at order $R$, analyticity supplies a remainder
$O_{L,R}(Y_j^{-R-1})$.  Expanding
the first four coefficients gives \cref{eq:width-y-expansion}.  Finally,
$Y_j=J+\log x_j$ and
\[
  \log x_j=\tau_j+\frac{\tau_j}{J}
  +O_L\left(\frac{\tau_j^2}{J^2}\right).
\]
Substitution into \cref{eq:width-y-expansion} yields
\cref{eq:width-j-expansion}.
\end{proof}

\subsection{Complete monotonicity and the two-gap law for candidates}

The all-orders expansion above has a global counterpart: the layer width
has no oscillatory derivatives.  This also forces a rigid two-gap structure
on the integer candidates, although it does not decide which candidates are
actual prefix hits.  Complete monotonicity and integral representations for
functions related to Lambert $W$ have an established theory
\cite{kaluginetal2012}; the proposition below is the tailored statement for
the difference of the two large inverse roots defining one prefix layer.

\begin{proposition}[Complete monotonicity of the layer width]
\label{prop:width-complete-monotonicity}
Let $B=\log b$, and let $G$ be the large-branch inverse of
\[
  u\longmapsto Lu-\log u
\]
on $(1/L,\infty)$.  For $J>1+\log L$, put
\[
  R(J)=\frac1{LG(J)-1},
  \qquad
  W(J)=G(J+L)-G(J)-1.
\]
Then
\begin{equation}\label{eq:width-integral-representation}
  \boxed{W(J)=\frac1L\int_J^{J+L}R(u)\,du},
  \qquad
  R'(J)=-R(J)^2(1+R(J)).
\end{equation}
In particular,
\begin{equation}\label{eq:width-complete-monotonicity}
  \boxed{(-1)^nW^{(n)}(J)>0\qquad(n\ge0).}
\end{equation}
For every layer for which $x_j=G(jB)$ is on this branch, $w_j=z_j-x_j$
equals $W(jB)$ and satisfies the strict nonasymptotic bounds
\begin{equation}\label{eq:width-nonasymptotic-bounds}
  \boxed{
  \frac1{L(z_j+1)-1}<w_j<\frac1{Lx_j-1}.}
\end{equation}
Moreover,
\begin{equation}\label{eq:width-one-exact-threshold}
  \boxed{w_j<1\quad\Longleftrightarrow\quad
  x_j>\frac2{\e^L-1}=\frac2{C-1}.}
\end{equation}
If $\Delta a_j=a_{j+1}-a_j$, then throughout the discrete large-branch
range
\begin{equation}\label{eq:width-discrete-complete-monotonicity}
  (-1)^n\Delta^nw_j>0\qquad(n\ge0),
\end{equation}
and, for each fixed $n$,
\begin{equation}\label{eq:width-difference-asymptotic}
  (-1)^n\Delta^nw_j
  =\frac{n!}{B j^{n+1}}
  +O_{c,b,n}\left(\frac{\log j}{j^{n+2}}\right).
\end{equation}
Thus $(w_j)$ is strictly decreasing and strictly convex.
\end{proposition}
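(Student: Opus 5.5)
The plan is to work entirely with the scalar function $G$ and derive everything from the ODE for $R$. Differentiating $LG-\log G=J$ gives
\[
  G'(J)=\frac{G}{LG-1}.
\]
Set $R=1/(LG-1)$, so that $LG=1+1/R$. Then
\[
  LG'=\frac{LG}{LG-1}=1+R,
\]
and hence
\[
  \frac{d}{dJ}\bigl(G(J+L)-G(J)\bigr)=\frac1L\bigl(R(J+L)-R(J)\bigr).
\]
Integrating is cleaner in the other direction: $G(J+L)-G(J)=\int_J^{J+L}G'(u)\,du=\frac1L\int_J^{J+L}(1+R(u))\,du=1+\frac1L\int_J^{J+L}R$. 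This gives the integral representation for $W$. For the ODE,
\[
  R'=-\frac{LG'}{(LG-1)^2}=-R^2(1+R).
\]
The range condition $J>1+\log L$ is exactly $G>1/L$ on the large branch, since $Lu-\log u$ has its minimum $1+\log L$ at $u=1/L$. So $R>0$ there.

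Next comes complete monotonicity. Because $R'=P(R)$ with $P(r)=-r^2-r^3$, induction gives $R^{(n)}=P_n(R)$, where $P_{n+1}=P_n'\,P$. I will prove that $(-1)^nP_n$ is a polynomial with nonnegative coefficients, with lowest-degree term $n!\,r^{n+1}$, and positive on $r>0$. Since $-P$ has nonnegative coefficients, $(-1)^{n+1}P_{n+1}=((-1)^nP_n)'\cdot(-P)$ preserves this. So $(-1)^nR^{(n)}>0$. Then $W^{(n)}(J)=\frac1L\int_J^{J+L}R^{(n)}$ inherits the strict sign.

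The discrete statement follows the same way. Write $\Delta^n w_j$ as an $n$-fold integral of $W^{(n)}$ over $[0,B]^n$ shifts, i.e.\ $\Delta^nW(J)=\int_{[0,B]^n}W^{(n)}(J+t_1+\dots+t_n)\,dt$; the sign is immediate. For the asymptotic \cref{eq:width-difference-asymptotic}, use $(-1)^nW^{(n)}=\frac1L\int n!R^{n+1}(1+O(R))$ together with $R=1/(J+O(\log J))$, since $LG=J+\log G$. This gives $(-1)^nW^{(n)}(J)=n!J^{-n-1}+O(\log J\,J^{-n-2})$. The $n$-fold averaging over $[0,B]^n$ then multiplies by $B^n$, and with $J=jB$ this yields $n!/(Bj^{n+1})$.

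The identification $w_j=W(jB)$ is direct from \cref{eq:xz-lambert}. Indeed, $x_j=G(jB)$, and $z_j+1$ solves $Lu-\log u=jB+L$. That root is $G(jB+L)$, provided it lies on the large branch, which holds because $jB+L>jB$ and $G$ is increasing.

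The strict bounds \cref{eq:width-nonasymptotic-bounds} come from the integral representation, using that $R$ is strictly decreasing. The integral lies strictly between $R(J+L)$ and $R(J)$, which are $1/(L(z_j+1)-1)$ and $1/(Lx_j-1)$.

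The threshold \cref{eq:width-one-exact-threshold} is the step needing the most care. My approach is to note that $w_j<1$ iff $G(J+L)<G(J)+2$. Since $u\mapsto Lu-\log u$ is increasing on the branch, this is iff
\[
  J+L<L(x+2)-\log(x+2),\qquad x=G(J).
\]
Substituting $J=Lx-\log x$, this becomes $\log\frac{x+2}{x}<L$, i.e.\ $x>2/(\e^L-1)$. Here I must check that $x+2$ is on the branch, which holds because $x>1/L$. The equivalence is exact, which matches the boxed claim. I expect the main bookkeeping obstacle to be the polynomial induction and keeping the branch/domain conditions consistent throughout; all other steps are short computations.
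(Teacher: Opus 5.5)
Your proposal is correct and follows essentially the paper's own route: integrating $LG'=1+R$ over $[J,J+L]$, the sign-tracked polynomial recursion $R^{(n)}=P_n(R)$ with nonnegative coefficients and lowest term $n!\,r^{n+1}$, the strict integral-average bounds, the $n$-fold integral formula for $\Delta^n$, and the exact comparison $\log(1+2/x_j)<L$ for the threshold. One cosmetic point: $z_j+1=G(jB+L)$ holds because the $W_{-1}$ branch takes values $\le -1$, which places $z_j+1\ge 1/L$ on the large branch; the monotonicity of $G$ is not what puts it there.
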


\begin{proof}
Implicit differentiation of $LG(J)-\log G(J)=J$ gives
\[
  G'(J)=\frac{G(J)}{LG(J)-1}=\frac1L(1+R(J)).
\]
Integrating $G'$ over $[J,J+L]$ proves the first identity in
\cref{eq:width-integral-representation}; differentiating $R$ proves the
second one.  Define recursively
\[
  P_0(t)=t,
  \qquad
  P_{n+1}(t)=t^2(1+t)P_n'(t).
\]
Induction gives $(-1)^nR^{(n)}(J)=P_n(R(J))$.  Every $P_n$ is a nonzero
polynomial with nonnegative coefficients, so every one of these quantities
is positive.  Differentiating the integral representation proves
\cref{eq:width-complete-monotonicity}.

Since $R$ is strictly decreasing, its integral average lies strictly
between its endpoint values; this is
\cref{eq:width-nonasymptotic-bounds}.  Next, $w_j<1$ is equivalent to
$G(jB+L)<x_j+2$.  The large branch is increasing, and
\[
  \bigl(L(x_j+2)-\log(x_j+2)\bigr)
  -\bigl(Lx_j-\log x_j\bigr)
  =2L-\log\left(1+\frac2{x_j}\right).
\]
Comparison with $L$ proves \cref{eq:width-one-exact-threshold}, including
the equality case.

Repeated use of the fundamental theorem of calculus gives
\[
  \Delta^nf(j)=
  \int_{[0,1]^n}f^{(n)}(j+t_1+\cdots+t_n)
  \,dt_1\cdots dt_n.
\]
Applied to $f(j)=W(jB)$, after the harmless positive scaling by $B$, this
proves \cref{eq:width-discrete-complete-monotonicity}.  Finally,
$R(J)=J^{-1}+O_L((\log J)/J^2)$, while the recursion gives
$P_n(t)=n!t^{n+1}+O_n(t^{n+2})$.  Substitution in the integral formula
proves \cref{eq:width-difference-asymptotic}.
\end{proof}

\begin{lemma}[Ceiling increments]
\label{lem:ceiling-increments}
Let $x<y$ be real numbers and let $k\in\mathbb Z$.  If
\begin{equation}\label{eq:ceiling-increment-hypothesis}
  k<y-x<k+1,
\end{equation}
then
\begin{equation}\label{eq:ceiling-increment-conclusion}
  \lceil y\rceil-\lceil x\rceil\in\{k,k+1\}.
\end{equation}
This includes without exception the cases in which $x$ or $y$ is an
integer.
\end{lemma}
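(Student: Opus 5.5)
The plan is to reduce the ceiling to the floor via $\lceil t\rceil=-\lfloor -t\rfloor$, or, equivalently, to use the characterization $\lceil t\rceil=n$ if and only if $n-1<t\le n$ and compare the two integers $\lceil x\rceil$ and $\lceil y\rceil$ directly. Write $X=\lceil x\rceil$ and $Y=\lceil y\rceil$, so that
\[
  X-1<x\le X,\qquad Y-1<y\le Y .
\]
Subtracting these pairs of inequalities gives the two-sided bound
\[
  (Y-1)-X<y-x<Y-(X-1),
\]
that is, $Y-X-1<y-x<Y-X+1$. Both inequalities are strict, because each comes from combining one strict and one non-strict inequality. This is exactly what allows integer $x$ or $y$ without exception.

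Next we combine this with the hypothesis \cref{eq:ceiling-increment-hypothesis}. The integer $D=Y-X$ satisfies $D-1<y-x<D+1$, and $y-x\in(k,k+1)$. From $D-1<y-x<k+1$ we get $D<k+2$, hence $D\le k+1$. From $D+1>y-x>k$ we get $D>k-1$, hence $D\ge k$. Therefore $D\in\{k,k+1\}$, which is \cref{eq:ceiling-increment-conclusion}.

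There is no real obstacle. The only point needing care is the endpoint case in which $x$ or $y$ is an integer, and the strictness bookkeeping handles it: the half-open characterization $n-1<t\le n$ remains valid at integers, and the strict hypothesis $k<y-x<k+1$ ensures the derived integer inequalities are strict before rounding. I would add a one-line remark that if either hypothesis inequality were non-strict, the conclusion could fail, for example $x=0$, $y=k+1$ gives $D=k+1$ but $x=0.5$, $y=k+1.5$ with non-strict bounds could give other values. This explains why the statement emphasizes that integer endpoints are included.
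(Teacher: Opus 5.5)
Your argument is correct, but it is organized differently from the paper's. The paper rewrites the hypothesis as $x+k<y<x+k+1$. It then sandwiches $\lceil y\rceil$ directly, using monotonicity of the ceiling and $\lceil x+r\rceil=\lceil x\rceil+r$ for $r\in\mathbb Z$, to get $\lceil x\rceil+k\le\lceil y\rceil\le\lceil x\rceil+k+1$. You instead use the half-open characterization $n-1<t\le n$ to prove the general estimate $|(\lceil y\rceil-\lceil x\rceil)-(y-x)|<1$, and then intersect it with the hypothesis. The paper's version is shorter. Yours isolates a reusable fact and shows exactly why integer values of $x$ or $y$ cause no trouble.

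You should drop the closing remark you propose to add, because it is false. Neither proof needs the strictness of $k<y-x<k+1$. Your own bounds $D-1<y-x<D+1$ are already strict, and that strictness comes from the ceiling characterization, not from the hypothesis. So $k\le y-x\le k+1$ still forces $D<k+2$ and $D>k-1$, which gives $D\in\{k,k+1\}$. The paper's sandwich also works verbatim with non-strict inequalities. Your example does not exhibit a failure either: for $x=0.5$ and $y=k+1.5$ one gets $\lceil y\rceil-\lceil x\rceil=(k+2)-1=k+1$. The words ``without exception'' in the statement refer to integer values of $x$ or $y$, not to any need for a strict hypothesis.
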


\begin{proof}
The strict inequalities in
\cref{eq:ceiling-increment-hypothesis} give
\[
  x+k<y<x+k+1.
\]
Monotonicity of the ceiling function, together with
$\lceil x+r\rceil=\lceil x\rceil+r$ for $r\in\mathbb Z$, therefore yields
\[
  \lceil x\rceil+k
  =\lceil x+k\rceil
  \le\lceil y\rceil
  \le\lceil x+k+1\rceil
  =\lceil x\rceil+k+1.
\]
The middle difference is an integer, which proves the claim.  The argument
does not assume that either endpoint is nonintegral.
\end{proof}

The next result is reminiscent of classical gap laws for rotations and
integer-part sequences~\cite{slater1967,fraenkelholzman1995}.  Its input,
however, is not a pure rotation or an exact Beatty sequence: Lambert
inversion introduces a deterministic logarithmic perturbation, which is
responsible for the drift in \cref{eq:large-candidate-gap-count} below.

\begin{theorem}[Two-gap law and exact candidate count]
\label{thm:lambert-candidate-two-gap}
Let
\[
  a=\frac BL=\frac1\rho,
  \qquad k=\lfloor a\rfloor,
  \qquad \vartheta=\{a\},
\]
and set $m_j=\lceil x_j\rceil$.  The real increments
$d_j=x_{j+1}-x_j$ satisfy
\begin{equation}\label{eq:candidate-real-gap}
  d_j=a+\frac1L\int_{jB}^{jB+B}R(u)\,du,
\end{equation}
  so they are strictly decreasing and tend to $a$ from above.  If
  $h=k+1$ and $\delta=h-a$, then the exact equivalence
\begin{equation}\label{eq:candidate-two-gap-threshold}
  \boxed{d_j<h\quad\Longleftrightarrow\quad
  x_j>\frac{h}{\e^{L\delta}-1}}
\end{equation}
  holds.  Consequently, from some layer $j_0$ onward,
\begin{equation}\label{eq:candidate-two-gap}
  \boxed{m_{j+1}-m_j\in\{k,k+1\}.}
\end{equation}

For $N>j_0$, let
\[
  G_+(N)=\#\{j_0\le j<N:m_{j+1}-m_j=k+1\}.
\]
Then
\begin{align}
  G_+(N)
  &=m_N-m_{j_0}-k(N-j_0),
    \label{eq:large-candidate-gap-exact}\\
  &=\vartheta N+\frac{\log N}{L}+O_{c,b}(1).
    \label{eq:large-candidate-gap-count}
\end{align}
More uniformly, for all sufficiently large $s>r$,
\begin{equation}\label{eq:large-candidate-gap-interval}
  \#\{r\le j<s:m_{j+1}-m_j=k+1\}
  =\vartheta(s-r)+\frac1L\log\frac sr
  +d_+(x_s)-d_+(x_r)
  +O_{c,b}\left(\frac{\log r}{r}\right).
\end{equation}
In particular, after subtracting its linear term and logarithmic drift, the
interval count has absolute discrepancy less than
$1+O_{c,b}((\log r)/r)$, uniformly in $s>r$.  Thus the gap word is
asymptotically one-balanced after the deterministic logarithmic correction.
In particular, even when $a$ is an integer, the longer gap occurs
infinitely often, but only $L^{-1}\log N+O(1)$ times up to layer $N$.

Finally, if $j_0$ is chosen so that the large branch is increasing and the
one-candidate conclusion holds for every $j\ge j_0$, then for every integer
$M>1/L$,
\begin{equation}\label{eq:exact-lambert-candidate-count}
  \boxed{
  \#\{j\ge j_0:m_j\le M\}
  =\max\left\{0,
  \left\lfloor\rho M-\log_bM\right\rfloor-j_0+1\right\}.}
\end{equation}
Consequently, for every sufficiently large integer $M$, the candidate set
has the exact perturbed-Beatty indicator
\begin{equation}\label{eq:lambert-candidate-indicator}
  \boxed{
  \ind_{\{M=m_j\text{ for some }j\}}
  =\lfloor\Phi(M)\rfloor-\lfloor\Phi(M-1)\rfloor.}
\end{equation}
\end{theorem}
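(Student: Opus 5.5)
The plan is to work entirely with the large-branch inverse $G$ of \cref{prop:width-complete-monotonicity}, so that $x_j=G(jB)$, together with the identity $G'(J)=L^{-1}(1+R(J))$ proved there.

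\textbf{Real increments and the threshold.} Integrating $G'$ over $[jB,jB+B]$ gives
\[
  d_j=\frac BL+\frac1L\int_{jB}^{jB+B}R(u)\,du,
\]
which is \cref{eq:candidate-real-gap}. Since $R$ is positive and strictly decreasing with $R(J)\to0$, the increments $d_j$ decrease strictly to $a$ from above.

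For the threshold \cref{eq:candidate-two-gap-threshold}, I use that $G$ is increasing. Thus $d_j<h$ is equivalent to $jB+B<\Phi_L(x_j+h)$, where $\Phi_L(u)=Lu-\log u$. Subtracting $\Phi_L(x_j)=jB$, this becomes
\[
  B<Lh-\log\bigl(1+h/x_j\bigr),
\]
that is, $\log(1+h/x_j)<L\delta$, which rearranges to the stated inequality. This is the same computation used for \cref{eq:width-one-exact-threshold}.

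\textbf{Two-gap law.} If $a\notin\mathbb Z$, eventually $k<a<d_j<k+1$, and \cref{lem:ceiling-increments} gives \cref{eq:candidate-two-gap}. If $a=k$ is an integer, we have $d_j>k$ always and $d_j<k+1$ eventually, so the lemma applies again.

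\textbf{Counting long gaps.} The exact identity \cref{eq:large-candidate-gap-exact} is a telescoping sum: each gap equals $k$ plus the indicator of a long gap. Next write $m_j=x_j+d_+(x_j)$. The asymptotic expansion of $G$, namely
\[
  G(J)=\frac JL+\frac{\log(J/L)}{L}+O\!\left(\frac{\log J}{J}\right),
\]
follows from $LG-\log G=J$ by one bootstrapping step. It gives
\[
  x_s-x_r=a(s-r)+\frac1L\log\frac sr+O\!\left(\frac{\log r}{r}\right).
\]
Subtracting $k(s-r)$ yields \cref{eq:large-candidate-gap-interval}. Because $d_+\in[0,1)$, the discrepancy bound $<1+O((\log r)/r)$ follows, and taking $r=j_0$ fixed gives \cref{eq:large-candidate-gap-count}. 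When $\vartheta=0$, only the logarithmic drift survives, so the longer gap occurs infinitely often but logarithmically rarely.

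\textbf{Exact candidate count.} For $M>1/L$, the function $\Phi$ is increasing on $[M,\infty)$ and beyond. Since $M$ is an integer, $m_j\le M$ is equivalent to $x_j\le M$, which is equivalent to $j\le\Phi(M)$ (monotonicity of the large branch). Hence the $j\ge j_0$ counted are exactly $j_0\le j\le\lfloor\Phi(M)\rfloor$, giving \cref{eq:exact-lambert-candidate-count}.

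Since the layers have $x_j$ strictly increasing with gaps $d_j>a>1$, the $m_j$ are distinct. Therefore, for large $M$, the indicator that $M$ is some $m_j$ equals the count at $M$ minus the count at $M-1$, which is \cref{eq:lambert-candidate-indicator}. The caveat "sufficiently large" ensures $M-1>1/L$ and that the max is positive.

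\textbf{Main obstacle.} The step I expect to need the most care is the uniform error term in \cref{eq:large-candidate-gap-interval}. It must be $O((\log r)/r)$ uniformly in $s$, not merely $O(1)$. I will get it by writing
\[
  x_s-x_r=\frac1L\int_{rB}^{sB}\bigl(1+R(u)\bigr)\,du
\]
and using $R(u)=u^{-1}+O((\log u)/u^2)$. The integrated error $\int_{rB}^\infty(\log u)/u^2\,du$ is $O((\log r)/r)$, independent of $s$.
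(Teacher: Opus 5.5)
Your proposal is correct and follows essentially the same route as the paper: you integrate $G'$ to get the increments, transfer $d_j<h$ through the increasing large branch to obtain the threshold, apply the ceiling-increment lemma, telescope to get $G_+$, and use monotonicity of $\Phi$ on $(1/L,\infty)$ for the exact count and the indicator. Your treatment of the uniform $O((\log r)/r)$ error is slightly more explicit than the paper's, which records only $x_j=aj+L^{-1}\log j+O(1)$ before subtracting; you instead write $x_s-x_r=L^{-1}\int_{rB}^{sB}(1+R(u))\,du$ and use $R(u)=u^{-1}+O((\log u)/u^2)$.
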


\begin{proof}
Integrating the formula for $G'$ over $[jB,(j+1)B]$ proves
\cref{eq:candidate-real-gap}.  Complete monotonicity of $R$ shows that
$d_j\downarrow a$.  Since the large branch is increasing, $d_j<h$ is
equivalent to
\[
  L(x_j+h)-\log(x_j+h)>(j+1)B.
\]
After subtracting $Lx_j-\log x_j=jB$, this becomes
\[
  Lh-\log\left(1+\frac h{x_j}\right)>B,
\]
  which is exactly \cref{eq:candidate-two-gap-threshold}.  Since
  $d_j>a\ge k$, the threshold eventually gives
  $k<d_j<k+1$.  Applying \cref{lem:ceiling-increments} to
  $(x_j,x_{j+1})$ proves \cref{eq:candidate-two-gap}.  This also covers
  $a\in\mathbb Z$, integral values of $x_j$, and every strict-threshold
  boundary case.  If $b$ and $c$ are supplied by effective real data that
  decide the displayed inequalities, the explicit threshold in
  \cref{eq:candidate-two-gap-threshold} also provides an effective search
  for a valid $j_0$; no such effectiveness assertion is needed for an
  arbitrary abstract real parameter $c$.

On this range, the indicator of the longer gap is
$m_{j+1}-m_j-k$, so summation telescopes and gives
\cref{eq:large-candidate-gap-exact}.  Inversion of
$Lx_j-\log x_j=jB$ gives
\[
  x_j=aj+\frac{\log j}{L}+O_{c,b}(1),
  \qquad m_j=x_j+O(1).
\]
This proves \cref{eq:large-candidate-gap-count}; subtracting the analogous
formula at two endpoints, and retaining
$m_j-x_j=d_+(x_j)$, proves \cref{eq:large-candidate-gap-interval}.  The
remainder is uniform because $(\log s)/s\ll(\log r)/r$ for sufficiently
large $s>r$.

For the final assertion, monotonicity of
$\Phi(x)=\rho x-\log_bx$ on the large branch gives, for integer $M$,
\[
  m_j\le M
  \quad\Longleftrightarrow\quad
  x_j\le M
  \quad\Longleftrightarrow\quad
  j\le\rho M-\log_bM.
\]
Counting the eligible integers $j\ge j_0$ proves
\cref{eq:exact-lambert-candidate-count}.  For a real upper bound one must
first replace it by its integer part.  Taking the difference of the exact
counts at $M$ and $M-1$ proves \cref{eq:lambert-candidate-indicator}; the
difference is $0$ or $1$ because $\Phi$ is increasing with increment less
than $1$ in this range.
\end{proof}

\begin{corollary}[The $3$--$4$ candidate law for $(2,10)$]
\label{cor:two-candidate-two-gap}
For $(c,b)=(2,10)$ one has
\[
  m_1=6,
  \qquad m_2=10,
  \qquad m_{j+1}-m_j\in\{3,4\}\quad(j\ge1).
\]
If
\[
  G_4(N)=\#\{1\le j<N:m_{j+1}-m_j=4\},
\]
then
\begin{equation}\label{eq:two-candidate-four-count}
  \boxed{G_4(N)=m_N-3N-3
  =(\log_2 10-3)N+\log_2N+O(1).}
\end{equation}
For every integer $M\ge2$ the total number of Lambert candidates not
exceeding $M$ is exactly
\begin{equation}\label{eq:two-exact-candidate-count}
  \boxed{
  \#\{j\ge1:m_j\le M\}
  =\left\lfloor M\log_{10}2-\log_{10}M\right\rfloor.}
\end{equation}
These statements count possible candidates, not actual members of
$\Sset_{2,10}$.
\end{corollary}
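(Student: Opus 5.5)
Specialize \cref{thm:lambert-candidate-two-gap} to $(c,b)=(2,10)$ and push every ``sufficiently large'' threshold down to $j_0=1$ by explicit checks. Here $\alpha=\log_{10}2$, so $s=0$, $\rho=\log_{10}2\approx0.30103$, $C=2$, $L=\log 2$, $a=1/\rho=\log_2 10\approx3.3219$, $k=3$, $h=4$, $\delta=4-\log_2 10$.

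\textbf{Step 1 (validity of the large branch from $j=1$).} Verify that $W_{-1}(-Lb^{-j})$ and $W_{-1}(-L/(Cb^j))$ are defined for all $j\ge1$, i.e.\ $L/10\le\e^{-1}$, which is clear. Check that the roots lie beyond $1/L\approx1.44$, where $\Phi$ and $\Psi$ increase. Check the one-candidate condition $w_j<1$ through \cref{eq:width-one-exact-threshold}, namely $x_j>2/(C-1)=2$. Solving $\Phi(x)=1$ numerically gives $x_1\approx5.4$, so $m_1=6$. From $\Phi(x)=2$ one gets $x_2\approx9.97$, so $m_2=10$; since $\Phi(10)=3.0103-1>2$ and $\Phi(9)\approx1.755<2$, this is consistent. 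As $x_j$ increases in $j$, all later layers also satisfy these conditions.

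\textbf{Step 2 (the gap law from $j=1$).} Use \cref{eq:candidate-two-gap-threshold}: $d_j<4$ iff $x_j>4/(2^{\delta}-1)$. Here $\delta\approx0.678$, so $2^\delta\approx1.600$ and the threshold is $\approx6.67$. Then $x_1\approx5.4$ fails the threshold, but $x_2\approx9.97$ already passes it. For $j=1$, compute directly: $m_2-m_1=4\in\{3,4\}$. For $j\ge2$, we have $3<a<d_j<4$ (the lower bound from \cref{eq:candidate-real-gap}), so \cref{lem:ceiling-increments} gives $m_{j+1}-m_j\in\{3,4\}$. This threshold bookkeeping is the main obstacle: the generic threshold fails exactly at $j=1$, and the corollary must be rescued by the explicit small values $m_1,m_2$. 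I would certify $x_1,x_2$ by sign checks of $\Phi$ at rational points rather than by floating evaluation of $W_{-1}$.

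\textbf{Step 3 (counts).} Telescoping as in \cref{eq:large-candidate-gap-exact} with $j_0=1$ gives $G_4(N)=m_N-m_1-3(N-1)=m_N-3N-3$. The asymptotic form follows from $x_N=aN+\log N/L+O(1)$ with $1/L=\log_2 e$. Note that the stated term $\log_2 N$ differs from $\log N/L=\log_2 N$ only in notation, so it agrees.

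For the exact count, apply \cref{eq:exact-lambert-candidate-count} with $j_0=1$. The condition $M>1/L\approx1.44$ means $M\ge2$. The formula gives $\max\{0,\lfloor\rho M-\log_{10}M\rfloor\}$. It remains to drop the $\max$: I would check that $\rho M-\log_{10}M\ge\rho\cdot2-\log_{10}2=\rho>0$ for $M\ge2$, since the function increases there, so the floor is nonnegative. I would also confirm that the monotone equivalence $x_j\le M\iff j\le\Phi(M)$ holds for integers $M\ge2$: both $x_j$ and $M$ lie in the increasing range $[1/L,\infty)$.
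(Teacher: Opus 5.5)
Your proposal is correct and follows essentially the same route as the paper. Both specialize \cref{thm:lambert-candidate-two-gap} with $j_0=1$ and certify $5<x_1<6$ and $9<x_2<10$ by sign checks of $\Phi$ at integers; the paper writes these as $2^5/5<10<2^6/6$ and $2^9/9<100<2^{10}/10$. Both then use the exact threshold $20/3$ for $j\ge2$, the direct value $m_2-m_1=4$ for $j=1$, telescoping, and the exact candidate count. One harmless numerical slip: $x_1\approx5.87$, not $5.4$ (indeed $\Phi(5.8)<1<\Phi(5.9)$), but only $5<x_1<6$ is ever used.
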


\begin{proof}
The exact comparisons
\[
  \frac{2^5}{5}<10<\frac{2^6}{6},
  \qquad
  \frac{2^9}{9}<100<\frac{2^{10}}{10}
\]
give $5<x_1<6$ and $9<x_2<10$.  Thus $m_1=6$ and $m_2=10$.
Here $a=\log_2 10\in(3,4)$, and the threshold in
\cref{eq:candidate-two-gap-threshold} for $h=4$ equals
\[
  \frac4{\exp((\log2)(4-\log_2 10))-1}
  =\frac4{8/5-1}=\frac{20}{3}.
\]
Since $x_2>9$, all later real gaps lie in $(\log_2 10,4)$; together with
$m_2-m_1=4$, this proves the two-gap assertion.  Telescoping gives the
first identity in \cref{eq:two-candidate-four-count}, and
\cref{eq:large-candidate-gap-count} gives its asymptotic form.
Finally, the large branch and the one-candidate property already hold from
$j=1$; \cref{eq:exact-lambert-candidate-count} with $j_0=1$ proves
\cref{eq:two-exact-candidate-count}.
\end{proof}

\begin{corollary}[A second-order one-sided critical criterion]
\label{cor:second-order-critical-criterion}
Retain the notation of \cref{thm:lambert-layer,prop:all-orders-width}, and
define
\[
  \Theta_j
  =j^2\left(d_+(x_j)-\frac1{jB}\right)
  +\frac{\log j+\log(1/\rho)-1+L/2}{B^2}.
\]
Then
\[
  \liminf_{j\to\infty}\Theta_j<0
  \quad\Longrightarrow\quad
  \Sset_{c,b}\text{ is infinite},
\]
whereas
\[
  \liminf_{j\to\infty}\Theta_j>0
  \quad\Longrightarrow\quad
  \Sset_{c,b}\text{ is finite}.
\]
The equality case is not decided at second order.  Successive explicit
terms in \cref{eq:width-j-expansion} give higher-order sufficient criteria
whenever the first nonvanishing normalized remainder is separated from
zero; they do not by themselves decide a fully degenerate boundary case.
\end{corollary}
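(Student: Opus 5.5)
The plan is to combine the exact hit criterion \cref{eq:exact-lambert-hit} with the second-order width expansion \cref{eq:width-j-expansion}. By \cref{thm:lambert-layer}, for all sufficiently large $j$, layer $j$ contains a hit if and only if $d_+(x_j)<w_j$. Only finitely many layers are omitted, so $\Sset_{c,b}$ is infinite exactly when $d_+(x_j)<w_j$ holds for infinitely many $j$. I will multiply this inequality by $j^2$ and compare it with $\Theta_j$.

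The first step is to rewrite the second-order term of \cref{eq:width-j-expansion}. There $\tau_j=\log(jB/L)$, and since $L=\rho B$ we have $\tau_j=\log j+\log(1/\rho)$. Hence
\[
  j^2\left(w_j-\frac1{jB}\right)
  =-\frac{\log j+\log(1/\rho)-1+L/2}{B^2}+O_{c,b}\left(\frac{(\log j)^2}{j}\right).
\]
Here the third-order term $Q_L(\tau_j)/(j^3B^3)$, multiplied by $j^2$, is $O((\log j)^2/j)$, and the $O((\log j)^3/j^4)$ remainder contributes even less. Subtracting this identity from the definition of $\Theta_j$ gives the exact relation
\[
  j^2\bigl(d_+(x_j)-w_j\bigr)=\Theta_j+\epsilon_j,
  \qquad \epsilon_j=O_{c,b}\left(\frac{(\log j)^2}{j}\right)\to0 .
\]

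The second step is the case split.

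\emph{Case $\liminf\Theta_j<0$.} Choose $\eta>0$ and infinitely many $j$ with $\Theta_j<-2\eta$. For large such $j$ we have $|\epsilon_j|<\eta$, so $d_+(x_j)-w_j<0$. Each such $j$ gives the hit $m_j=\lceil x_j\rceil$, and these hits are distinct because distinct layers contain distinct integers. Hence $\Sset_{c,b}$ is infinite.

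\emph{Case $\liminf\Theta_j>0$.} Here $\Theta_j>2\eta>0$ for all large $j$, so $d_+(x_j)\ge w_j$ eventually. Then no large layer contains a hit. Every sufficiently large integer hit lies in some layer $j$ with $j$ large, because $j\approx\rho m$. So only finitely many hits exist.

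The one point needing care is the uniformity of the error term. \Cref{prop:all-orders-width} gives an explicit $O_{c,b}$ remainder for all large $j$, so $\epsilon_j\to0$ along every subsequence and the argument is sound. The equality case $\liminf\Theta_j=0$ is not decided, because $\epsilon_j$ has no definite sign. The remark on higher-order criteria follows by repeating the same argument: one subtracts further explicit terms of the Lagrange expansion \cref{eq:lagrange-width} and normalizes by $j^{r}$ instead of $j^2$.
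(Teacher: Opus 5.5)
Your proposal is correct and follows the paper's own proof. You rewrite $\tau_j=\log j+\log(1/\rho)$, use \cref{eq:width-j-expansion} to obtain $j^2\bigl(d_+(x_j)-(z_j-x_j)\bigr)=\Theta_j+O_{c,b}\bigl((\log j)^2/j\bigr)$, and then read off both implications from the exact layer criterion $d_+(x_j)<z_j-x_j$ of \cref{thm:lambert-layer}; your added details (distinct layers give distinct hits, and large hits lie in large layers) only make explicit what the paper leaves implicit.
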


\begin{proof}
Since $\tau_j=\log(jB/L)=\log j+\log(1/\rho)$,
\cref{eq:width-j-expansion} gives
\[
  j^2\bigl(d_+(x_j)-(z_j-x_j)\bigr)
  =\Theta_j+O_{c,b}\left(\frac{(\log j)^2}{j}\right).
\]
The error tends to zero, while the exact layer criterion is
$d_+(x_j)<z_j-x_j$.  A negative lower limit therefore supplies infinitely
many hits separated from the boundary by a fixed normalized margin; a
positive lower limit excludes every sufficiently large layer.
\end{proof}

\begin{corollary}[The exact reduction for $2^m$]\label{cor:two-lambert}
For $(c,b)=(2,10)$,
\[
  x_j=-\frac{W_{-1}(-\log2\,10^{-j})}{\log2},
  \qquad
  z_j=-1-\frac{W_{-1}(-\log2/(2\cdot10^j))}{\log2}.
\]
Hence $\Sset_{2,10}$ is infinite if and only if
\begin{equation}\label{eq:two-exact-one-sided}
  \lceil x_j\rceil<z_j
\end{equation}
for infinitely many $j$.  If
\[
  \lambda_{2,10}=\liminf_{j\to\infty}j(\lceil x_j\rceil-x_j),
\]
then
\[
  \lambda_{2,10}<\frac1{\log10}\Longrightarrow\Sset_{2,10}\text{ infinite},
  \qquad
  \lambda_{2,10}>\frac1{\log10}\Longrightarrow\Sset_{2,10}\text{ finite}.
\]
At equality, successive explicit terms in
\cref{eq:width-j-expansion} yield higher-order sufficient tests when the
corresponding normalized remainder is separated from zero.  No such sign
determination for this fixed sequence is currently known.
\end{corollary}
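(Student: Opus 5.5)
The statement is a direct specialization of \cref{thm:lambert-layer} and \cref{cor:second-order-critical-criterion} to $(c,b)=(2,10)$, so the plan is to substitute these parameters and check the global range. Here $\alpha=\log_{10}2\in(0,1)$, so $s=0$, $\rho=\log_{10}2$, $C=b^\rho=2$ and $L=\log 2$. Substituting into \cref{eq:xz-lambert} gives $x_j=-W_{-1}(-\log2\cdot10^{-j})/\log2$ and $z_j=-1-W_{-1}(-\log2/(2\cdot10^j))/\log2$, as displayed.

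For the infinitude equivalence, \cref{thm:lambert-layer} already says that $\Sset_{2,10}$ is infinite iff $\lceil x_j\rceil<z_j$ for infinitely many sufficiently large $j$. The finitely many omitted layers cannot affect an ``infinitely many'' statement. Alternatively, the verification in \cref{cor:two-candidate-two-gap} shows that the large branch and the one-candidate property hold from $j=1$, so nothing is lost at all.

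For the $\lambda_{2,10}$ dichotomy, I would not pass through $\Theta_j$. Instead I would use the first-order width $z_j-x_j=1/(jB)+O((\log j)/j^2)$ from \cref{eq:width-first-order}, with $B=\log10$. Multiplying the criterion $d_+(x_j)<z_j-x_j$ by $j$ turns it into $j\,d_+(x_j)<1/\log10+O((\log j)/j)$.

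If $\lambda_{2,10}<1/\log10$, pick $\eta>0$ with $\lambda_{2,10}<1/\log10-2\eta$. Infinitely many $j$ then have $j\,d_+(x_j)<1/\log10-2\eta$, and for large $j$ the error term is below $\eta$. So the criterion holds at all of those $j$, and the set is infinite.

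If $\lambda_{2,10}>1/\log10$, then for all large $j$ we have $j\,d_+(x_j)>1/\log10+2\eta$. This exceeds $j(z_j-x_j)$ eventually, so only finitely many layers contain hits, and the set is finite.

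The remaining remark, about higher-order tests at equality, is the content of \cref{cor:second-order-critical-criterion} and its successors obtained from \cref{eq:width-j-expansion}. It asserts nothing new, and the closing sentence is a status remark requiring no proof.

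The only step needing care is the uniform passage from $\liminf$ to infinitely or finitely many $j$ with the error term absorbed. This is routine once a strict margin $\eta$ is fixed, and the boundary case is explicitly excluded.
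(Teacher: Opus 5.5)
Your proposal is correct and follows the paper's own proof. You specialize \cref{thm:lambert-layer} with $\rho=\log_{10}2$, $C=2$, $L=\log 2$, and then read off both strict implications from $j(z_j-x_j)\to 1/\log 10$ by the definition of the lower limit; your $\eta$-margin argument just spells this out, and it needs only that convergence, not the $O((\log j)/j)$ rate.
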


\begin{proof}
Here $\rho=\log_{10}2$, so $C=2$ and $L=\log2$.  Substitute these values in
\cref{thm:lambert-layer}.  Since
$j(z_j-x_j)\to1/\log10$, the two strict implications follow directly from
the definition of the lower limit.
\end{proof}

For a real sequence $u=(u_j)$, write
\begin{align*}
  D_N^{\mathrm{arith},*}(u)
  &=\sup_{0\le v\le1}\left|
    \frac1N\sum_{j\le N}\ind_{[0,v)}(\{u_j\})-v\right|,\\
  D_N^{\log,*}(u)
  &=\sup_{0\le v\le1}\left|
    \frac1{H_N}\sum_{j\le N}
    \frac{\ind_{[0,v)}(\{u_j\})}{j}-v\right|,
  \qquad H_N=\sum_{j\le N}\frac1j.
\end{align*}

\begin{lemma}[Tichy--Turnwald finite-type estimate]
\label{lem:tichy-turnwald-finite-type}
Let $a\notin\mathbb Q$, let $\beta>0$, and suppose that some $\eta\ge1$
has the following property: for every $\epsilon>0$ there is
$c(a,\epsilon)>0$ such that
\[
  \|qa\|\ge c(a,\epsilon)q^{-\eta-\epsilon}
  \qquad(q\ge1).
\]
Then, for every $0<\sigma<1/(\eta+1)$, every fixed
$\gamma\in\mathbb R$, and every half-open circular interval $I\subset\T$,
uniformly in $I$,
\begin{align}
 \#\{n\le N:\{an+\beta\log n+\gamma\}\in I\}
 &=N|I|+O_{a,\beta,\sigma}(N^{1-\sigma}),
 \label{eq:tt-ordinary}\\
 \sum_{n\le N}\frac{\ind_I(\{an+\beta\log n+\gamma\})}{n}
 &=|I|H_N+O_{a,\beta,\sigma}(1).
 \label{eq:tt-logarithmic}
\end{align}
\end{lemma}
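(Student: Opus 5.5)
The plan is to use the Erd\H{o}s--Tur\'an inequality and bound the Weyl sums $S_h(N)=\sum_{n\le N}e(h(an+\beta\log n+\gamma))$ for $1\le h\le H$; we then optimize $H$ and obtain the ordinary estimate \cref{eq:tt-ordinary}. The logarithmic estimate \cref{eq:tt-logarithmic} will follow from \cref{eq:tt-ordinary} by partial summation. This is essentially the argument of Tichy--Turnwald~\cite{tichyturnwald1986}, specialised to the finite-type hypothesis. Uniformity in $I$ and $\gamma$ is automatic: Erd\H{o}s--Tur\'an bounds the discrepancy over all arcs at once, and $\gamma$ only multiplies $S_h$ by a unimodular constant.

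\emph{Step 1 (Weyl sums).} Split $[1,N]$ dyadically, and each dyadic block $[P,2P]$ further into subblocks of length $Q\le P$. On a subblock starting at $n_0$, write $\beta\log n=\beta\log n_0+\beta(n-n_0)/n_0+O(Q^2/P^2)$. The summand is then a linear exponential $e(h(a+\beta/n_0)(n-n_0))$ up to an error $O(hQ^2/P^2)$ per term. A geometric sum gives
\[
\Bigl|\sum_{\text{subblock}}\Bigr|\ll \min\Bigl(Q,\|h(a+\beta/n_0)\|^{-1}\Bigr)+\frac{hQ^3}{P^2}.
\]
Alternatively we use Kusmin--Landau and van der Corput's second-derivative test directly on $f(x)=h(ax+\beta\log x)$, whose derivative $h(a+\beta/x)$ varies slowly and whose second derivative is $\asymp h\beta/P^2$. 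Near points where $\|ha+h\beta/x\|$ is small, the derivative passes close to an integer. Because $h\beta/x\to0$, for large $P$ this forces $\|ha\|$ itself to be small, at most $h\beta/P$ plus the window. The finite-type hypothesis $\|ha\|\ge c\,h^{-\eta-\epsilon}$ prevents this once $P\gg h^{\eta+1+\epsilon}$. In that range Kusmin--Landau gives $\ll\|ha\|^{-1}\ll h^{\eta+\epsilon}$ per dyadic block. For small blocks we use the trivial bound $P\le h^{\eta+1+\epsilon}$. Summing over dyadic blocks gives
\[
S_h(N)\ll h^{\eta+1+\epsilon}+h^{\eta+\epsilon}\log N .
\]

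\emph{Step 2 (optimization).} Erd\H{o}s--Tur\'an gives
\[
D_N\ll \frac NH+\sum_{h\le H}\frac{|S_h|}{h}\ll \frac NH+H^{\eta+1+\epsilon}+H^{\eta+\epsilon}\log N .
\]
Choosing $H=N^{1/(\eta+2)}$ would give the weaker exponent $\sigma<1/(\eta+2)$. To reach $\sigma<1/(\eta+1)$, we refine Step 1: the small-$P$ contribution is really $\sum_{h}h^{-1}\min(N,h^{\eta+1})$, and its relevant part only arises when $\|ha\|$ is close to $h\beta/P$. We therefore count $h\le H$ with $\|ha\|\le\delta$ via the three-distance spacing of $\{ha\}$: there are $\ll H\delta+1$ of them when $\delta$ exceeds the finite-type scale. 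This averaging over $h$ is the main obstacle, and it is where I expect the exact exponent $1/(\eta+1)$ to emerge. If it fails, I would fall back on quoting Tichy--Turnwald's theorem verbatim, since the paper treats it as an external input. With the averaged bound $\sum_{h\le H}|S_h|/h\ll H^{\eta+\epsilon}\log N$, taking $H=N^{1/(\eta+1)-\epsilon'}$ gives $D_N\ll N^{1-\sigma}$ for any $\sigma<1/(\eta+1)$.

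\emph{Step 3 (logarithmic form).} Let $E(t)=\#\{n\le t:\cdot\in I\}-t|I|=O(t^{1-\sigma})$. Abel summation gives
\[
\sum_{n\le N}\frac{\ind_I}{n}=|I|H_N+\frac{E(N)}N+\int_1^N\frac{E(t)}{t^2}\,dt+O(1).
\]
The integral converges because $\sigma>0$, so the whole expression is $|I|H_N+O(1)$, uniformly in $I$.
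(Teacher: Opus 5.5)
Your Steps~1 and~3 are sound, but Step~2 does not prove the exponent in the statement. As you observe, inserting the pointwise bound $\|ha\|^{-1}\ll h^{\eta+\epsilon}$ into the Erd\H{o}s--Tur\'an sum only yields $\sigma<1/(\eta+2)$, and the proposed repair does not work as stated. The count $\#\{h\le H:\|ha\|\le\delta\}\ll H\delta+1$ is a badly-approximable statement. It fails as soon as $a$ has unbounded partial quotients, which the finite-type hypothesis allows. Take consecutive convergent denominators $q_k<q_{k+1}$ and put $T=\sqrt{q_{k+1}/q_k}$ and $H:=\sqrt{q_kq_{k+1}}$. Then the multiples $h=jq_k$ with $1\le j\le T$ satisfy $h\le H$ and $\|ha\|\le j\|q_ka\|<T/q_{k+1}=1/H$. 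So at $\delta=1/H$ (which is above $H^{-\eta-\epsilon}$) there are about $T$ such $h$, while $H\delta+1=2$. The sharp exponent $1/(\eta+1)$ is therefore left resting on ``I expect''. Your fallback of quoting Tichy--Turnwald is exactly what the paper does. It cites their Remark~2 for $D_N^{\mathrm{arith},*}\ll N^{-1/(\eta+1)+\epsilon}$ and their formula~(1) for $D_N^{\log,*}\ll 1/\log N$. It then only reduces circular intervals to initial ones and absorbs $\gamma$ as a rotation.

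The gap closes if you never make the pointwise substitution. Your dyadic analysis actually gives $|S_h(N)|\ll\min\{N,h\beta/\|ha\|\}+\|ha\|^{-1}\log N$: use the trivial bound for $P\le 2h\beta/\|ha\|$, and Kusmin--Landau with $\|f'\|\ge\|ha\|/2$ beyond. So Erd\H{o}s--Tur\'an only needs $\sum_{h\le H}\|ha\|^{-1}\ll H^{\eta+\epsilon}\log H$. This follows from spacing, not from an $H\delta$ count. The points $0,\{a\},\dots,\{Ha\}$ are pairwise at circular distance at least $\mu_H=\min_{1\le d\le H}\|da\|\ge c(a,\epsilon)H^{-\eta-\epsilon}$. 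Hence at most $2\delta/\mu_H$ indices $1\le h\le H$ have $\|ha\|\le\delta$, and a dyadic sum over $\mu_H\le\delta\le 1/2$ gives $\ll\mu_H^{-1}\log H$. Therefore $ND_N\ll N/H+H^{\eta+\epsilon}(\log N)^2$, and $H=N^{1/(\eta+1)}$ gives every $\sigma<1/(\eta+1)$, uniformly in $I$ and $\gamma$.

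With this inserted, your route is a self-contained alternative to the paper's citation. Step~3 also improves on the paper: Abel summation shows that the logarithmic estimate is a formal consequence of any power saving in the ordinary one. So the separate logarithmic-discrepancy input from Tichy--Turnwald is not needed.
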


\begin{proof}
Remark~2 in Section~2 of Tichy--Turnwald
\cite[pp.~356--357]{tichyturnwald1986} gives
\[
 D_N^{\mathrm{arith},*}(an+\beta\log n)
 \ll_{a,\beta,\epsilon}
 N^{-1/(\eta+1)+\epsilon}.
\]
Choosing $\epsilon<1/(\eta+1)-\sigma$ proves the initial-interval form of
\cref{eq:tt-ordinary}.  A general circular interval is the difference of
two initial intervals (or the complement of such a difference), so the
same estimate holds uniformly, with at most an absolute factor two.  A
fixed $\gamma$ merely rotates the test interval.

Tichy--Turnwald normalize logarithmic discrepancy by
$H_N=\sum_{n\le N}1/n$ and prove, using their formula~(1) on p.~356,
\[
  D_N^{\log,*}(an+\beta\log n)\ll_{a,\beta}\frac1{\log N}.
\]
Multiplication by $H_N\asymp\log N$, followed by the same two-endpoint
reduction, gives \cref{eq:tt-logarithmic}.  This records explicitly the
finite-type convention, the sign assumption on $\beta$, the interval
uniformity, and the normalization used from the cited paper.
\end{proof}

\begin{theorem}[Power-saving discrepancy for algebraic Lambert sequences]
\label{prop:two-log-discrepancy}
Let $b\ge2$ be an integer and let $c>1$ be a fixed positive real algebraic
number such that $\alpha=\log_b c$ is irrational.  Write
\[
  s=\lfloor\alpha\rfloor,\qquad \rho=\alpha-s,\qquad
  B=\log b,\qquad L=\rho B=\log(c/b^s),
\]
and, for all sufficiently large $j$, let $x_j$ be the large real root of
\begin{equation}\label{eq:general-lambert-root}
  Lx_j-\log x_j=jB.
\end{equation}
There is an effectively computable $\eta=\eta(c,b)\ge1$ such that, for every
\[
  0<\sigma<\frac1{\eta+1},
\]
uniformly over all circular intervals $I\subset\T$,
\begin{align}
  \#\{j\le N:\{x_j\}\in I\}
  &=N|I|+O_{c,b,\sigma}(N^{1-\sigma}),
  \label{eq:ordinary-interval-discrepancy}\\
  \sum_{j\le N}\frac{\ind_I(\{x_j\})}{j}
  &=|I|H_N+O_{c,b,\sigma}(1),
  \qquad H_N=\sum_{j\le N}\frac1j.
  \label{eq:log-interval-discrepancy}
\end{align}
Equivalently, the ordinary and logarithmically weighted star discrepancies
satisfy
\begin{equation}\label{eq:log-star-discrepancy}
  D_N^{\mathrm{arith},*}(x)\ll_{c,b,\sigma} N^{-\sigma},
  \qquad
  D_N^{\log,*}(x)\ll_{c,b,\sigma}\frac1{\log N}.
\end{equation}
The assertion is deterministic for every prescribed algebraic pair under
the stated irrationality hypothesis; it contains no almost-everywhere
qualification.  In particular, it applies to $(c,b)=(2,10)$.
\end{theorem}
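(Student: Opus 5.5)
The plan is to reduce $x_j$ to a sequence of the form $an+\beta\log n+\gamma$ plus a small error, and then apply \cref{lem:tichy-turnwald-finite-type}.

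\textbf{Step 1: asymptotic inversion.} Writing $a=B/L=1/\rho$ and inverting \cref{eq:general-lambert-root} gives
\[
  x_j=aj+\frac1L\log j+\gamma+e_j,
  \qquad
  \gamma=\frac1L\log\frac aL,
  \qquad
  e_j=O_{c,b}\!\left(\frac{\log j}{j}\right).
\]
To derive this, put $x_j=aj+r_j$. The equation becomes $Lr_j=\log x_j=\log(aj)+\log\bigl(1+r_j/(aj)\bigr)$, and $r_j=O(\log j)$ from the proof of \cref{thm:lambert-layer}. Substituting back gives the expansion with $\beta=1/L>0$.

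\textbf{Step 2: finite type of $a$.} Next I need the irrationality measure hypothesis of \cref{lem:tichy-turnwald-finite-type} for $a=1/\rho$. Since $\rho=\alpha-s$ and $\alpha=\log c/\log b$, one has $a=\log b/(\log c-s\log b)$. For $q\ge1$ and the nearest integer $p$,
\[
  \|qa\|=\frac{\bigl|q\log b-p(\log c-s\log b)\bigr|}{L}
  =\frac{\bigl|(q+ps)\log b-p\log c\bigr|}{L}.
\]
This is a linear form in the two logarithms $\log b,\log c$ of algebraic numbers with integer coefficients of size $\ll q$. It is nonzero because $\alpha\notin\mathbb Q$: vanishing would force $\log c/\log b$ rational. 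Matveev's theorem~\cite{matveev2000} gives $|\Lambda|\ge\exp(-C(c,b)\log(eq))=q^{-\eta'}$ with an effective constant. So $\|qa\|\gg q^{-\eta}$ for an effective $\eta\ge1$, where I enlarge $\eta$ to be at least $1$. Irrationality of $a$ also follows from that of $\rho$.

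\textbf{Step 3: perturbation.} \cref{lem:tichy-turnwald-finite-type} gives \cref{eq:tt-ordinary,eq:tt-logarithmic} for $u_j=aj+\beta\log j+\gamma$. The actual phases differ by $e_j$, and $|e_j|\le\epsilon_J:=C\log J/J$ for $j\ge J$. For a circular interval $I$, let $I^-\subset I\subset I^+$ be the intervals shrunk and enlarged by $\epsilon_J$ at each end. On dyadic blocks $j\in(J,2J]$, the count for $\{x_j\}\in I$ is sandwiched between the counts for $\{u_j\}$ in $I^\pm$. Those equal $J|I|+O(J\epsilon_J)+O(J^{1-\sigma})$, the latter by differencing the lemma at $2J$ and $J$. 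The term $J\epsilon_J=O(\log J)$ is absorbed, and summing dyadic blocks gives \cref{eq:ordinary-interval-discrepancy}. The finitely many initial $j$ where the root is undefined contribute $O(1)$.

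For the logarithmic version I would use partial summation against the ordinary estimate, or, to match the sharper $O(1)$ error, sandwich directly. The weighted error from the shift is $\sum_j\epsilon_j/j<\infty$. The weighted counts for $u_j$ in $I^\pm$ are controlled by \cref{eq:tt-logarithmic}, but the intervals $I^\pm$ vary with $J$. So I apply the lemma on each dyadic block with the fixed intervals $I^\pm_J$, using its uniformity in $I$. The block error must then be summable, and I expect this to be the main obstacle. I would route the logarithmic bound through partial summation from the power-saving ordinary bound on blocks, where uniformity in $I$ is available. The block sums of $1/j\cdot(\text{count error})$ are then $\ll\sum_J J^{-\sigma}+\sum_J\epsilon_J<\infty$, which yields $|I|H_N+O(1)$.

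\textbf{Step 4: star discrepancies.} Taking $I=[0,v)$ and the supremum in $v$ gives \cref{eq:log-star-discrepancy}, using $H_N\asymp\log N$. For $(2,10)$, $c=2$ is algebraic and $\log_{10}2$ is irrational, so the theorem applies.
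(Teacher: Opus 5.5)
Your argument is correct after one small fix. For the Diophantine input and the ordinary count it is the paper's argument. The same Matveev bound for $(q+sp)\log b-p\log c$ gives finite type of $a=1/\rho$. Your dyadic sandwich between $I^-$ and $I^+$ is equivalent to the paper's count of indices with $\ind_I(\{x_j\})\ne\ind_I(\{y_j\})$: both apply the ordinary Tichy--Turnwald estimate, with its uniformity in $I$, to $J$-dependent intervals of length $O(\log J/J)$ at $\partial I$.

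The fix concerns $\gamma$. Your own computation $Lr_j=\log(aj)+\log(1+r_j/(aj))$ gives $\gamma=(\log a)/L$, as in the paper, not $\frac1L\log\frac aL$. With the value you wrote, $e_j\to(\log L)/L$ rather than $e_j=O(\log j/j)$. Since the lemma allows any fixed $\gamma$, nothing else changes.

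Where you genuinely differ is the logarithmic estimate. The paper applies \cref{eq:y-log-discrepancy} once, globally and with the fixed interval $I$, to $y_j$. It then shows that the weighted number of indicator changes is at most $\sum_M(\log M/M+M^{-\sigma})<\infty$. This is how it avoids the non-summable $O(1)$-per-block error that you correctly flagged for a blockwise sandwich.

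You instead derive \cref{eq:log-interval-discrepancy} directly from \cref{eq:ordinary-interval-discrepancy} for $x_j$ itself. This is cleanest done globally rather than by blocks. With $S(n)=n|I|+E(n)$ and $|E(n)|\ll n^{1-\sigma}$ uniformly in $I$, Abel summation gives
\[
  \sum_{j\le N}\frac{\ind_I(\{x_j\})}{j}
  =|I|H_N+\frac{E(N)}{N}+\sum_{j<N}\frac{E(j)}{j(j+1)}
  =|I|H_N+O(1).
\]
So the logarithmic half of \cref{lem:tichy-turnwald-finite-type} is not needed at all: any power saving in the ordinary count already forces bounded logarithmic error. The paper's route keeps the two conclusions parallel to the cited results. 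However, its crossing-weight bound already relies on the power-saving ordinary estimate, so it gains no generality over yours.

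Your intermediate remark that the weighted shift error is $\sum_j\epsilon_j/j$ is only heuristic. The indicator changes only if $\{u_j\}$ lies within $\epsilon_j$ of $\partial I$, and bounding the weight of those indices needs the discrepancy of $(u_j)$. That yields $\sum_J(\epsilon_J+J^{-\sigma})$ over dyadic $J$, not $\sum_j\epsilon_j/j$. Your final route does not depend on that remark.
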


\begin{proof}
Set
\[
  a=\frac BL=\frac1\rho,\qquad \beta=\frac1L.
\]
We first verify the Diophantine hypothesis for this fixed $a$.  For
$q\ge1$, let $p$ be a nearest integer to $qa$ and put
\[
  \Lambda=qB-pL=(q+sp)\log b-p\log c.
\]
The number $\Lambda$ is nonzero: equality would give $a=p/q$ and hence
$\alpha=s+1/a\in\mathbb Q$.  Matveev's lower bound for a nonzero linear
form in logarithms of fixed algebraic numbers
\cite{matveev2000} supplies effective constants $C_0\ge1$ and $C_1>0$ for
which
\[
  |\Lambda|\ge
  C_1\max\{3,|q+sp|,|p|\}^{-C_0}.
\]
Here $|p|\le aq+1/2$, so, after changing the constant,
\begin{equation}\label{eq:finite-type-lambert-general}
  \|qa\|
  =\frac{|qB-pL|}{L}
  \ge C_2q^{-C_0}\qquad(q\ge1).
\end{equation}
Thus $a$ has finite approximation type; we may take $\eta=C_0$.

Now define
\[
  y_j=aj+\beta\log j+\gamma,
  \qquad \gamma=\frac{\log a}{L}.
\]
Since $\beta=1/L>0$, \cref{eq:finite-type-lambert-general} satisfies the
hypotheses of \cref{lem:tichy-turnwald-finite-type}.  Hence, for every
$0<\sigma<1/(\eta+1)$ and every circular interval $I$, uniformly in $I$,
\begin{align}
  \#\{j\le N:\{y_j\}\in I\}
  &=N|I|+O_{c,b,\sigma}(N^{1-\sigma}),
  \label{eq:y-ordinary-discrepancy}\\
  \sum_{j\le N}\frac{\ind_I(\{y_j\})}{j}
  &=|I|H_N+O_{c,b,\sigma}(1).
  \label{eq:y-log-discrepancy}
\end{align}
Adding the fixed constant $\gamma$ only rotates the test interval and hence
does not affect these uniform estimates.

It remains to transfer the estimates from $y_j$ to the exact Lambert root.
Extend $x_j$ to the large real root $x(t)$ of
\[
  Lx(t)-\log x(t)=Bt.
\]
The defining equation first gives $x(t)=at+O_{c,b}(\log t)$, and then
\begin{equation}\label{eq:x-slow-perturbation}
  x(t)=at+\beta\log t+\gamma+r(t),
  \qquad
  r(t)=\frac1L\log\frac{x(t)}{at}
      =O_{c,b}\left(\frac{\log t}{t}\right).
\end{equation}
Implicit differentiation also gives
\[
  r'(t)
  =\frac{a}{Lx(t)-1}-\frac1{Lt}
  =-\frac{a(\log x(t)-1)}
  {Bt\,[Bt+\log x(t)-1]}
  =O_{c,b}\left(\frac{\log t}{t^2}\right).
\]
Only the first estimate in \cref{eq:x-slow-perturbation} is needed below,
but the derivative estimate records that the perturbation is genuinely
slow.

Fix a dyadic block $B_M=\{j:M\le j<2M\}$, where $M$ is a power of two.
For a suitable fixed $R>0$ and all sufficiently large $M$,
\[
  |x_j-y_j|\le\delta_M,
  \qquad
  \delta_M=R\frac{\log(2M)}M
  \qquad(j\in B_M).
\]
If $\ind_I(\{x_j\})\ne\ind_I(\{y_j\})$, the circular segment joining
$\{y_j\}$ to $\{x_j\}$ crosses an endpoint of $I$.  Hence $\{y_j\}$ lies
in the $\delta_M$-neighborhood of $\partial I$, a union of at most four
ordinary intervals having total length at most $4\delta_M$.  Applying
\cref{eq:y-ordinary-discrepancy} at $M$ and $2M$ to these intervals gives
\begin{equation}\label{eq:boundary-crossing-count}
  \#\{j\in B_M:
  \ind_I(\{x_j\})\ne\ind_I(\{y_j\})\}
  \ll_{c,b,\sigma} \log M+M^{1-\sigma},
\end{equation}
uniformly in $I$.  Summing \cref{eq:boundary-crossing-count} over dyadic
blocks up to $N$ gives $O_{c,b,\sigma}(N^{1-\sigma})$, which, together with
\cref{eq:y-ordinary-discrepancy}, proves
\cref{eq:ordinary-interval-discrepancy}.

On $B_M$ the total logarithmic weight of all crossing indices is at most
\[
  \frac1M O_{c,b,\sigma}(\log M+M^{1-\sigma})
  =O_{c,b,\sigma}\left(\frac{\log M}{M}+M^{-\sigma}\right).
\]
Both expressions are summable over dyadic $M$.  Consequently the total
weighted difference between the $x_j$ and $y_j$ indicators is bounded,
uniformly in $N$ and $I$.  Combining this with
\cref{eq:y-log-discrepancy} proves
\cref{eq:log-interval-discrepancy}; the star-discrepancy formulation follows
by taking initial intervals and using $H_N\asymp\log N$.
\end{proof}

\begin{corollary}[A deterministic slow shrinking target]
\label{cor:algebraic-slow-target}
Under the hypotheses of \cref{prop:two-log-discrepancy}, let
$\psi:[1,\infty)\to(0,1]$ be eventually nonincreasing, with
$\psi(t)\to0$ and
\[
  \psi(N)\log N\longrightarrow\infty.
\]
Then $d_+(x_j)<\psi(j)$ for infinitely many $j$.  More precisely,
\begin{equation}\label{eq:slow-target-lower-bound}
  \sum_{j\le N}\frac{\ind_{\{d_+(x_j)<\psi(j)\}}}{j}
  \ge \psi(N)H_N-O_{c,b,\sigma}(1).
\end{equation}
This is an infinitude theorem for every prescribed algebraic pair covered
by \cref{prop:two-log-discrepancy}, but its windows remain much larger than
the true prefix width $\asymp1/j$.
\end{corollary}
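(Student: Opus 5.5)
The plan is to apply the logarithmic estimate \cref{eq:log-interval-discrepancy} of \cref{prop:two-log-discrepancy} to a single fixed interval per scale. Then monotonicity of $\psi$ transfers the count from a fixed window to the shrinking windows $\psi(j)$.

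\emph{Step 1: the fixed arc.} The event $d_+(x_j)<\psi(j)$ says that $\{x_j\}\in(1-\psi(j),1)$ or $x_j\in\mathbb Z$. In particular it contains the event $\{x_j\}\in(1-\psi(j),1)$. Fix $N$ beyond the point where $\psi$ is nonincreasing. For $j\le N$ we have $\psi(j)\ge\psi(N)$, so
\[
  \ind_{\{d_+(x_j)<\psi(j)\}}\ge\ind_{I_N}(\{x_j\}),
  \qquad I_N=(1-\psi(N),1).
\]
This holds for all $j_1\le j\le N$, where $j_1$ is the monotonicity threshold. The finitely many $j<j_1$, together with the finitely many small layers on which $x_j$ is undefined, contribute $O(1)$ to the weighted sum. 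We may assign them indicator $0$ on either side.

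\emph{Step 2: apply the uniform logarithmic discrepancy.} The arc $I_N$ is a circular interval of length $\psi(N)$, up to the harmless distinction between open and half-open arcs. That distinction costs at most one point mass, and \cref{eq:log-interval-discrepancy} handles it by uniformity, since it applies to both $[1-\psi(N),1)$ and a slightly shrunken arc. Uniformity in $I$ is the key point, because the interval depends on $N$. We obtain
\[
  \sum_{j\le N}\frac{\ind_{I_N}(\{x_j\})}{j}=\psi(N)H_N+O_{c,b,\sigma}(1),
\]
and Step 1 then gives \cref{eq:slow-target-lower-bound}.

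\emph{Step 3: infinitude.} Since $H_N\asymp\log N$ and $\psi(N)\log N\to\infty$, the right side of \cref{eq:slow-target-lower-bound} tends to infinity. A weighted sum of indicators with weights $1/j$ can be unbounded only if infinitely many indicators equal $1$, so $d_+(x_j)<\psi(j)$ infinitely often. The hypothesis $\psi\to0$ is not needed for the conclusion; it only records that the windows genuinely shrink.

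\emph{Main obstacle.} The only delicate point is justifying the use of a single fixed-interval estimate for a whole range of $j$ with varying targets. This is exactly what the monotone lower bound in Step 1 provides, together with the uniformity of the $O(1)$ term over all circular intervals.
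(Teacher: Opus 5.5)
Your proposal is correct and is essentially the paper's proof: you compare with the single arc of length $\psi(N)$ via eventual monotonicity, invoke the uniformity in $I$ of \cref{eq:log-interval-discrepancy}, and conclude from $\psi(N)H_N\to\infty$. The only differences are cosmetic. You absorb the open endpoint by passing to a slightly shrunken half-open subarc, whereas the paper bounds the logarithmic weight of a single atom uniformly. You are also slightly more careful than the paper in noting that the indices below the monotonicity threshold cost an $O(1)$ that depends on $\psi$.
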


\begin{proof}
Uniformity in \cref{eq:log-interval-discrepancy} also controls endpoint
atoms.  Indeed, enclosing any fixed $t\in\T$ in an interval of length
$\varepsilon$ and then letting $\varepsilon\downarrow0$ gives
\[
  \sum_{\substack{j\le N\\\{x_j\}=t}}\frac1j
  =O_{c,b,\sigma}(1),
\]
uniformly in $t$.  For $0<w<1$, the event $d_+(x_j)<w$ is the union of
$\{\{x_j\}=0\}$ and $\{\{x_j\}\in(1-w,1)\}$; hence changing endpoint
conventions costs only the displayed $O(1)$ atom bound.  Since
$\psi(j)\ge\psi(N)$ for $j\le N$, applying
\cref{eq:log-interval-discrepancy} to the circular interval of length
$\psi(N)$ proves \cref{eq:slow-target-lower-bound}.  Its right-hand side
tends to infinity, which is impossible if only finitely many indices
contribute.
\end{proof}

\begin{theorem}[Subcritical moving targets for algebraic bases]
\label{thm:two-subcritical-moving}
Let $\eta$ be as in \cref{prop:two-log-discrepancy}, fix
$0<\sigma<1/(\eta+1)$, and define
\[
  d_+(x)=\lceil x\rceil-x,
  \qquad
  C_{\kappa,\tau}(N)
  =\#\{j\le N:d_+(x_j)<\kappa j^{-\tau}\}.
\]
If $0<\tau<\sigma$ and $\kappa>0$, then
\begin{equation}\label{eq:two-subcritical-asymptotic}
  C_{\kappa,\tau}(N)
  =\frac{\kappa}{1-\tau}N^{1-\tau}
  +O_{c,b,\kappa,\tau,\sigma}
   \left(N^{1-(\sigma+\tau)/2}\right).
\end{equation}
The same formula holds for $\tau=0$ and $0<\kappa\le1$; in that case the
error may be sharpened to $O_{c,b,\sigma}(N^{1-\sigma})$.
In particular, every sufficiently large dyadic block contains
$\asymp_{c,b,\kappa,\tau}N^{1-\tau}$ such subcritical hits.
\end{theorem}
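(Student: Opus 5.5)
The plan is to cut $[1,N]$ into short blocks on which the moving target $\kappa j^{-\tau}$ is nearly constant, and to apply the uniform interval discrepancy \cref{eq:ordinary-interval-discrepancy} of \cref{prop:two-log-discrepancy} on each block.

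The first step is to express the event as membership of the fractional part in a fixed arc. For $0<w<1$, the event $d_+(x_j)<w$ equals
\[
  \{x_j\}\in\{0\}\cup(1-w,1).
\]
This is a circular interval of length $w$ together with the atom $0$. By uniformity of \cref{eq:ordinary-interval-discrepancy} over shrinking intervals around $0$, the number of $j\le N$ with $\{x_j\}=0$ is $O(N^{1-\sigma})$. Hence for a fixed arc $I_w$ of length $w$,
\[
  \#\{j\le N:d_+(x_j)<w\}=Nw+O(N^{1-\sigma}),
\]
uniformly in $w$. Differencing at two endpoints gives the same statement on any range $P<j\le P+H$ with $P+H\le N$, with error $O(N^{1-\sigma})$.

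The case $\tau=0$ follows at once, giving $\kappa N+O(N^{1-\sigma})$ for $\kappa\le1$; when $\kappa=1$ the condition holds for all $j$ except the integral ones, which is again covered by the atom bound.

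For $0<\tau<\sigma$, I would partition $[1,N]$ into consecutive blocks of length $H=N^{\theta}$, with $\theta$ to be chosen. On a block $(P,P+H]$ the threshold $\kappa j^{-\tau}$ is monotone. So the count on the block is sandwiched between the counts for the constant windows $w_-=\kappa(P+H)^{-\tau}$ and $w_+=\kappa P^{-\tau}$. These windows differ by $\ll\kappa\tau HP^{-\tau-1}$, and each of the two counts equals $H w_\pm+O(N^{1-\sigma})$. Summing over the $N/H$ blocks gives three contributions:
\begin{itemize}
  \item the main term $\sum_j\kappa j^{-\tau}+O(N^{1-\tau}/H)$, which is $\frac{\kappa}{1-\tau}N^{1-\tau}+O(1+N^{1-\tau-\theta})$ by Euler--Maclaurin;
  \item the discrepancy error $(N/H)N^{1-\sigma}=N^{2-\sigma-\theta}$;
  \item the window-variation error $\sum_{\text{blocks}}H^2P^{-\tau-1}\ll H\sum_j j^{-\tau-1}\cdot(\text{const})$, which is $\ll H$ (and at most $H\log N$).
\end{itemize}

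The step I expect to be the main obstacle is balancing the discrepancy error against the variation error. Because the local error is measured against $N$ rather than $P$, the first blocks are wasteful. So I would instead organise the blocks dyadically, $P\in[M,2M)$, and use the version of \cref{eq:ordinary-interval-discrepancy} at scale $M$, which has error $O(M^{1-\sigma})$ per block via differencing. With block length $H_M$, the dyadic range contributes $(M/H_M)M^{1-\sigma}+H_M^2M^{-\tau-1}\cdot(M/H_M)$. Equalising, $H_M^2=M^{2-\sigma+\tau}$, so $H_M=M^{1-(\sigma-\tau)/2}\le M$, valid since $\tau<\sigma$. The per-range error is then $M^{1-(\sigma+\tau)/2}$. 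Summing geometrically over $M\le N$ yields $O(N^{1-(\sigma+\tau)/2})$, as claimed.

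Finally, since $(\sigma+\tau)/2>\tau$, the error is $o(N^{1-\tau})$. Taking differences at $N$ and $2N$ then gives $\asymp N^{1-\tau}$ hits in every large dyadic block.
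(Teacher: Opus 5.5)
Your proposal is correct and is essentially the paper's proof: the same endpoint-atom bound, the same fixed-window short-block formula with error $O(M^{1-\sigma})$, the same sandwich of the moving threshold between its values at the block ends, and the same block length $M^{1-(\sigma-\tau)/2}$. The only differences are that the paper builds an adaptive block sequence $H_r=\lfloor M_r^{1-\lambda}\rfloor$, $\lambda=(\sigma-\tau)/2$, instead of working inside dyadic ranges, and one harmless slip of yours: for $\kappa=1$ the condition $d_+(x_j)<1$ holds for every $j$, integral $x_j$ included, since $d_+$ takes values in $[0,1)$.
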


\begin{proof}
Put $u_j=d_+(x_j)\in[0,1)$.  We do not assume that the general algebraic
Lambert root is never an integer.  Uniformity in
\cref{eq:ordinary-interval-discrepancy}, applied to an interval of length
$\varepsilon$ containing any prescribed $t\in\T$ and followed by
$\varepsilon\downarrow0$, gives the endpoint-atom bound
\[
  \#\{j\le N:\{x_j\}=t\}=O_{c,b,\sigma}(N^{1-\sigma})
\]
uniformly in $t$.  For $0<w<1$,
\[
  \{u_j<w\}
  =\{\{x_j\}=0\}\cup\{\{x_j\}\in(1-w,1)\}.
\]
The cases $w=0,1$ are immediate.  Reflecting the second interval and using
the atom bound therefore shows that, uniformly for $0\le w\le1$ and
$1\le H\le M$,
\begin{equation}\label{eq:fixed-window-short-block}
  \#\{M\le j<M+H:u_j<w\}
  =Hw+O_{c,b,\sigma}(M^{1-\sigma}).
\end{equation}
Indeed, this is obtained by subtracting the two corresponding prefix
counts at $M+H-1$ and $M-1$.

Assume first $0<\tau<\sigma$; finitely many indices for which
$\kappa j^{-\tau}>1$ may be discarded.  Put
\[
  \lambda=\frac{\sigma-\tau}{2}>0.
\]
Starting from a sufficiently large integer $M_0$, define consecutive blocks
by
\[
  H_r=\lfloor M_r^{1-\lambda}\rfloor,
  \qquad M_{r+1}=M_r+H_r.
\]
On the block $M=M_r\le j<M+H_r$, monotonicity gives
\[
  \kappa(M+H_r)^{-\tau}
  \le\kappa j^{-\tau}\le\kappa M^{-\tau}.
\]
The difference between the two fixed thresholds is, by the mean-value
theorem,
\[
  \kappa M^{-\tau}-\kappa(M+H_r)^{-\tau}
  =O_{\kappa,\tau}(H_rM^{-\tau-1}).
\]
Applying \cref{eq:fixed-window-short-block} to both thresholds therefore
yields the moving-window block formula
\begin{equation}\label{eq:moving-window-adaptive-block}
\begin{aligned}
  &\#\{M\le j<M+H_r:u_j<\kappa j^{-\tau}\}\\
  &\qquad=\kappa H_rM^{-\tau}
  +O_{\kappa,\tau}(H_r^2M^{-\tau-1})
  +O_{c,b,\sigma}(M^{1-\sigma}).
\end{aligned}
\end{equation}

Since $H_r\asymp M_r^{1-\lambda}$, summation over the consecutive block
beginnings is bounded by the corresponding integrals with density
$dt/t^{1-\lambda}$.  Hence
\begin{align}
  \sum_{M_r\le N}M_r^{1-\sigma}
  &\ll N^{1+\lambda-\sigma},
  \label{eq:sum-adaptive-disc}\\
  \sum_{M_r\le N}H_r^2M_r^{-\tau-1}
  &\ll N^{1-\lambda-\tau}.
  \label{eq:sum-adaptive-motion}
\end{align}
The same mean-value estimate, block by block, gives
\[
  \sum_{M_r\le N}H_rM_r^{-\tau}
  =\sum_{j\le N}j^{-\tau}
   +O_{\tau}(N^{1-\lambda-\tau})+O(1),
\]
where the last block is taken with its actual length.  Its discrepancy error
is $O(N^{1-\sigma})$, which is no larger than the errors displayed above.
Because
\[
  1+\lambda-\sigma=1-\lambda-\tau
  =1-\frac{\sigma+\tau}{2},
\]
summing \cref{eq:moving-window-adaptive-block} and using
\[
  \sum_{j\le N}j^{-\tau}
  =\frac{N^{1-\tau}}{1-\tau}+O(1)
  \qquad(0<\tau<1)
\]
proves \cref{eq:two-subcritical-asymptotic}.

If $\tau=0$, the target is the fixed interval $(0,\kappa)$, so the sharper
claim follows directly from \cref{eq:ordinary-interval-discrepancy}.  Finally,
on $N\le j<2N$ the target is squeezed between fixed intervals of lengths
$\kappa(2N)^{-\tau}$ and $\kappa N^{-\tau}$.  Formula
\cref{eq:fixed-window-short-block} and $\tau<\sigma$ give the asserted
two-sided order on every sufficiently large dyadic block.
\end{proof}

\begin{remark}[The exact critical boundary]\label{rem:log-disc-barrier}
The prefix condition in layer $j$ is
$d_+(x_j)<z_j-x_j$, where
\[
  z_j-x_j=\frac1{j\log b}
  +O_{c,b}\left(\frac{\log j}{j^2}\right).
\]
It therefore has critical exponent $\tau=1$, whereas
\cref{thm:two-subcritical-moving} requires $\tau<\sigma\le1/2$.
Even the weighted estimate \cref{eq:log-interval-discrepancy}, applied on a
dyadic block to a fixed interval of length $\asymp1/N$, has main term
$\asymp1/N$ and error $O_{c,b,\sigma}(1)$.  The ordinary estimate has error
$O_{c,b,\sigma}(N^{1-\sigma})$ against a constant-size main term.  Thus neither
estimate forces a critical hit.  The new theorem proves deterministic
moving-target asymptotics strictly below the critical scale for every fixed
algebraic pair covered above, but remains compatible with both finiteness
and infinitude.  In particular, it does not decide $\Sset_{2,10}$.
\end{remark}

%% file: sections/en/04_counting_identity.tex
\section{Exact counting and signed discrepancy}

The total target mass telescopes; for every integer $N\ge1$,
\begin{equation}\label{eq:mass-telescope}
  \sum_{m\le N}\delta_b(m)=\log_b(N+1),\qquad
  \sum_{b^{j-1}\le m<b^j}\delta_b(m)=1.
\end{equation}
The divergence in \cref{eq:mass-telescope} motivates a logarithmic counting
law, but deterministic dependence prevents a direct Borel--Cantelli
argument at a fixed parameter.

The next identity isolates that dependence in one fixed interval.  From now
on in deterministic statements we assume $c\ge2$.

\begin{theorem}[Exact discrepancy identity]\label{thm:exact-discrepancy}
Assume $c\ge2$.  Write $\alpha=s+\rho$, where
$s=\lfloor\alpha\rfloor$ and
$\rho=\{\alpha\}$.  Define
\[
  y_n=n\alpha-\log_b(n+1)
\]
and
\[
  B_{\alpha,b}(N)
  =\#\{0\le n<N:\{y_n\}\in[1-\rho,1)\},
\]
with the interval interpreted as empty when $\rho=0$.  Then
\begin{equation}\label{eq:exact-discrepancy}
  \boxed{
  A_{c,b}(N)=\log_b(N+1)+B_{\alpha,b}(N)-\rho N+\{y_N\}.}
\end{equation}
Consequently, if $D_{\alpha,b}(N)=B_{\alpha,b}(N)-\rho N$, then
\begin{align*}
  \Sset_{c,b}\text{ is infinite}
  &\Longleftrightarrow
    \log_b(N+1)+D_{\alpha,b}(N)\text{ is unbounded},\\
  A_{c,b}(N)\sim\log_bN
  &\Longleftrightarrow D_{\alpha,b}(N)=o(\log N),\\
  \Sset_{c,b}\text{ is finite}
  &\Longleftrightarrow
    D_{\alpha,b}(N)=-\log_b(N+1)+O(1).
\end{align*}
\end{theorem}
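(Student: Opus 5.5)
The plan is to telescope the floor-difference indicator from the proof of \cref{thm:lambert-layer}, but written in the original coordinates rather than the layer coordinates. For $c\ge2$, \cref{prop:criterion} shows that the condition $F_{c,b}(m)\ge0$ is automatic. The half-open prefix condition then gives, for every $m\ge1$,
\[
  \ind_{\{m\in\Sset_{c,b}\}}
  =\lfloor F_{c,b}(m)\rfloor-\lfloor G(m)\rfloor,
  \qquad G(m)=m\alpha-\log_b(m+1)=y_m .
\]
This holds because $0<F_{c,b}(m)-G(m)=\delta_b(m)\le1$, so at most one integer lies in $(G(m),F_{c,b}(m)]$. It also covers $\rho=0$ directly, without any Lambert inversion.

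The key observation is the index shift $F_{c,b}(m)=y_{m-1}+\alpha$, since $y_{m-1}=(m-1)\alpha-\log_b m$. Splitting $\alpha=s+\rho$ and writing $y=\lfloor y\rfloor+\{y\}$ gives
\[
  \lfloor y_{m-1}+\alpha\rfloor
  =\lfloor y_{m-1}\rfloor+s+\ind_{\{\{y_{m-1}\}\in[1-\rho,1)\}}.
\]
This holds because $\{y\}+\rho<2$, and $\lfloor\{y\}+\rho\rfloor=1$ exactly when $\{y\}\ge1-\rho$; the interval is empty when $\rho=0$. Summing the indicator over $1\le m\le N$, the terms $\lfloor y_{m-1}\rfloor-\lfloor y_m\rfloor$ telescope to $\lfloor y_0\rfloor-\lfloor y_N\rfloor$. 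The constants contribute $sN$, and the interval indicators over $0\le n<N$ contribute exactly $B_{\alpha,b}(N)$.

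Next I would use $y_0=0$ together with $\lfloor y_N\rfloor=N\alpha-\log_b(N+1)-\{y_N\}$. Then $sN-N\alpha=-\rho N$, and \cref{eq:exact-discrepancy} drops out. The only care needed is the bookkeeping of the index shift $n=m-1$, which is why $B_{\alpha,b}$ ranges over $0\le n<N$.

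For the consequences, the key fact is $0\le\{y_N\}<1$, so $A_{c,b}(N)=\log_b(N+1)+D_{\alpha,b}(N)+O(1)$. Since $A_{c,b}$ is nondecreasing, $\Sset_{c,b}$ is infinite iff $A_{c,b}$ is unbounded, which gives the first equivalence. It is finite iff $A_{c,b}(N)=O(1)$, which gives the third. Dividing by $\log_bN$ gives the second equivalence. I expect no real obstacle; the only delicate point is confirming that the floor-difference indicator is exact, including the half-open endpoints, which is where the hypothesis $c\ge2$ is used.
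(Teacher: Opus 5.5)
Your proposal is correct and matches the paper's proof essentially step for step. Both arguments write the hit indicator as $\lfloor F_{c,b}(m)\rfloor-\lfloor y_m\rfloor$ using $c\ge2$ and $0<\delta_b(m)\le1$, use the shift $F_{c,b}(m)=y_{m-1}+\alpha$ with the carry term $\ind_{\{\{y_{m-1}\}\in[1-\rho,1)\}}$, telescope to $A_{c,b}(N)=Ns+B_{\alpha,b}(N)-\lfloor y_N\rfloor$ (your explicit use of $y_0=0$ fills in a step the paper leaves implicit), and read off the three equivalences from $0\le\{y_N\}<1$ and the monotonicity of $A_{c,b}$.
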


\begin{proof}
Let $I_m=\ind_{\{m\in\Sset_{c,b}\}}$.  By
\cref{prop:criterion} and $0<\delta_b(m)\le1$,
\[
  I_m=\lfloor F_{c,b}(m)\rfloor
      -\lfloor F_{c,b}(m)-\delta_b(m)\rfloor.
\]
Now
\[
  F_{c,b}(m)=y_{m-1}+\alpha,\qquad
  F_{c,b}(m)-\delta_b(m)=y_m,
\]
and
\[
  \lfloor y_{m-1}+\alpha\rfloor
  =\lfloor y_{m-1}\rfloor+s
   +\ind_{\{\{y_{m-1}\}\in[1-\rho,1)\}}.
\]
Summing from $m=1$ to $N$ telescopes the integer parts and gives
\[
  A_{c,b}(N)=Ns+B_{\alpha,b}(N)-\lfloor y_N\rfloor.
\]
Substituting
\[
  \lfloor y_N\rfloor
  =Ns+N\rho-\log_b(N+1)-\{y_N\}
\]
proves \cref{eq:exact-discrepancy}.  The three equivalences follow because
$\{y_N\}$ is bounded and $A_{c,b}(N)$ is a nondecreasing integer-valued
function.
\end{proof}

\begin{remark}\label{rem:fixed-obstruction}
For $(c,b)=(2,10)$, ordinary uniform distribution yields only
$D_{\alpha,10}(N)=o(N)$.  Infinitude requires ruling out the much finer
cancellation
$D_{\alpha,10}(N)=-\log_{10}N+O(1)$.  This is one exact formulation of the
fixed-parameter obstruction.
\end{remark}

%% file: sections/en/05_arithmetic_structure.tex
\section{Arithmetic classification, resonances, and deterministic sparsity}

\begin{proposition}[Rational--transcendental dichotomy for integer pairs]
\label{prop:rational-transcendental-dichotomy}
Let $c,b\ge2$ be integers.  Then
\[
  \log_b c\in\mathbb Q
  \quad\Longleftrightarrow\quad
  c\text{ and }b\text{ are multiplicatively dependent}.
\]
If these equivalent conditions fail, then $\log_b c$ is transcendental;
in particular, it can never be algebraic irrational.
\end{proposition}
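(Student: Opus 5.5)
The proof has two parts: an elementary equivalence via unique factorization, and transcendence in the independent case via the Gelfond--Schneider theorem.

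\emph{Equivalence.} Suppose first that $\log_b c=P/Q$ with integers $P,Q\ge1$; positivity holds because $c,b\ge2$. Then $c^Q=b^P$, which is a nontrivial multiplicative relation, so $c$ and $b$ are multiplicatively dependent.

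Conversely, suppose $c^u=b^v$ for integers $(u,v)\ne(0,0)$. Since $c,b>1$, neither exponent can vanish alone. Both exponents have the same sign, so we may take $u,v\ge1$. Taking logarithms gives $\log_b c=v/u\in\mathbb Q$.

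One may also record the unique-factorization picture. Comparing $p$-adic valuations in $c^u=b^v$ gives $u\,v_p(c)=v\,v_p(b)$ for every prime $p$. Hence the exponent vectors of $c$ and $b$ are proportional. This yields a common base $d$ with $c=d^p$ and $b=d^q$, matching the form used in \cref{prop:dependent-endpoint-parametrization}. This refinement is not needed for the stated equivalence.

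\emph{Transcendence.} Assume $c,b$ are multiplicatively independent, so $\alpha=\log_b c$ is irrational by the first part. Suppose, for contradiction, that $\alpha$ is algebraic. Then $\alpha$ is algebraic and irrational, and $b\ne0,1$ is algebraic. The Gelfond--Schneider theorem therefore says that $b^\alpha$ is transcendental. But $b^\alpha=c$ is an integer, which is a contradiction. Hence $\alpha$ is transcendental. In particular, $\alpha$ can never be algebraic irrational: it is either rational (the dependent case) or transcendental.

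The only substantive input is the Gelfond--Schneider theorem, an external classical result. There is also one minor point to check. The principal real value $b^\alpha=\exp(\alpha\log b)$ is a valid choice of branch in Gelfond--Schneider, since the theorem holds for any determination of $\log b$, and the real logarithm is one such determination. The remaining steps are bookkeeping.
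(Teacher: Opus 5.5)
Your proposal is correct and follows the paper's proof: the equivalence comes from rewriting $\log_b c=P/Q$ as $c^Q=b^P$ (your sign normalization handles the converse), and transcendence follows from Gelfond--Schneider applied to $b^{\log_b c}=c$. The unique-factorization refinement and the branch remark are harmless extras that the stated equivalence does not need.
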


\begin{proof}
Writing $\log_b c=p/q$ is equivalent to $c^q=b^p$, which is precisely
multiplicative dependence after reducing $p/q$.  Conversely, any relation
$c^q=b^p$ gives the same rational logarithm.  If $\log_b c$ were algebraic
irrational, the Gelfond--Schneider theorem would make
$b^{\log_b c}=c$ transcendental, contradicting $c\in\mathbb Z$; see, for
example, \cite{baker1975transcendental}.
\end{proof}

\begin{theorem}[Effective rigidity of integer endpoint equations]
\label{thm:integer-endpoint-rigidity}
Let $c,b\ge2$ be multiplicatively independent integers.  For
$e\in\{0,1\}$, define
\[
  \mathcal Z_e(c,b)=
  \left\{
    m\ge1:c^m=(m+e)b^k
    \text{ for some }k\in\mathbb Z_{\ge0}
  \right\}.
\]
Then both $\mathcal Z_0(c,b)$ and $\mathcal Z_1(c,b)$ are effectively
finite.  More precisely:
\begin{enumerate}[label=\textup{(\alph*)}]
  \item If $\operatorname{rad}(c)\ne\operatorname{rad}(b)$, then no
  $m\ge2$ belongs to either endpoint set.  At $m=1$, the left endpoint is
  impossible under multiplicative independence, whereas the right endpoint
  occurs exactly when $c=2b^k$ for some $k\ge0$.

  \item Suppose $\operatorname{rad}(c)=\operatorname{rad}(b)$.  For every
  prime $r$ in their common support, put
  \[
    C_r=v_r(c),
    \qquad B_r=v_r(b).
  \]
  There exist two common prime divisors $r,s$ such that
  $\Delta_{r,s}=C_rB_s-C_sB_r\ne0$.  For any such pair define
  \begin{equation}\label{eq:integer-endpoint-A}
    A_{r,s}=
    \frac{B_s/\log r+B_r/\log s}{|\Delta_{r,s}|}.
  \end{equation}
  Every solution in either endpoint set satisfies
  \begin{equation}\label{eq:integer-endpoint-bound}
    \boxed{
    m\le4(A_{r,s}+1)\log(2A_{r,s}+2).}
  \end{equation}
\end{enumerate}
Thus factorization of $b$ and $c$ produces an explicit finite search
interval for every possible endpoint equality.
\end{theorem}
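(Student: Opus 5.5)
The plan is to take valuations prime by prime in $c^m=(m+e)b^k$ and compare the two sides.

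\emph{Part (a).} Assume $\operatorname{rad}(c)\ne\operatorname{rad}(b)$. There are two cases.

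First, suppose some prime $r$ divides $b$ but not $c$. If $k\ge1$, then $r$ divides the right-hand side but not $c^m$, which is impossible. So $k=0$ and $c^m=m+e$. For $m\ge2$ we have $c^m\ge2^m>m+1\ge m+e$, so there is no solution with $m\ge2$.

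Second, suppose some prime $r$ divides $c$ but not $b$. Comparing $r$-adic valuations gives
\[
  v_r(m+e)=m\,v_r(c)\ge m,
\]
so $m+e\ge r^m\ge2^m$. Since $2^m\ge m+2$ for $m\ge2$, again no $m\ge2$ occurs.

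It remains to treat $m=1$ directly, where the equation reads $c=(1+e)b^k$. For $e=0$ this says $c=b^k$, which contradicts multiplicative independence. For $e=1$ it says exactly $c=2b^k$. This gives the stated description at $m=1$.

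\emph{Part (b).} Assume the radicals agree. The first step is the existence of a pair $r,s$ with $\Delta_{r,s}\ne0$. If every $2\times2$ minor vanished, the exponent vectors $(C_r)_r$ and $(B_r)_r$ over the common support would be proportional. Then $c^{B}=b^{C}$ for suitable positive integers $B,C$, contradicting multiplicative independence.

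Now fix such a pair and set $t_r=mC_r-kB_r$ and $t_s=mC_s-kB_s$. The valuation comparison gives
\[
  v_r(m+e)=t_r,\qquad v_s(m+e)=t_s,
\]
so $t_r,t_s\ge0$, and $r^{t_r}\le m+e\le m+1$. Hence
\[
  t_r\le\frac{\log(m+1)}{\log r},
\]
and similarly for $t_s$. Eliminating $k$ gives the identity $B_st_r-B_rt_s=m\Delta_{r,s}$. Taking absolute values,
\[
  m|\Delta_{r,s}|\le B_st_r+B_rt_s\le\Bigl(\frac{B_s}{\log r}+\frac{B_r}{\log s}\Bigr)\log(m+1).
\]
That is, every solution satisfies $m\le A\log(m+1)$ with $A=A_{r,s}$.

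\emph{Concluding inequality.} Put $u=2A+2\ge2$ and $m_0=4(A+1)\log(2A+2)=2u\log u$. The map $x\mapsto x/\log(x+1)$ is increasing on $x\ge1$, so it suffices to show $m_0>A\log(m_0+1)$.

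For $u\ge2$ one checks $2u\log u+1\le u^3$; for instance at $u=2$ the left side is about $3.77$. This gives $\log(m_0+1)\le3\log u$. Therefore
\[
  A\log(m_0+1)\le3A\log u<4(A+1)\log u=m_0.
\]
Consequently any $m>m_0$ violates $m\le A\log(m+1)$, which proves \cref{eq:integer-endpoint-bound}. All constants come from the factorizations of $b$ and $c$, so the resulting search interval is explicit and the finiteness is effective.

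The only delicate point is this last elementary inversion, specifically checking the constant $4$ uniformly for small $A$. It holds because $u\ge2$ even when $A$ is small.
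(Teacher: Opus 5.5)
Your proposal is correct and follows the paper's proof essentially step by step. It uses the same prime-support case split in (a), the same proportionality argument for the existence of $\Delta_{r,s}\ne0$, the same elimination identity $m\Delta_{r,s}=B_s v_r(m+e)-B_r v_s(m+e)$, and the same inversion of $m\le A\log(m+1)$ via $u=2A+2$, $m_0=2u\log u$ and $m_0+1\le u^3$. The differences are cosmetic: you write $m+e\ge r^m\ge 2^m\ge m+2$ where the paper writes $v_r(m+e)\le\log(m+1)/\log r<m$, and you leave implicit the finite check over $k$ for each bounded $m$.
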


\begin{proof}
Assume first that a prime $r$ divides $c$ but not $b$.  An endpoint equality
would give
\[
  m v_r(c)=v_r(m+e).
\]
For $m\ge2$, the left side is at least $m$, whereas
$v_r(m+e)\le\log(m+1)/\log r<m$, a contradiction.  If instead a prime
$r$ divides $b$ but not $c$, then
\[
  0=v_r(m+e)+k v_r(b),
\]
so $k=0$ and $c^m=m+e$.  This is impossible for $m\ge2$, since
$c^m\ge2^m>m+1$.  These alternatives exhaust unequal prime supports.
For $m=1$, the equations are respectively $c=b^k$ and $c=2b^k$; the first
contradicts multiplicative independence, and the second gives the stated
exceptional right endpoint.

Now suppose that the prime supports agree.  If
$C_rB_s-C_sB_r=0$ for every pair $r,s$, the two integer exponent vectors
$(C_r)_r$ and $(B_r)_r$ are proportional over $\mathbb Q$.  Clearing
denominators would give positive integers $u,v$ with $c^u=b^v$, contrary
to multiplicative independence.  Hence a pair with
$\Delta_{r,s}\ne0$ exists.

Taking $r$- and $s$-adic valuations in $c^m=(m+e)b^k$ gives
\[
  mC_r=v_r(m+e)+kB_r,
  \qquad
  mC_s=v_s(m+e)+kB_s.
\]
Eliminating $k$ yields
\begin{equation}\label{eq:integer-endpoint-elimination}
  m\Delta_{r,s}
  =B_s v_r(m+e)-B_r v_s(m+e).
\end{equation}
Since $m+e\le m+1$, taking absolute values gives
$m\le A_{r,s}\log(m+1)$.

For completeness, put $A=A_{r,s}$ and
$T=4(A+1)\log(2A+2)$.  The function
$t/\log(t+1)$ is strictly increasing on $(0,\infty)$.  Writing
$y=2(A+1)$, one has $T=2y\log y$ and, for $y\ge2$,
\[
  T+1\le y^2+1\le y^3.
\]
Consequently,
\[
  A\log(T+1)\le3A\log y
  <\frac32y\log y<T.
\]
Thus $T/\log(T+1)>A$, and monotonicity shows that every
$m\le A\log(m+1)$ must satisfy $m\le T$.  This proves
\cref{eq:integer-endpoint-bound}.  Once $m$ is bounded, the unique possible
scale satisfies
\[
  k=\frac{m\log c-\log(m+e)}{\log b},
  \qquad 0\le k\le m\log_b c,
\]
so the remaining verification is finite and effective.
\end{proof}

\begin{corollary}[No nontrivial endpoint degeneracy for $(2,10)$]
\label{cor:two-endpoint-rigidity}
The two decimal endpoint equations satisfy
\[
  2^m=m10^k
  \quad\text{for no }m\ge1,k\ge0,
\]
and
\[
  2^m=(m+1)10^k
  \quad\Longleftrightarrow\quad (m,k)=(1,0).
\]
The latter is an excluded right endpoint, so $m=1$ is not a prefix hit.
For every $m\ge2$, any actual hit satisfies
\[
  m10^k<2^m<(m+1)10^k.
\]
\end{corollary}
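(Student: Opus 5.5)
This is a direct specialization of \cref{thm:integer-endpoint-rigidity}(a) to $(c,b)=(2,10)$, followed by a short translation into the strict inequalities. First we check the hypotheses. The integers $2$ and $10$ are multiplicatively independent, because $2^u=10^v$ with $u,v\ge1$ would force $5\mid 2^u$. Their radicals differ, since $\operatorname{rad}(2)=2$ and $\operatorname{rad}(10)=10$. So part (a) applies: neither endpoint set contains any $m\ge2$, and at $m=1$ only the right endpoint can occur, exactly when $2=2\cdot10^k$, which forces $k=0$.

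The two endpoint claims can also be checked directly from the proof mechanism. The prime $5$ divides $b$ but not $c$. Comparing $5$-adic valuations in $2^m=(m+e)10^k$ gives $0=v_5(m+e)+k$, so $k=0$ and $2^m=m+e$. For $e=0$ this requires $2^m=m$, which is impossible since $2^m>m$. For $e=1$ it requires $2^m=m+1$, and because $2^m>m+1$ for $m\ge2$, only $m=1$ survives. This gives $(m,k)=(1,0)$, and conversely $2^1=(1+1)10^0$ holds, which establishes the stated equivalence.

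It remains to show that $1\notin\Sset_{2,10}$ and to deduce the strict inequalities. For $m=1$ the defining condition $1\cdot10^k\le2<2\cdot10^k$ fails when $k=0$, because $2<2$ is false. For every $k\ge1$ it fails because $10^k>2$. So $m=1$ is not a hit; it lies exactly at the excluded half-open right endpoint. Now let $m\ge2$ be a hit, so $m10^k\le2^m<(m+1)10^k$ for some $k\ge0$. Equality $2^m=m10^k$ is ruled out by the first endpoint claim, so the left inequality is strict, and the right inequality is strict by definition. This yields $m10^k<2^m<(m+1)10^k$.

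There is no real obstacle. The only care needed is to confirm the radical hypothesis of \cref{thm:integer-endpoint-rigidity}(a), and to observe that the $m=1$ exception is a right-endpoint equality, which the half-open condition excludes, rather than a hit.
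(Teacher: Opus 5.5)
Your proof is correct and essentially the paper's own. The paper's argument is exactly your second paragraph: $5$-adic valuations force $k=0$, which reduces the two endpoint equations to $2^m=m$ and $2^m=m+1$. Your appeal to part (a) of the general endpoint-rigidity theorem, and your explicit check that $m=1$ is not a hit with the resulting strict inequalities for $m\ge2$, are valid additions.
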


\begin{proof}
Taking $5$-adic valuations in either endpoint equation forces $k=0$.
The first equation then becomes $2^m=m$, which has no positive solution.
The second becomes $2^m=m+1$; it holds at $m=1$, while
$2^m>m+1$ for every $m\ge2$.
\end{proof}

\begin{remark}
The effective-finiteness theorem is specific to integer $b,c$ and concerns
equality at the two endpoints only.  It must not be extended verbatim to a
nonintegral algebraic base, and it does not imply finiteness of the strict
prefix hit set in the multiplicatively independent branch.
\end{remark}

\subsection{Rational logarithmic slopes}

The rational-slope branch admits an exact parametrization even when $c$ is
not an integer.

\begin{theorem}[Exact rational-slope reduction]
\label{thm:rational-slope-reduction}
Assume
\[
  \log_b c=\frac PQ,
  \qquad P,Q\ge1,\qquad (P,Q)=1.
\]
Then
\begin{equation}\label{eq:rational-slope-parametrization}
  \Sset_{c,b}
  =\left\{
    \left\lfloor b^{n/Q}\right\rfloor:
    \begin{array}{l}
      n\ge0,\quad
      n\equiv P\lfloor b^{n/Q}\rfloor\pmod Q,\\
      P\lfloor b^{n/Q}\rfloor\ge n
    \end{array}
  \right\}.
\end{equation}
The final inequality is automatic for all sufficiently large $n$, and
\begin{equation}\label{eq:rational-slope-upper-count}
  A_{c,b}(X)\le Q\log_b(X+1)+O_{c,b}(1).
\end{equation}
Moreover,
\begin{equation}\label{eq:rational-slope-reciprocal}
  \sum_{m\in\Sset_{c,b}}\frac1m<\infty.
\end{equation}
If $Q=1$, then the reduction is completely explicit:
\begin{equation}\label{eq:integral-slope-exact-set}
  \Sset_{b^P,b}=\{b^n:n\ge0\},
  \qquad
  A_{b^P,b}(X)=\lfloor\log_bX\rfloor+1.
\end{equation}
Equivalently, writing $n=Q\ell+r$ with $0\le r<Q$, infinitude is reduced
to deciding whether, for some fixed $r$,
\begin{equation}\label{eq:rational-residue-orbit}
  P\left\lfloor b^\ell b^{r/Q}\right\rfloor
  \equiv r\pmod Q
\end{equation}
holds for infinitely many $\ell$.
\end{theorem}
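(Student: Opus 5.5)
The plan is to turn the prefix condition into an exact equation between integers. Write $c=b^{P/Q}$, so $c^m=b^{Pm/Q}$. By \cref{eq:def-S}, $m\in\Sset_{c,b}$ if and only if some $k\ge0$ satisfies $mb^k\le b^{Pm/Q}<(m+1)b^k$. Raising to the power $Q$ preserves these inequalities, so $m\in\Sset_{c,b}$ exactly when
\[
  m\le b^{(Pm-Qk)/Q}<m+1 .
\]
Put $n=Pm-Qk$. The condition then says that $m=\lfloor b^{n/Q}\rfloor$ with $n\ge0$. The reverse reading needs two facts. First, $n\ge0$ is automatic, because $m\ge1$ and $b^{n/Q}<m+1$ force $b^{n/Q}\ge1$ after the floor equals $m\ge1$; alternatively $m\le b^{n/Q}$ with $m\ge1$ gives it directly. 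Second, $k=(Pm-n)/Q$ must be a nonnegative integer, which is exactly the congruence $n\equiv Pm\pmod Q$ together with $Pm\ge n$.

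Conversely, suppose $n$ satisfies the three displayed conditions and set $m=\lfloor b^{n/Q}\rfloor$. Then $k=(Pm-n)/Q$ is a nonnegative integer, and reversing the inequalities shows $m\in\Sset_{c,b}$. This proves \cref{eq:rational-slope-parametrization}. The inequality $P\lfloor b^{n/Q}\rfloor\ge n$ holds for large $n$ because $\lfloor b^{n/Q}\rfloor$ grows exponentially in $n$.

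For the counting bound, note that each $m\le X$ in $\Sset_{c,b}$ comes from at least one $n$ with $b^{n/Q}<X+1$, that is, $n<Q\log_b(X+1)$. So $A_{c,b}(X)\le\lceil Q\log_b(X+1)\rceil$, which gives \cref{eq:rational-slope-upper-count} and uses no $O(1)$ beyond rounding. For \cref{eq:rational-slope-reciprocal}, the map $n\mapsto m$ has $m\ge b^{n/Q}-1$, and in fact $m\ge\tfrac12 b^{n/Q}$ since $m\ge1$. Summing the geometric series $\sum_n 2b^{-n/Q}$ therefore bounds $\sum 1/m$.

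When $Q=1$ the floor is exact and $m=b^n$. The congruence is vacuous modulo $1$, and $Pb^n\ge n$ always holds, so $\Sset_{b^P,b}=\{b^n:n\ge0\}$. The count follows, with $b^n\le X$ if and only if $n\le\lfloor\log_bX\rfloor$.

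For the residue reformulation, write $n=Q\ell+r$. Then $b^{n/Q}=b^\ell b^{r/Q}$, and the condition $n\equiv Pm\pmod Q$ becomes \cref{eq:rational-residue-orbit}. Distinct $n$ yield distinct $m$ for large $n$, because $b^{1/Q}>1$ eventually separates consecutive floors. By pigeonhole over the $Q$ residues, infinitude is therefore equivalent to one fixed $r$ having infinitely many admissible $\ell$.

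The step needing the most care is this injectivity claim. It holds once $b^{n/Q}(b^{1/Q}-1)\ge1$, and only finitely many exceptions occur before that.
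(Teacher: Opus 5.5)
Your proposal is correct and follows the paper's proof essentially line for line. The shared steps are the substitution $n=Pm-Qk$, reading the congruence together with $Pm\ge n$ as the statement that $k=(Pm-n)/Q$ is a nonnegative integer, counting the $n$ with $b^{n/Q}<X+1$, and the geometric bound via $\lfloor b^{n/Q}\rfloor\ge b^{n/Q}/2$.

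Your extras are valid refinements: the factor-$2$ bound holds for every $n\ge0$, and the eventual-injectivity remark justifies the residue reformulation, which the paper leaves implicit. One small wording correction: your first step divides by $b^k$; it does not raise anything to the $Q$th power.
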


\begin{proof}
If $m\in\Sset_{c,b}$, choose the corresponding $k\ge0$ and put
$n=Pm-Qk$.  Then
\[
  \frac{c^m}{b^k}=b^{n/Q}\in[m,m+1),
\]
so $n\ge0$, $m=\lfloor b^{n/Q}\rfloor$, and $n\equiv Pm\pmod Q$.
Moreover $k=(Pm-n)/Q\ge0$.  Conversely, every $n$ satisfying the
conditions in \cref{eq:rational-slope-parametrization}, with
$m=\lfloor b^{n/Q}\rfloor$ and $k=(Pm-n)/Q$, gives
\[
  c^m=b^k b^{n/Q}\in[mb^k,(m+1)b^k),
\]
hence a prefix hit.  Since $\lfloor b^{n/Q}\rfloor$ grows exponentially,
$Pm\ge n$ eventually.  If $m\le X$, then $b^{n/Q}<X+1$, which proves
\cref{eq:rational-slope-upper-count}.  For all sufficiently large $n$,
$\lfloor b^{n/Q}\rfloor\ge b^{n/Q}/2$; hence the reciprocal sum is bounded
by the convergent geometric series $2\sum_{n\ge0}b^{-n/Q}$.  When $Q=1$,
the congruence is void, $\lfloor b^n\rfloor=b^n$, and $Pb^n\ge n$ for every
$n\ge0$, proving \cref{eq:integral-slope-exact-set}.  Substitution of
$n=Q\ell+r$ gives \cref{eq:rational-residue-orbit}.
\end{proof}

\begin{theorem}[Complete criterion for integral radical branches]
\label{thm:integral-radical-branches}
Retain $P,Q$ from \cref{thm:rational-slope-reduction}.  Write
\[
  b=\prod_{p\mid b}p^{e_p},
  \qquad g=\gcd_{p\mid b}e_p,
\]
and factor
\[
  Q_0=\prod_{p\mid b}p^{v_p(Q)},
  \qquad Q_1=Q/Q_0,
  \qquad (Q_1,b)=1.
\]
Put $T=\operatorname{ord}_{Q_1}(b)$ when $Q_1>1$, and $T=1$ when
$Q_1=1$.  Fix $0\le r<Q$ such that $Q\mid gr$, and set
$M_r=b^{r/Q}\in\mathbb N$.  The branch $n=Q\ell+r$ contains infinitely
many solutions if and only if
\begin{equation}\label{eq:integral-radical-criterion}
  \boxed{
  Q_0\mid r,
  \qquad
  r(PM_r)^{-1}\pmod{Q_1}
  \in\langle b\rangle
  \subset(\mathbb Z/Q_1\mathbb Z)^\times.}
\end{equation}
For $Q_1=1$ the second condition is void.  Whenever
\cref{eq:integral-radical-criterion} holds, all sufficiently large solutions
on this branch are
\[
  m=M_rb^\ell,
  \qquad \ell\equiv\ell_r\pmod T,
\]
for one residue class $\ell_r$, and the branch contributes
\begin{equation}\label{eq:integral-radical-count}
  \frac1T\log_bX+O_{b,P,Q}(1)
\end{equation}
solutions not exceeding $X$.
\end{theorem}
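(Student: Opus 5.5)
The plan is to make the congruence in \cref{thm:rational-slope-reduction} completely explicit on the branch $n=Q\ell+r$. The hypothesis $Q\mid gr$ is what makes this possible. Since $g$ divides every exponent $e_p$, the number $b^{r/Q}=\prod_p p^{e_pr/Q}$ has integral exponents, so $M_r\in\mathbb N$. Its prime divisors all divide $b$, and hence $(M_r,Q_1)=1$.

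Then $b^{n/Q}=M_rb^\ell$ is an integer, so the floor is exact and $m=M_rb^\ell$. Since $n\equiv r\pmod Q$, the congruence condition in \cref{eq:rational-slope-parametrization} becomes
\[
  PM_rb^\ell\equiv r\pmod Q .
\]
The inequality $Pm\ge n$ holds automatically for large $\ell$, because $m$ grows exponentially in $\ell$ while $n$ grows linearly. Distinct $\ell$ give distinct $m$. So the branch contains infinitely many solutions exactly when this congruence has infinitely many solutions $\ell$.

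Next I would split the modulus by the Chinese remainder theorem, using $Q=Q_0Q_1$ with $(Q_0,Q_1)=1$.

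Modulo $Q_0$: every prime dividing $Q_0$ divides $b$. Hence $b^\ell\equiv0\pmod{Q_0}$ for all $\ell\ge\ell^\ast$, where $\ell^\ast$ can be taken as $\max_{p\mid Q_0}v_p(Q)$. For such $\ell$ the congruence reduces to $r\equiv0\pmod{Q_0}$, i.e.\ $Q_0\mid r$. If $Q_0\nmid r$, only finitely many $\ell$ (those with $\ell<\ell^\ast$) can work. This gives the first condition.

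Modulo $Q_1$: since $(P,Q)=1$ and $(M_r,Q_1)=1$, the element $PM_r$ is invertible, and the condition reads
\[
  b^\ell\equiv r(PM_r)^{-1}\pmod{Q_1}.
\]
The left side always lies in the cyclic group $\langle b\rangle\subset(\mathbb Z/Q_1\mathbb Z)^\times$, which has order $T$. So a solution exists iff $r(PM_r)^{-1}$ lies in $\langle b\rangle$. This membership also forces $r$ to be a unit modulo $Q_1$, so no separate case is needed for $r$ not coprime to $Q_1$. When the condition holds, the solution set is exactly one residue class $\ell\equiv\ell_r\pmod T$. For $Q_1=1$ the condition is void and we take $T=1$.

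Combining the two parts: when \cref{eq:integral-radical-criterion} holds, every sufficiently large $\ell\equiv\ell_r\pmod T$ gives a solution $m=M_rb^\ell$. When it fails, only finitely many $\ell$ survive, which proves the equivalence.

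For the count, the bound $M_rb^\ell\le X$ means $\ell\le\log_bX-\log_bM_r$. The number of $\ell$ in one residue class mod $T$ in that range is $\frac1T\log_bX+O(1)$. The finitely many small exceptional $\ell$, those below $\ell^\ast$ or below the threshold for $Pm\ge n$, contribute only $O_{b,P,Q}(1)$. This gives \cref{eq:integral-radical-count}.

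The only delicate step I anticipate is the $Q_0$ part. The claim that $b^\ell$ vanishes modulo $Q_0$ requires $\ell$ beyond the largest $p$-adic valuation of $Q$, so the description is only eventual, which is why the statement says "sufficiently large solutions." One must also check that small-$\ell$ coincidences cannot produce infinitely many solutions when $Q_0\nmid r$; they cannot, since there are only finitely many $\ell<\ell^\ast$.
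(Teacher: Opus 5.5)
Your proposal is correct and follows essentially the same route as the paper's proof. Both arguments identify $M_r$ as an integer, so the floor is exact, and reduce the branch to $PM_rb^\ell\equiv r\pmod Q$. Both then split $Q=Q_0Q_1$, use that $b^\ell$ eventually vanishes modulo $Q_0$, reduce the $Q_1$ part to membership in $\langle b\rangle$ (giving a single class of $\ell$ modulo $T$), and count the resulting geometric progression. Your version is only slightly more explicit than the paper's, namely about the threshold $\ell^\ast$ and about why a failed condition leaves only finitely many $\ell$.
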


\begin{proof}
Valuations show that $b^{r/Q}$ is rational exactly when
$Q\mid e_pr$ for every $p\mid b$, equivalently $Q\mid gr$; being a rational
algebraic integer, it is then an integer.  On the branch $n=Q\ell+r$ the
parametrization becomes
\[
  m=M_rb^\ell,
  \qquad PM_rb^\ell\equiv r\pmod Q.
\]
For sufficiently large $\ell$, the left side vanishes modulo $Q_0$, so the
$Q_0$ component is soluble exactly when $Q_0\mid r$.  Modulo $Q_1$, all of
$P,M_r,b$ are units.  In particular
$\gcd(PM_r,Q_1)=1$, so $(PM_r)^{-1}\pmod {Q_1}$ exists, and the congruence
is equivalent to
\[
  b^\ell\equiv r(PM_r)^{-1}\pmod{Q_1}.
\]
It is soluble precisely under the subgroup condition in
\cref{eq:integral-radical-criterion}; if soluble, its solutions form one
class modulo $T$.  The remaining inequality $PM_rb^\ell\ge Q\ell+r$ is
automatic eventually.  Counting the resulting geometric progression proves
\cref{eq:integral-radical-count}.
\end{proof}

\begin{corollary}[A general rational-slope infinitude criterion]
\label{cor:rational-slope-smooth-denominator}
If every prime divisor of $Q$ divides $b$, equivalently
$\operatorname{rad}(Q)\mid b$, then $\Sset_{c,b}$ is infinite.  In fact,
every sufficiently large $m=b^\ell$ is a solution, and
\[
  \log_b X+O_{c,b}(1)
  \le A_{c,b}(X)
  \le Q\log_b(X+1)+O_{c,b}(1).
\]
\end{corollary}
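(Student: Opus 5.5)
The corollary follows from \cref{thm:rational-slope-reduction,thm:integral-radical-branches}, applied to the single residue $r=0$. The hypothesis $\operatorname{rad}(Q)\mid b$ says exactly that $Q$ has no prime factor coprime to $b$. In the notation of \cref{thm:integral-radical-branches} this gives $Q_0=Q$, $Q_1=1$ and $T=1$.

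\textbf{The branch $r=0$.} Take $r=0$. Then $Q\mid g\cdot0$ holds trivially, and $M_0=b^{0}=1$. The criterion \cref{eq:integral-radical-criterion} reduces to $Q_0\mid 0$, which is true, and its second condition is void because $Q_1=1$. So the branch $n=Q\ell$ contains infinitely many solutions, namely all sufficiently large $m=b^\ell$ (the residue condition modulo $T=1$ is empty).

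\textbf{Direct check.} As a sanity check I would also verify this straight from \cref{eq:rational-slope-parametrization}. With $n=Q\ell$, we have $\lfloor b^{n/Q}\rfloor=b^\ell$. The congruence $Q\ell\equiv Pb^\ell\pmod Q$ reduces to $Q\mid Pb^\ell$. This holds once $\ell\ge\max_{p\mid Q}v_p(Q)$, since every prime of $Q$ divides $b$. The inequality $Pb^\ell\ge Q\ell$ holds for all large $\ell$ by exponential growth.

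\textbf{Counting bounds.} For the lower bound, the hits $m=b^\ell\le X$ with $\ell\ge\ell_0$ number $\lfloor\log_bX\rfloor-\ell_0+1=\log_bX+O_{c,b}(1)$, so $A_{c,b}(X)\ge\log_bX+O_{c,b}(1)$. The upper bound $A_{c,b}(X)\le Q\log_b(X+1)+O_{c,b}(1)$ is exactly \cref{eq:rational-slope-upper-count}.

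No step is genuinely hard. The only point needing care is that the congruence $Q\mid Pb^\ell$ holds only from some $\ell_0$ onward, depending on the prime valuations of $Q$. Since the statement says ``sufficiently large'' and absorbs $\ell_0$ into $O_{c,b}(1)$, this causes no difficulty.
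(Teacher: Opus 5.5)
Your proposal is correct and follows the paper's proof: you use $Q_1=1$, apply \cref{thm:integral-radical-branches} on the branch $r=0$ with $M_0=1$ to get the geometric family $b^\ell$ and the lower bound, and quote \cref{eq:rational-slope-upper-count} for the upper bound. Your direct check from \cref{eq:rational-slope-parametrization} is an accurate expansion of the paper's remark that equivalently $Q\mid b^\ell$ for all sufficiently large $\ell$.
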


\begin{proof}
The hypothesis gives $Q_1=1$.  Apply
\cref{thm:integral-radical-branches} with $r=0$ and $M_0=1$; equivalently,
$Q\mid b^\ell$ for all sufficiently large $\ell$.  The lower bound follows
from that geometric branch, and the upper bound is
\cref{eq:rational-slope-upper-count}.
\end{proof}

\begin{corollary}[An integral branch beyond the radical criterion]
\label{cor:radical-criterion-strict-example}
The condition $\operatorname{rad}(Q)\mid b$ is not necessary.  For example,
\[
  b=9,
  \qquad c=3^{1/3}=9^{1/6}
\]
has $\operatorname{rad}(6)=6\nmid9$, but
\begin{equation}\label{eq:ninth-root-explicit-family}
  \boxed{3\cdot9^\ell\in\Sset_{3^{1/3},9}\qquad(\ell\ge0).}
\end{equation}
\end{corollary}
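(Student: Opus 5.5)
Two things must be verified: that the parameters fit the integral radical branch framework, and that $m=3\cdot 9^\ell$ is a hit for every $\ell\ge0$. Here $\log_9 3^{1/3}=\tfrac13\log_9 3=\tfrac16$, so $P=1$ and $Q=6$. Then $\operatorname{rad}(Q)=6\nmid 9$, and \cref{cor:rational-slope-smooth-denominator} does not apply.

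The main step is a direct application of \cref{thm:rational-slope-reduction}. We need $n\ge0$ with $\lfloor 9^{n/6}\rfloor=3\cdot9^\ell$, $n\equiv P\cdot 3\cdot 9^\ell\pmod 6$, and $3\cdot 9^\ell\ge n$. Since $3\cdot9^\ell=9^{\ell+1/2}=9^{(6\ell+3)/6}$, the natural choice is $n=6\ell+3$, corresponding to the residue $r=3$. This $r$ satisfies $Q\mid gr$ with $g=2$, because $6\mid 6$, and it gives $M_3=9^{1/2}=3$. The floor is then exact, since $9^{n/6}=3\cdot9^\ell$ is an integer.

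The congruence needs $6\ell+3\equiv 3\cdot9^\ell\pmod 6$. Because $9^\ell$ is odd, $3\cdot9^\ell\equiv 3\pmod 6$, and $6\ell+3\equiv3\pmod 6$ as well.

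The inequality $3\cdot9^\ell\ge 6\ell+3$ holds at $\ell=0$ with equality ($3\ge3$). For $\ell\ge1$ it follows by induction: the left side grows by a factor of $9$ while the right side grows by only $6$.

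As a sanity check, I will verify directly that $k=(Pm-n)/Q=(3\cdot9^\ell-6\ell-3)/6$ is a nonnegative integer. It is an integer by the congruence above and nonnegative by the inequality. Then $c^m=9^{m/6}=9^k\cdot 9^{n/6}=9^k m\in[m9^k,(m+1)9^k)$. This is the left-endpoint case, which is allowed because the defining interval in \cref{eq:def-S} is closed on the left. For example, $\ell=0$ gives $m=3$, $k=0$, and $(3^{1/3})^3=3$.

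There is no real obstacle here. The only point needing care is the boundary case $\ell=0$, where $Pm=n$ and hence $k=0$; this is exactly why the inequality must be non-strict.

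As an alternative route, one could invoke \cref{thm:integral-radical-branches} with $Q_0=1$, $Q_1=6$, and $r=3$. But $9$ is not a unit modulo $6$, so that theorem's hypotheses do not literally apply. This is why I would use the direct verification through \cref{thm:rational-slope-reduction}, and I would note that the example therefore lies outside both earlier criteria.
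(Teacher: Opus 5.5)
Your direct verification is correct and is essentially the paper's second argument. The paper also takes $m=3\cdot9^\ell$ and $k=(9^\ell-2\ell-1)/2$, which is your $(Pm-n)/Q$ with $n=6\ell+3$. It then observes that $(3^{1/3})^m=m9^k$ is an admissible left-endpoint hit. Your check $3\cdot9^\ell\ge6\ell+3$ is what secures this for every $\ell\ge0$, not only for sufficiently large $\ell$.

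Your closing remark, however, is wrong and should be removed. In \cref{thm:integral-radical-branches}, $Q_0=\prod_{p\mid b}p^{v_p(Q)}$ is the part of $Q$ supported on the primes dividing $b$. For $b=9$ and $Q=6$ this gives $Q_0=3$ and $Q_1=2$, not $Q_0=1$ and $Q_1=6$. Your choice would even violate the theorem's defining requirement $(Q_1,b)=1$, since $(6,9)=3$.

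With the correct factorization, the theorem applies directly:
\begin{itemize}
  \item modulo $Q_1=2$ everything is a unit, and $T=\operatorname{ord}_2(9)=1$;
  \item $Q_0=3$ divides $r=3$;
  \item $r(PM_r)^{-1}\equiv1\in\langle9\rangle\pmod 2$.
\end{itemize}
The factor $3$ of $Q$ is handled by the condition $Q_0\mid r$, not by invertibility of $9$ modulo $6$. This application of \cref{thm:integral-radical-branches} is the paper's primary one-line proof.

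So the example lies inside the integral-radical-branch theorem and outside only the radical criterion of \cref{cor:rational-slope-smooth-denominator}. That separation is exactly what the corollary's title records. The mistake does not affect your proof of the boxed family, but your claim that the example escapes both earlier criteria is false.
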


\begin{proof}
Here $P=1,Q=6$, $g=2$, $Q_0=3$, and $Q_1=2$.  Taking $r=3$ gives
$M_r=9^{1/2}=3$, and $3\cdot9^\ell\equiv3\pmod6$ for every $\ell$.
Alternatively, for
\[
  m=3\cdot9^\ell,
  \qquad k=\frac{9^\ell-(2\ell+1)}2\in\mathbb Z_{\ge0},
\]
one has $(3^{1/3})^m=3^{9^\ell}=m9^k$, which is an allowed left-endpoint
prefix hit.
\end{proof}

\begin{remark}
Theorem~\ref{thm:integral-radical-branches} exhausts every residue branch for
which $b^{r/Q}$ is an integer.  On a remaining nonintegral branch,
\cref{eq:rational-residue-orbit} can encode the orbit of a specified
algebraic irrational under multiplication by $b$.  No general normality or
density theorem is available for such a prescribed algebraic number, so the
reduction alone does not justify a blanket finiteness or infinitude claim.
The next subsection gives a complete answer in the more rigid case where
both $c$ and $b$ are integers and multiplicatively dependent.
\end{remark}

\subsection{The multiplicatively dependent integer case}

\begin{theorem}[Complete dependent classification]\label{thm:dependent}
Let $c,b\ge2$ be integers and suppose
\[
  \log_b c=\frac pq,\qquad p,q\ge1,\qquad (p,q)=1.
\]
There is a unique integer $d\ge2$ such that $c=d^p$ and $b=d^q$, and
\begin{equation}\label{eq:dependent-set}
  \boxed{
  \Sset_{c,b}=\{d^n:n\ge0,\ n\equiv p d^n\pmod q\}.}
\end{equation}
Moreover, the congruence in \cref{eq:dependent-set} is eventually periodic
and has exactly a proportion $1/q$ of solutions in every full eventual
period.  In particular,
\begin{equation}\label{eq:dependent-count}
  A_{c,b}(X)=\frac{\log X}{q\log d}+O_{c,b}(1)
  =\log_bX+O_{c,b}(1),
\end{equation}
so $\Sset_{c,b}$ is infinite.
\end{theorem}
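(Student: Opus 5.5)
The plan is to derive \cref{thm:dependent} as the special case $c,b\in\mathbb Z$ of \cref{thm:rational-slope-reduction,thm:integral-radical-branches}, after first identifying $d$. Since $\log_b c=p/q$ in lowest terms, $c^q=b^p$. For every prime $r$ this gives $qv_r(c)=pv_r(b)$. Because $(p,q)=1$, we get $p\mid v_r(b)$ and $q\mid v_r(c)$, with $v_r(c)/p=v_r(b)/q$. Set $d=\prod_r r^{v_r(b)/q}$. Then $b=d^q$ and $c=d^p$, and $d\ge2$ because $b\ge2$. Uniqueness is immediate: $d=b^{1/q}$ is determined as a positive real number.

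Next, apply \cref{eq:rational-slope-parametrization} with $P=p$ and $Q=q$. Here $b^{n/Q}=d^n$ is always an integer, so the floor disappears and
\[
\Sset_{c,b}=\{d^n:\ n\ge0,\ n\equiv pd^n\pmod q,\ pd^n\ge n\}.
\]
The side condition $pd^n\ge n$ holds for every $n\ge0$, since $d^n\ge 2^n>n$. This gives \cref{eq:dependent-set}. One could also verify it directly: $c^m=d^{pm}$ lies in $[mb^k,(m+1)b^k)$ exactly when $d^{pm-qk}\in[m,m+1)$. The map $n\mapsto d^n$ is injective, so counting elements of the set is the same as counting admissible exponents $n$.

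For periodicity and density, split $q=Q_0Q_1$ as in \cref{thm:integral-radical-branches}, where $Q_0$ collects the primes dividing $d$ (equivalently, dividing $b$). For $n$ large, $d^n\equiv0\pmod{Q_0}$. Choose the period $\Pi=\operatorname{lcm}(q,\operatorname{ord}_{Q_1}(d))$. Then, for $n$ large, both $n\bmod q$ and $pd^n\bmod q$ are $\Pi$-periodic, so the congruence is eventually periodic.

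The main obstacle is proving that exactly a $1/q$ proportion of $n$ in each full period is admissible. I would argue by the Chinese remainder theorem. Write $\Pi=q\cdot t$ and $n=n_0+q\ell$ with $0\le\ell<t$ (here $T'=\operatorname{ord}_{Q_1}(d)$ divides $\Pi$). The map $n\mapsto (n\bmod q,\ n\bmod T')$ is onto $\mathbb Z/q\times\mathbb Z/T'$ over a full period, with uniform fibres. On each fibre, the value $pd^n\bmod q$ is fixed: modulo $Q_1$ it depends only on $n\bmod T'$, and modulo $Q_0$ it is $0$. The condition $n\equiv pd^n\pmod q$ then fixes $n\bmod q$ once $n\bmod T'$ is chosen. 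This works provided the CRT pairing is free, i.e.\ $\gcd(q,T')$ does not obstruct. When $\gcd(q,T')>1$, I would instead count, for each residue $\lambda\bmod T'$, the $n\equiv\lambda\pmod{T'}$ in a period of length $\Pi$ with $n\equiv a_\lambda\pmod q$. There are $\Pi/\operatorname{lcm}(q,T')$ such $n$ if $a_\lambda\equiv\lambda\pmod{\gcd(q,T')}$, and none otherwise. The compatibility condition needs $p d^{\lambda}\equiv\lambda\pmod{\gcd}$, which is exactly where the care lies. If this uniform count fails, the fallback is to state only that the density is positive, but the stated $1/q$ is what is needed for the constant in \cref{eq:dependent-count}.

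Granting density $1/q$, counting $n$ with $d^n\le X$ gives $\frac1q\log_dX+O(1)=\log_bX+O(1)$. Infinitude follows.
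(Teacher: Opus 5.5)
Your identification of $d$ is correct except for one slip: from $qv_r(c)=pv_r(b)$ and $(p,q)=1$ one gets $q\mid v_r(b)$ and $p\mid v_r(c)$, not the reverse. Your definition of $d$ in fact uses the correct version. Deriving the set identity from the rational-slope reduction, rather than directly as the paper does, is harmless. Your period $\operatorname{lcm}(q,\operatorname{ord}_{Q_1}(d))$ is the paper's $P$.

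The genuine gap is the exact $1/q$ count, which you flag but do not close. Your CRT reduction is correct as far as it goes. With $T'=\operatorname{ord}_{Q_1}(d)$ and $g=\gcd(q,T')$, the number of admissible $n$ in a full period equals the number of residues $\lambda\bmod T'$ satisfying $\lambda\equiv pd^\lambda\pmod g$, evaluated at representatives beyond the nilpotence threshold. You need this number to be $T'/g=\Pi/q$, but you leave it open and propose retreating to ``positive density''. That retreat does not give the constant in $A_{c,b}(X)=\log_bX+O(1)$. Nothing you wrote proves positivity either: without the count you have not excluded that no $\lambda$ is compatible, so infinitude is also unproved. The case $g>1$ really occurs. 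For example, $c=2$, $b=64$ gives $q=6$, $T'=2$, $g=2$.

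The missing idea is that the compatibility condition is the same counting problem at the smaller modulus $g$, so strong induction on the modulus closes the argument.
\begin{itemize}
\item First, $g<q$. If $Q_1=1$ then $T'=1$; otherwise $T'\le\varphi(Q_1)<Q_1\le q$.
\item Second, $T'$ is a full period at modulus $g$. Since $g\mid T'$, and the part of $g$ coprime to $d$ divides $Q_1$, the order of $d$ modulo that part divides $T'$. Hence $T'$ is a multiple of the eventual period at modulus $g$, and the nilpotence threshold at modulus $g$ is no larger than at $q$.
\end{itemize}
So prove by strong induction on $q$ (base $q=1$) the statement: in every window of one full eventual period beyond the threshold, $n\equiv pd^n\pmod q$ holds for exactly $P/q$ values. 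The induction hypothesis at modulus $g$ then gives exactly $T'/g$ compatible $\lambda$. Your reduction again produces target residue $0$, so you never need a general target.

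This is precisely the paper's proof. It writes $n=N+r+jt$ and uses $H(N+r+jt)\equiv H(N+r)+jt\pmod q$ to reduce to the condition $H(N+r)\equiv y\pmod{g}$ with $g=(q,t)$. It then inducts, carrying a general target $y$.
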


\begin{proof}
The identity $c^q=b^p$ and unique factorization imply
$c=d^p$, $b=d^q$ for a unique integer $d\ge2$.  If
$m\in\Sset_{c,b}$, then for some $k\in\mathbb Z_{\ge0}$,
\[
  m\le d^{pm-qk}<m+1.
\]
Put $n=pm-qk\in\mathbb Z$.  The lower inequality forces $n\ge0$; hence the
middle term is an integer and must equal $m$.  Thus $m=d^n$ and
$n\equiv pd^n\pmod q$.  Conversely, this congruence makes
$k=(pd^n-n)/q$ an integer.  Since $d^n\ge n$ and $p\ge1$, it is
nonnegative, and $c^{d^n}=d^n b^k$.

For the periodic statement, define
\[
  q_{\rm nil}=\prod_{\substack{\ell^e\parallel q\\ \ell\mid d}}\ell^e,
  \qquad q_{\rm unit}=q/q_{\rm nil},
\]
and let $\nu$ be the least integer such that
$d^n\equiv0\pmod{q_{\rm nil}}$ for every $n\ge\nu$.  Put
\[
  t=\begin{cases}
    1,&q_{\rm unit}=1,\\
    \operatorname{ord}_{q_{\rm unit}}(d),&q_{\rm unit}>1,
  \end{cases}
  \qquad P=\operatorname{lcm}(q,t).
\]
Then $H(n)=n-pd^n\pmod q$ has period $P$ for $n\ge\nu$.  We prove by
strong induction on $q$ that, for every $N\ge\nu$ and every $y\pmod q$,
\[
  \#\{N\le n<N+P:H(n)\equiv y\pmod q\}=P/q.
\]
The assertion is trivial for $q=1$.  If $q>1$, then $t<q$: this is clear
when $q_{\rm unit}=1$, and otherwise
$t\le\varphi(q_{\rm unit})<q_{\rm unit}\le q$.  Let
$g=(q,t)<q$.  Every integer in $[N,N+P)$ is represented uniquely as
\[
  n=N+r+jt,\qquad 0\le r<t,\qquad 0\le j<q/g.
\]
For fixed $r$, nilpotence modulo $q_{\rm nil}$ and periodicity modulo
$q_{\rm unit}$ give
\[
  H(N+r+jt)\equiv H(N+r)+jt\pmod q.
\]
As $j$ varies, $jt$ runs once through the subgroup
$g\mathbb Z/q\mathbb Z$.  Hence the congruence to $y$ has exactly one
solution $j$ if $H(N+r)\equiv y\pmod g$, and none otherwise.

Apply the induction hypothesis to the same expression reduced modulo $g$.
To justify the period used here, note first that $g=(q,t)$ divides $t$.
The unit part of $g$ divides the unit part $q_{\rm unit}$ of $q$, so the
multiplicative order of $d$ modulo that unit part divides
$t=\operatorname{ord}_{q_{\rm unit}}(d)$ (with the evident convention when
$q_{\rm unit}=1$).  The nilpotent part of $g$ also divides $g\mid t$.
Thus the eventual period supplied by the induction hypothesis may be taken
as
\[
  P_g=\operatorname{lcm}(g,t_g),
  \qquad t_g\mid t,
\]
and indeed $P_g\mid t$.  Its nilpotence threshold is at most $\nu$.
Consequently, among the $t$
successive values $N+r$ exactly $t/g$ satisfy
$H(N+r)\equiv y\pmod g$.  Thus exactly $t/g=P/q$ pairs $(r,j)$ contribute,
completing the induction.  In particular, zero occurs exactly $P/q$ times
per eventual period.  Counting those $n\le\log_dX$, with the finite initial
segment absorbed into $O_{c,b}(1)$, proves \cref{eq:dependent-count}.
\end{proof}

\subsection{The independent case}

\begin{proposition}[Uniform distribution]\label{prop:ud}
If $\alpha=\log_b c$ is irrational, then
\[
  \bigl\{m\alpha-\log_bm\bigr\}_{m\ge1}
\]
is uniformly distributed modulo one.  Hence $\Sset_{c,b}$ has natural
density and logarithmic density zero.
\end{proposition}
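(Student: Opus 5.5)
The plan has two parts: establish uniform distribution of $u_m=m\alpha-\log_bm$ by the van der Corput difference theorem, then deduce the two density statements from the shrinking-target criterion \cref{eq:shrinking-target-intro}.

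\emph{Uniform distribution.} For a fixed positive integer $h$, the difference sequence is
\[
  u_{m+h}-u_m=h\alpha-\log_b\left(1+\frac hm\right).
\]
As $m\to\infty$ it converges to $h\alpha$. Since $\alpha$ is irrational, $h\alpha$ is irrational. A sequence of the form $h\alpha+o(1)$ is uniformly distributed modulo one, because adding a vanishing perturbation does not change the Weyl sums in the limit. So every difference sequence is uniformly distributed, and van der Corput's theorem gives the uniform distribution of $(u_m)$.

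An alternative route avoids van der Corput. By Fejér's theorem, $\log_bm$ is a slowly varying sequence: its increments tend to zero. On blocks $[M,M+H)$ with $H=o(M)$ but $H\to\infty$, the sequence $\log_bm$ is nearly constant, so $u_m$ is nearly $m\alpha$ shifted by a constant. The Weyl sums then vanish blockwise by the irrationality of $\alpha$, and summing over blocks gives the same conclusion. Either route is routine; I would present the van der Corput version as the shortest.

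\emph{Density zero.} By \cref{eq:shrinking-target-intro}, membership $m\in\Sset_{c,b}$ implies $\{u_m\}<\delta_b(m)$. Fix $\varepsilon\in(0,1)$. For all $m\ge m_\varepsilon$ we have $\delta_b(m)<\varepsilon$, so
\[
  A_{c,b}(N)\le m_\varepsilon+\#\{m\le N:\{u_m\}\in[0,\varepsilon)\}=\varepsilon N+o(N).
\]
Letting $\varepsilon\downarrow0$ shows that the natural density exists and equals zero. Logarithmic density zero then follows by partial summation, since natural density zero implies $\sum_{m\le N,\,m\in\Sset}1/m=o(\log N)$.

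\emph{Case $c<2$.} The condition $F_{c,b}(m)\ge0$ is only an extra constraint in \cref{eq:shrinking-target-intro}. It can only shrink the set, so the upper bound above still applies and nothing further is needed.

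\emph{Main obstacle.} None of these steps is a real obstacle. The only point needing care is stating the perturbation lemma cleanly: if $v_m\to\beta$ with $\beta$ irrational, then $(m\beta+w_m)$ with $w_m\to0$ is uniformly distributed modulo one. This is what the van der Corput step actually uses, since $u_{m+h}-u_m$ is a convergent sequence rather than a linear one. So the lemma should be stated as: a sequence converging to an irrational constant, when summed in the form $m h\alpha+w_m$ with $w_m\to0$, is uniformly distributed. Applied to the difference sequence, whose terms are $h\alpha-\log_b(1+h/m)$, the correction $\log_b(1+h/m)$ is $O(h/m)$. Its accumulated contribution to the Weyl sums up to $N$ is therefore $O(h\log N)$, which is $o(N)$. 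This also settles the lemma directly via the Weyl sums.
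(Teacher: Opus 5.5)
Your van der Corput route has a genuine gap. The difference theorem requires $(u_{m+h}-u_m)_{m\ge1}$ to be uniformly distributed for every $h\ge1$. Here, however, $u_{m+h}-u_m=h\alpha-\log_b(1+h/m)\to h\alpha$, so these fractional parts accumulate at the single point $\{h\alpha\}$. Concretely, for every frequency $k\ne0$, $\frac1N\sum_{m\le N}\exp\bigl(2\pi ik(u_{m+h}-u_m)\bigr)\to\exp(2\pi ikh\alpha)$, which has modulus one.

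Your claim that a sequence $h\alpha+o(1)$ is uniformly distributed is therefore false. You have conflated the constant $h\alpha$ with the linear sequence $mh\alpha$. The perturbation lemma for $m\beta+w_m$ in your last paragraph is not about the difference sequence at all. Your $O(h\log N)$ estimate only shows that its Weyl sums are close to those of a constant, and those have size $N$. The same obstruction already prevents the difference theorem from proving that $m\alpha$ itself is equidistributed.

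What can be salvaged is the van der Corput \emph{inequality}. The correlations $\frac1N\sum_m\exp\bigl(2\pi ik(u_{m+r}-u_m)\bigr)$ tend to $\exp(2\pi ikr\alpha)$, so for every $R\ge1$ the inequality yields
\[
  \limsup_{N\to\infty}\Bigl|\frac1N\sum_{m\le N}\exp(2\pi iku_m)\Bigr|^2
  \le\frac1{R^2}\Bigl|\sum_{r=0}^{R-1}\exp(2\pi ikr\alpha)\Bigr|^2 .
\]
The right side tends to $0$ as $R\to\infty$ because $k\alpha\notin\mathbb Z$. That is a different argument from the one you wrote.

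Your alternative block argument is correct. On $[M,M+H)$ with $H=o(M)$ and $H\to\infty$, the block Weyl sum equals a unimodular multiple of a geometric sum of modulus at most $1/(2\|k\alpha\|)$, up to an error $O(kH^2/M)$; both are $o(H)$. It is a discretized form of the paper's proof, which applies Weyl's criterion directly. With $z=\exp(2\pi ik\alpha)\ne1$ and $\tau=2\pi k/\log b$, one has $\exp\bigl(2\pi ikF_{c,b}(m)\bigr)=z^mm^{-i\tau}$. Abel summation against the bounded partial sums of $z^m$, using $|m^{-i\tau}-(m+1)^{-i\tau}|\le\tau/m$, then gives the bound $O(\log N)$ at once. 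That is the version to present.

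Your deduction of zero natural and logarithmic density from the shrinking-target criterion is correct and coincides with the paper's. This includes your observation that the condition $F_{c,b}(m)\ge0$ only shrinks the set when $c<2$.
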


\begin{proof}
Fix $h\in\mathbb Z\setminus\{0\}$ and set
$z=\exp(2\pi i h\alpha)\ne1$ and
$\tau=2\pi h/\log b$.  Abel summation, applied to the bounded geometric
partial sums of $z^m$ and the weight $m^{-i\tau}$, gives
\[
  \sum_{m\le N}\exp\bigl(2\pi i hF_{c,b}(m)\bigr)=O_{h,c,b}(\log N)=o(N).
\]
Weyl's criterion~\cite{kuipersniederreiter1974} proves uniform distribution.
Since the target lengths $\delta_b(m)$ tend to zero, every sufficiently
large solution lies in any prescribed fixed interval $[0,\varepsilon)$;
letting $\varepsilon\downarrow0$ gives zero natural density.  Partial
summation then gives zero logarithmic density: if
$A(t)=\#(\Sset_{c,b}\cap[1,t])=o(t)$, then
\[
  \sum_{\substack{m\le X\\m\in\Sset_{c,b}}}\frac1m
  =\frac{A(X)}X+\int_1^X\frac{A(t)}{t^2}\,dt
  =o(\log X).
\]
\end{proof}

\begin{theorem}[Fixed differences]\label{thm:fixed-difference}
Assume $\alpha$ is irrational.  For every fixed $h\ge1$, put
\[
  \beta_h=\{h\alpha\},\qquad
  B_h=\frac{h+1}{b^{\beta_h}-1}.
\]
If $m,m+h\in\Sset_{c,b}$, then $m<B_h$.  Thus every fixed nonzero
difference occurs only finitely often; in particular, if the solution set is
infinite, its consecutive gaps tend to infinity.
\end{theorem}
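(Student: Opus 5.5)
Write $x=F_{c,b}(m)$ and $x'=F_{c,b}(m+h)$, and work with \cref{prop:criterion}. If both $m$ and $m+h$ are hits, then $\{x\}<\delta_b(m)$ and $\{x'\}<\delta_b(m+h)$. The difference is exact:
\[
  x'-x=h\alpha-\log_b\Bigl(1+\frac hm\Bigr).
\]
Reducing modulo one, and using that both fractional parts lie in short arcs next to $0$, we get
\[
  h\alpha-\log_b(1+h/m)\equiv \{x'\}-\{x\}\pmod 1,
  \qquad
  -\delta_b(m)<\{x'\}-\{x\}<\delta_b(m+h).
\]
Hence there is an integer $n$ with
\[
  h\alpha-n-\log_b\Bigl(1+\frac hm\Bigr)\in\bigl(-\delta_b(m),\,\delta_b(m+h)\bigr).
\]
For $F\ge0$ we must also check that the floors of $x$ and $x'$ cancel properly. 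This is harmless, because we only use congruence modulo one, and $F\ge0$ is automatic for $c\ge2$ and holds for hits in general.

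The key step is to pin down $n$. Write $h\alpha=\lfloor h\alpha\rfloor+\beta_h$. Irrationality of $\alpha$ gives $0<\beta_h<1$. The logarithmic term $\ell:=\log_b(1+h/m)$ lies in $(0,\log_b(1+h)]$, which may exceed $1$ when $h$ is large, so several values of $n$ are possible. Rewrite the displayed condition as
\[
  \ell\in\bigl(\beta_h+j-\delta_b(m+h),\ \beta_h+j+\delta_b(m)\bigr)
  \quad\text{for some integer } j .
\]
Since $\ell>0$ and $\delta_b\le1$, we want to show $j\ge0$. The case $j\le-1$ requires $\ell<\beta_h-1+\delta_b(m)<\delta_b(m)=\log_b(1+1/m)$. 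That forces $h<1$, which is impossible. So $j\ge0$, and therefore
\[
  \log_b\Bigl(1+\frac hm\Bigr)+\delta_b(m+h)>\beta_h .
\]

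Now combine the logarithms:
\[
  \log_b\Bigl(1+\frac hm\Bigr)+\log_b\Bigl(1+\frac1{m+h}\Bigr)
  =\log_b\frac{(m+h)(m+h+1)}{m(m+h)}
  =\log_b\frac{m+h+1}{m}.
\]
So the condition becomes $(m+h+1)/m>b^{\beta_h}$, which is $m(b^{\beta_h}-1)<h+1$, that is, $m<B_h$. This is exactly the claimed bound.

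The finiteness conclusions follow directly. For fixed $h$ only finitely many $m$ are allowed. If the solution set is infinite, then for any $H$ only finitely many consecutive gaps are at most $H$, since there are finitely many differences $h\le H$, each occurring finitely often. So the consecutive gaps tend to infinity.

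The main obstacle is the careful bookkeeping of the half-open endpoints and the sign cases for $j$. That is where I would be most careful, especially the boundary cases $\{x\}=0$. The telescoping identity makes the final bound clean once $j\ge0$ is established.
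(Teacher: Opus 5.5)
Your proof is correct and is essentially the paper's argument. The paper writes $F_{c,b}(u)=j_u+\theta_u$ and subtracts the two relations. It observes that $h\alpha-(j_{m+h}-j_m)$ is positive, because $h\ge1$ gives $\log_b(1+h/m)\ge\delta_b(m)$, and congruent to $\beta_h$ modulo one, hence at least $\beta_h$. It then telescopes $\log_b(1+h/m)+\delta_b(m+h)=\log_b\frac{m+h+1}{m}$ exactly as you do, and your exclusion of $j\le-1$ is that same positivity step phrased through the integer shift.
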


\begin{proof}
For a solution $u$, write
$F_{c,b}(u)=j_u+\theta_u$ with
$0\le\theta_u<\delta_b(u)$.  Subtracting the relations for $m+h$ and $m$
gives
\[
  h\alpha-(j_{m+h}-j_m)
  =\log_b\left(1+\frac hm\right)+\theta_{m+h}-\theta_m.
\]
The right-hand side is positive and is strictly less than
$\log_b(1+(h+1)/m)$.  The left-hand side is congruent to $\beta_h$ modulo
one, and positivity implies that it is at least $\beta_h$.  Therefore
$\beta_h<\log_b(1+(h+1)/m)$, which is equivalent to $m<B_h$.
\end{proof}

\begin{theorem}[Exact double-hit resonance shells]
\label{thm:resonance-shell}
Assume $\alpha=\log_b c$ is irrational and fix $h\ge1$.  Put
\[
  \beta_h=\{h\alpha\}\in(0,1),
  \qquad t_{h,k}=b^{\beta_h+k}
  \quad(k\in\mathbb Z_{\ge0}).
\]
If $m,m+h\in\Sset_{c,b}$, write
$j_u=\lfloor F_{c,b}(u)\rfloor$ for $u=m,m+h$.  Then
\[
  k=h\alpha-(j_{m+h}-j_m)-\beta_h
\]
is a nonnegative integer and satisfies
\begin{equation}\label{eq:resonance-shell-range}
  t_{h,k}<h+2
\end{equation}
and
\begin{equation}\label{eq:resonance-shell}
  \boxed{
  \frac{h-t_{h,k}}{t_{h,k}-1}
  <m<
  \frac{h+1}{t_{h,k}-1}.}
\end{equation}
The real length of this open shell is exactly
\begin{equation}\label{eq:resonance-shell-width}
  \boxed{
  W_{h,k}=\frac{t_{h,k}+1}{t_{h,k}-1}
  =1+\frac2{t_{h,k}-1}.}
\end{equation}
If, in addition, $h\le(b-1)m-1$, then necessarily $k=0$, and hence
\begin{equation}\label{eq:principal-resonance-shell}
  \boxed{
  \frac{h-b^{\beta_h}}{b^{\beta_h}-1}
  <m<
  \frac{h+1}{b^{\beta_h}-1}.}
\end{equation}
\end{theorem}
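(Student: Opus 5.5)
We refine the proof of \cref{thm:fixed-difference}.  For a hit $u$, \cref{prop:criterion} lets us write $F_{c,b}(u)=j_u+\theta_u$ with $0\le\theta_u<\delta_b(u)$.  Subtracting the relations for $u=m+h$ and $u=m$ gives the exact identity
\[
  h\alpha-(j_{m+h}-j_m)=\log_b\Bigl(1+\frac hm\Bigr)+\theta_{m+h}-\theta_m .
\]
The left side is congruent to $\beta_h$ modulo one, so $k$ is an integer.  On the right, the term $-\theta_m$ is strictly greater than $-\log_b(1+1/m)$, and $\theta_{m+h}\ge0$.  Therefore
\[
  \text{RHS}>\log_b\frac{m+h}{m+1}\ge0 .
\]
This gives $\beta_h+k>0$, hence $k\ge0$, since $0<\beta_h<1$ and $k$ is an integer.

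We next bound the right side from both sides.  For the upper bound, $\theta_{m+h}<\log_b(1+1/(m+h))$ and $\theta_m\ge0$ give
\[
  \beta_h+k<\log_b\frac{m+h+1}{m}.
\]
For the lower bound,
\[
  \beta_h+k>\log_b\frac{m+h}{m+1}.
\]
Exponentiating with $t=t_{h,k}$ turns these into $tm<m+h+1$ and $t(m+1)>m+h$.  Because $t>1$, these are exactly $m<(h+1)/(t-1)$ and $m>(h-t)/(t-1)$, which is \cref{eq:resonance-shell}.  The shell width $(h+1-h+t)/(t-1)=(t+1)/(t-1)$ is immediate.

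For \cref{eq:resonance-shell-range}, we use that the shell must contain $m\ge1$.  Then $1<(h+1)/(t-1)$, which gives $t<h+2$.

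For the principal-shell clause, suppose $k\ge1$.  Then $t\ge b^{\beta_h+1}>b$.  Combined with $m<(h+1)/(t-1)$ this gives $(b-1)m<h+1$, that is, $h>(b-1)m-1$, which contradicts the hypothesis $h\le(b-1)m-1$.  So $k=0$, and substituting $t=b^{\beta_h}$ gives \cref{eq:principal-resonance-shell}.

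The only delicate point is keeping the inequalities strict.  Here the strictness of $\theta_u<\delta_b(u)$ coming from the half-open window is used.  The non-strict lower bound $\theta_{m+h}\ge0$ combines with the strict upper bound on $\theta_m$ to yield strict outer inequalities.  The endpoint $h=(b-1)m-1$ is treated correctly because $t>b$ holds strictly.
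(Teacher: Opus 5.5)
Your proposal is correct and follows essentially the same route as the paper's proof: the same decomposition $F_{c,b}(u)=j_u+\theta_u$, the same two-sided bound $\log_b\frac{m+h}{m+1}<\beta_h+k<\log_b\frac{m+h+1}{m}$, and exponentiation to obtain the shell. The remaining differences are cosmetic. You derive $t_{h,k}<h+2$ from $1\le m<(h+1)/(t_{h,k}-1)$ rather than from $(m+h+1)/m\le h+2$, and you prove the principal-shell clause by contradiction rather than directly. Also, the strictness of $t>b$ is not actually needed there, since the strict inequality $m<(h+1)/(t-1)$ already yields $(b-1)m<h+1$ when $t\ge b$.
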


\begin{proof}
For the two hits write
\[
  F_{c,b}(m)=j_m+\theta_m,
  \qquad
  F_{c,b}(m+h)=j_{m+h}+\theta_{m+h},
\]
where
$0\le\theta_m<\delta_b(m)$ and
$0\le\theta_{m+h}<\delta_b(m+h)$.  Set
\[
  R=\log_b\left(1+\frac hm\right),
  \qquad
  x=h\alpha-(j_{m+h}-j_m).
\]
Subtraction gives $x=R+\theta_{m+h}-\theta_m$.  Keeping both strict
half-open endpoints yields
\begin{equation}\label{eq:resonance-two-sided-log}
  \log_b\frac{m+h}{m+1}
  <x<
  \log_b\frac{m+h+1}{m}.
\end{equation}
The left endpoint is nonnegative and the inequality is strict, so $x>0$.
Moreover $x\equiv\beta_h\pmod1$; consequently there is a unique $k\ge0$
with $x=\beta_h+k$.  Exponentiating
\cref{eq:resonance-two-sided-log} gives
\[
  \frac{m+h}{m+1}<t_{h,k}<\frac{m+h+1}{m}.
\]
Solving the two inequalities for $m$ proves
\cref{eq:resonance-shell}.  Since $m\ge1$, the last right-hand side is at
most $h+2$, proving \cref{eq:resonance-shell-range}; subtracting the shell
endpoints gives \cref{eq:resonance-shell-width}.  Finally,
$h\le(b-1)m-1$ implies $(m+h+1)/m\le b$, hence $t_{h,k}<b$ and $k=0$.
\end{proof}

The old upper bound in \cref{thm:fixed-difference} has scale
$h/\beta_h$ when $\beta_h$ is small.  The principal shell
\cref{eq:principal-resonance-shell} has scale $1/\beta_h$ in width and lies
near the far endpoint of that range, saving a full factor of $h$.

\begin{corollary}[Separation of resonance shells for $b\ge3$]
\label{cor:shell-separation-bge3}
Assume $b\ge3$ and define
\[
  I_{h,k}=\left(
  \frac{h-t_{h,k}}{t_{h,k}-1},
  \frac{h+1}{t_{h,k}-1}
  \right),
  \qquad
  K_h=\{k\ge0:t_{h,k}<h+2\}.
\]
The set $K_h$ is finite.  Whenever $k,k+1\in K_h$, the shell
$I_{h,k+1}$ lies strictly to the left of $I_{h,k}$, with a nonempty gap
between them.  Thus all starting points for difference $h$ lie in at most
\[
  \max\left\{0,
  \left\lceil\log_b(h+2)-\beta_h\right\rceil\right\}
\]
pairwise disjoint open shells.
\end{corollary}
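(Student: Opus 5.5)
The plan is to treat the three assertions in order: finiteness of $K_h$, strict ordering of consecutive shells with a gap, and the count of shells. Write $t_k=t_{h,k}=b^{\beta_h+k}$; then $t_{k+1}=bt_k$. Since $\beta_h>0$, we have $t_k>1$ for every $k\ge0$, and $t_k\to\infty$. So the condition $t_k<h+2$ holds only for finitely many $k$, and $K_h$ is finite.

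\emph{Ordering and gap.} We want the right endpoint of $I_{h,k+1}$ to lie strictly below the left endpoint of $I_{h,k}$, that is,
\[
  \frac{h+1}{bt-1}<\frac{h-t}{t-1},\qquad t=t_k>1.
\]
Clearing the positive denominators, this is equivalent to
\[
  (h+1)(t-1)<(h-t)(bt-1).
\]
I would expand both sides and rearrange into the form
\[
  h\,t(b-1)-2t-bt^2+t+1>0,
\]
checking the algebra at this step. It suffices to show this whenever $k+1\in K_h$, i.e. $bt<h+2$, so $h>bt-2$. The left side is increasing in $h$ because $t(b-1)>0$, so substituting $h=bt-2$ gives a lower bound. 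That lower bound is a quadratic in $t$:
\[
  (bt-2)t(b-1)-bt^2-t+1=t^2\,b(b-2)-t(2b-1)+1.
\]
The main obstacle is showing this quadratic is nonnegative for the relevant $t>1$ when $b\ge3$. At $t=1$ it equals $b^2-4b+2$, which is negative for $b=3$, so one cannot simply plug in $t=1$. Instead I would use $t\ge b^{\beta_h}$ together with $k+1\in K_h$. More carefully, I would argue that the quadratic is positive as soon as $t\ge(2b-1)/(b(b-2))$; this bound is $5/3$ for $b=3$ and smaller for larger $b$. If $t$ can be smaller than that threshold, I would instead exploit the strict slack $h\ge bt-2+\epsilon$, using that $h$ is an integer and $h+2>bt$.

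If that fails, the fallback is to directly compare the inequalities from the proof of \cref{thm:resonance-shell}, namely
\[
  \frac{m+h}{m+1}<t<\frac{m+h+1}{m}.
\]
For a fixed $m$ these intervals in the $t$-variable have ratio below $b$ when $b\ge3$, which forces disjointness. Strictness of all inequalities then yields the nonempty gap.

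\emph{Counting.} The set $K_h$ consists of the integers $k\ge0$ with $k<\log_b(h+2)-\beta_h$. Their number is $\max\{0,\lceil\log_b(h+2)-\beta_h\rceil\}$. By \cref{thm:resonance-shell}, every starting point $m$ of a double hit with difference $h$ lies in some $I_{h,k}$ with $k\in K_h$, and by the previous step these shells are pairwise disjoint. This gives the stated bound.
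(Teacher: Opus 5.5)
Your finiteness argument and the final count are correct. The central computation, however, contains an algebra error, and that error creates the obstacle you then try to work around. One has $(h-t)(bt-1)-(h+1)(t-1)=hbt-h-bt^2+t-(ht-h+t-1)=t\bigl((b-1)h-bt\bigr)+1$. The linear terms in $t$ cancel, so there is no extra $-t$ as in your form $ht(b-1)-2t-bt^2+t+1$.

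Your version is not a harmless strengthening: for $b=3$ it is false in admissible configurations. For example, $(c,b,h)=(31,3,2)$ gives $t=t_{2,0}=961/729\approx1.318$, so $bt\approx3.955<h+2$, yet your expression equals $3t-3t^2+1\approx-0.26$. Since $h$ is already an integer there, no integrality slack can rescue it, and this $t$ lies below your threshold $(2b-1)/(b(b-2))=5/3$. As written, the main route fails exactly in the case $b=3$ that the statement includes.

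With the correct numerator the step is immediate, and this is the paper's argument. If $k+1\in K_h$ then $bt<h+2$, so $(b-1)h-bt>(b-2)h-2\ge0$ for $h\ge2$. The case $h=1$ is vacuous, because $bt<3$ is impossible when $b\ge3$ and $t>1$. Hence $\inf I_{h,k}-\sup I_{h,k+1}=\bigl(t((b-1)h-bt)+1\bigr)/\bigl((t-1)(bt-1)\bigr)>0$. Equivalently, your substitution $h>bt-2$ now gives $b(b-2)t^2-2(b-1)t+1$, which equals $(b-1)(b-3)\ge0$ at $t=1$ and is increasing for $t\ge1$.

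Your fallback through the windows $\frac{m+h}{m+1}<t<\frac{m+h+1}{m}$ is a genuinely different route and can be completed. As stated, though, it only excludes common points $m>1$, since the endpoint ratio $(1+\frac1m)(1+\frac1{m+h})$ is below $3\le b$ only there. To obtain strict separation of the real intervals you must add that $\sup I_{h,k+1}=(h+1)/(bt-1)>1$ whenever $bt<h+2$. Then either an overlap or touching endpoints would place both $t$ and $bt$ in the closed window at some real $m>1$, forcing $b<3$.
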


\begin{proof}
Put $t=t_{h,k}$.  If $k+1\in K_h$, then $bt<h+2$, and direct subtraction
gives
\[
  \inf I_{h,k}-\sup I_{h,k+1}
  =\frac{t((b-1)h-bt)+1}{(t-1)(bt-1)}.
\]
If $h=1$, the condition $bt<h+2=3$ is impossible because $b\ge3$ and
$t>1$.  If $h\ge2$, then
$(b-1)h-bt>(b-2)h-2\ge0$, so the displayed gap is positive.
The count is the number of integers
$0\le k<\log_b(h+2)-\beta_h$.
\end{proof}

\begin{corollary}[A global elementary gap for $\Sset_{2,10}$]
\label{cor:two-global-gap-four}
Distinct elements of $\Sset_{2,10}$ differ by at least $4$.
\end{corollary}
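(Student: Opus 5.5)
The plan is to apply \cref{thm:fixed-difference} directly with $(c,b)=(2,10)$ and the three differences $h=1,2,3$, checking in each case that the bound $B_h$ leaves no admissible starting point $m\ge1$. That theorem needs $\alpha=\log_{10}2$ to be irrational. This holds because $2$ and $10$ are multiplicatively independent ($2^q=10^p$ is impossible for $p\ge1$, by the $5$-adic valuation), so \cref{prop:rational-transcendental-dichotomy} applies. The key observation is that the thresholds $10^{\beta_h}$ are exact integers: $10^{\{h\log_{10}2\}}=2^h$ as long as $2^h<10$, i.e.\ for $h\le3$. No numerical estimate of $\log_{10}2$ is needed beyond $2^h<10$.

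Concretely, for $h=1,2,3$ we have $\beta_h=h\log_{10}2$, since $2^h<10$ means $h\log_{10}2<1$. Hence $10^{\beta_h}=2^h$, and
\[
  B_1=\frac{2}{2-1}=2,\qquad
  B_2=\frac{3}{4-1}=1,\qquad
  B_3=\frac{4}{8-1}=\frac47.
\]
By \cref{thm:fixed-difference}, $m,m+h\in\Sset_{2,10}$ forces $m<B_h$. For $h=2$ and $h=3$ no positive integer $m$ satisfies this, so these differences never occur.

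For $h=1$ the bound leaves only $m=1$. I would exclude it with \cref{cor:two-endpoint-rigidity}: $2^1=(1+1)\cdot10^0$ is the excluded right endpoint, so $1\notin\Sset_{2,10}$. Directly, $2$ does not begin with the digit $1$. Hence the difference $1$ never occurs either, and any two distinct elements differ by at least $4$.

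The only delicate point is the $h=1$ boundary case, where the strict inequality $m<B_1=2$ still permits $m=1$. That case needs the separate endpoint check rather than the shell bound. Everything else is exact integer arithmetic.
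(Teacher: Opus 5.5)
Your proposal is correct and follows essentially the same route as the paper's proof: you apply \cref{thm:fixed-difference} with the exact values $10^{\beta_h}=2,4,8$ for $h=1,2,3$, which gives the bounds $2,1,4/7$. The only survivor, $m=1$ at $h=1$, is then excluded because $2^1$ does not begin with $1$. Your added justifications, namely that $\log_{10}2$ is irrational and that $\beta_h=h\log_{10}2$ because $2^h<10$, are details the paper leaves implicit.
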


\begin{proof}
For $h=1,2,3$ one has
$10^{\{h\log_{10}2\}}=2,4,8$, respectively.  The bound in
\cref{thm:fixed-difference} is then $2,1,4/7$.  Differences $2$ and $3$
are impossible.  Difference $1$ could only start at $m=1$, but $2^1$ does
not begin with $1$.
\end{proof}

\begin{corollary}[Fixed-difference representation bound for $2^m$]
\label{cor:two-difference-representation}
Let
\[
  r_2(h)=\#\{m\ge1:m,m+h\in\Sset_{2,10}\}.
\]
With $K_h$ and $W_{h,k}$ as above,
\begin{equation}\label{eq:two-difference-representation}
  \boxed{
  r_2(h)\le
  \sum_{k\in K_h}\left\lceil\frac{W_{h,k}}4\right\rceil.}
\end{equation}
Every $k\ge1$ shell contributes at most one representation, and
\begin{equation}\label{eq:two-difference-simplified}
  r_2(h)
  \le \#(K_h\setminus\{0\})+
  \left\lceil\frac14\left(1+\frac2{10^{\beta_h}-1}\right)\right\rceil.
\end{equation}
Consequently
\[
  r_2(h)=O\left(\log(h+2)+\frac1{\{h\log_{10}2\}}\right)
\]
with an absolute implied constant.
\end{corollary}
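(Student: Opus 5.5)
The plan is to combine the shell decomposition of \cref{thm:resonance-shell} with the global gap of \cref{cor:two-global-gap-four}.

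\emph{Counting inside one shell.} If $m,m+h\in\Sset_{2,10}$, then \cref{thm:resonance-shell} produces a unique $k\ge0$ with $t_{h,k}<h+2$, so $k\in K_h$, and it places $m$ in the open interval $I_{h,k}$ of length $W_{h,k}$. The starting points $m$ of such pairs are themselves elements of $\Sset_{2,10}$. By \cref{cor:two-global-gap-four}, distinct starting points differ by at least $4$. An open interval of length $W$ contains at most $\lceil W/4\rceil$ real points with pairwise distances at least $4$: if there are $n$ such points, then $4(n-1)<W$, so $n<W/4+1$, which gives $n\le\lceil W/4\rceil$ (the case where $W/4$ is an integer also works because the interval is open). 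Summing over $k\in K_h$ gives \cref{eq:two-difference-representation}.

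\emph{Shells with $k\ge1$.} Here $t_{h,k}=10^{\beta_h+k}>10$, so
\[
W_{h,k}=1+\frac{2}{t_{h,k}-1}<1+\frac29<4.
\]
Hence $\lceil W_{h,k}/4\rceil=1$, and each such shell contributes at most one representation. The $k=0$ shell contributes
\[
\left\lceil\frac14\left(1+\frac{2}{10^{\beta_h}-1}\right)\right\rceil.
\]
Together these give \cref{eq:two-difference-simplified}. No separation of the shells is needed, since we only use an upper bound on a sum over shells.

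\emph{The asymptotic form.} First, $\#K_h\le 1+\log_{10}(h+2)$, because $10^{\beta_h+k}<h+2$ forces $k<\log_{10}(h+2)$. Second, $10^{\beta}-1\ge\beta\log 10$, so the $k=0$ term is at most
\[
1+\frac{1}{2\beta_h\log 10}.
\]
Adding the two gives
\[
r_2(h)=O\left(\log(h+2)+\frac{1}{\{h\log_{10}2\}}\right)
\]
with an absolute constant. Note that $\beta_h>0$ because $\log_{10}2$ is irrational.

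The only delicate point is the point-counting bound in an open interval. Everything else follows directly from the earlier results.
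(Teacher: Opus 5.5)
Your proposal is correct and follows the paper's proof step for step. You localize each starting point in its shell via \cref{thm:resonance-shell}, pack each shell using the global gap $4$ of \cref{cor:two-global-gap-four}, note $W_{h,k}<11/9$ for $k\ge1$, and finish with $10^x-1\ge x\log10$ and $\#K_h=O(\log(h+2))$.

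There is one harmless slip. The $k=0$ term is not always at most $1+\frac{1}{2\beta_h\log10}$: this fails when the bracket lies just above an integer, e.g.\ at $h=406$ the term equals $2$ while the right side is about $1.995$. Using $\lceil y\rceil<y+1$ instead gives the bound $\frac54+\frac{1}{2\beta_h\log10}$, which leaves the $O$-estimate unchanged.
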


\begin{proof}
Starting points belonging to the same shell are elements of
$\Sset_{2,10}$ and hence, by \cref{cor:two-global-gap-four}, are separated
by at least $4$.  An open interval of length $W$ contains at most
$\lceil W/4\rceil$ such points, proving
\cref{eq:two-difference-representation}.  For $k\ge1$ one has
$t_{h,k}>10$ and therefore $W_{h,k}<11/9<4$, which proves
\cref{eq:two-difference-simplified}.  Finally use
$10^x-1\ge x\log10$ and $\#K_h=O(\log(h+2))$.
\end{proof}

\begin{definition}
For an irrational number $\xi$, its irrationality exponent is
\[
  \mu(\xi)=\inf\left\{\nu:
  \begin{array}{c}
    \left|\xi-p/q\right|<q^{-\nu}\text{ holds for only}\\[-1mm]
    \text{finitely many rational numbers }p/q
  \end{array}
  \right\}.
\]
\end{definition}

\begin{theorem}[Power-saving sparsity]\label{thm:power-saving}
Suppose $\mu(\alpha)<\infty$.  For every $\nu>\mu(\alpha)$ there is
$C_{c,b,\nu}>0$ such that any two solutions $m<m+h$ satisfy
\[
  h\ge C_{c,b,\nu}m^{1/\nu}.
\]
Consequently,
\begin{equation}\label{eq:power-saving}
  A_{c,b}(X)\ll_{c,b,\nu}X^{1-1/\nu},
  \qquad
  \sum_{m\in\Sset_{c,b}}\frac1m<\infty.
\end{equation}
If $b\ge2$ is an integer, $c>1$ is a positive real algebraic number, and
$\log_b c$ is irrational, a finite effective choice of $\nu$ follows from
lower bounds for linear forms in logarithms.
\end{theorem}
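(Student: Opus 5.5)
The plan starts from the proof of \cref{thm:fixed-difference} and turns it into a Diophantine statement. Let $m<m+h$ both be solutions and write $F_{c,b}(u)=j_u+\theta_u$ with $0\le\theta_u<\delta_b(u)$. Put $p=j_{m+h}-j_m$. Subtracting the two relations, as in the proof of \cref{thm:resonance-shell}, gives
\[
  0<h\alpha-p=\log_b\Bigl(1+\frac hm\Bigr)+\theta_{m+h}-\theta_m
  <\log_b\Bigl(1+\frac{h+1}m\Bigr)\le\frac{h+1}{m\log b}.
\]
Hence $|\alpha-p/h|<(h+1)/(hm\log b)\le 2/(m\log b)$.

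Next I invoke the irrationality exponent. Fix $\nu>\mu(\alpha)$. By definition only finitely many rationals satisfy $|\alpha-p/q|<q^{-\nu}$, so there is a constant $c_0=c_0(\alpha,\nu)>0$ with $|\alpha-p/q|\ge c_0q^{-\nu}$ for every rational $p/q$ with $q\ge1$. Since $\alpha$ is irrational, none of the finitely many exceptional rationals attains equality, and the constant can be shrunk to absorb them. Applying this with $q=h$ gives $c_0h^{-\nu}<2/(m\log b)$, that is, $h>(c_0\log b/2)^{1/\nu}m^{1/\nu}$. This is the gap inequality. The bound is uniform in $p$, so the fact that the fraction $p/h$ need not be reduced causes no difficulty.

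To obtain the counting bound, list the solutions in $(X/2,X]$ in increasing order. Each is preceded by the previous one at distance at least $C(X/2)^{1/\nu}$. So there are at most $1+X/(C(X/2)^{1/\nu})\ll X^{1-1/\nu}$ of them. Summing over dyadic ranges $X2^{-i}$ gives a geometric series, so $A_{c,b}(X)\ll X^{1-1/\nu}$.

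The reciprocal sum then follows by partial summation:
\[
  \sum_{m\le X}\frac1m\ind_{\Sset}(m)=\frac{A(X)}X+\int_1^X\frac{A(t)}{t^2}\,dt,
\]
and the integral converges because $A(t)/t^2\ll t^{-1-1/\nu}$. Alternatively, each dyadic block contributes $\ll 2^{-i/\nu}$, which is summable.

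For the effective clause, I reuse the Matveev argument from the proof of \cref{prop:two-log-discrepancy}. The quantity $q\alpha-p$ equals $(q\log c-p\log b)/\log b$, a nonzero linear form in the logarithms of the fixed algebraic numbers $c$ and $b$; it is nonzero because $\alpha$ is irrational. Matveev's bound gives $|q\alpha-p|\gg\max(q,|p|)^{-C_0}$. Since $|p|\ll q$, this yields $|\alpha-p/q|\gg q^{-C_0-1}$ with effective constants. Therefore $\nu=C_0+1$ works in the argument above, with an effective constant in place of $c_0$.

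The main obstacle is only bookkeeping. The ineffective constant $c_0$ in the general case has to be obtained from the finiteness of the exceptional set, with care over the non-reduced fractions $p/h$ (handled by the uniformity in $p$ noted above). Once that is in place, the rest is routine.
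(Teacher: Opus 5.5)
Your proof is correct and is essentially the paper's argument. The paper reaches the same inequality $\|h\alpha\|\le h\alpha-p<(h+1)/(m\log b)$ by citing \cref{thm:fixed-difference} together with $b^{\beta_h}-1\ge(\log b)\beta_h\ge(\log b)\|h\alpha\|$; it then converts $\nu>\mu(\alpha)$ into the uniform bound $\|h\alpha\|\gg_\nu h^{1-\nu}$ and finishes with the same dyadic packing, partial summation, and Matveev lower bound. Your only change is that you redo the subtraction of the two hit relations directly instead of passing through $B_h$, which is cosmetic.
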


\begin{proof}
For $\nu>\mu(\alpha)$, the definition of the irrationality exponent gives
$\|h\alpha\|\ge c'_\nu h^{1-\nu}$ for all sufficiently large $h$.
Shrinking $c'_\nu$ to the minimum of the finitely many positive quantities
$\|h\alpha\|h^{\nu-1}$ in the omitted initial range gives a constant
$c_\nu>0$ for which the same inequality holds for every $h\ge1$.  Since
$\beta_h\ge\|h\alpha\|$ and
$b^{\beta_h}-1\ge(\log b)\beta_h$, \cref{thm:fixed-difference} implies
$m\ll_{c,b,\nu}h^\nu$.  This gives the gap bound.  In a dyadic interval
$[M,2M)$, consecutive solutions are separated by
$\gg M^{1/\nu}$, so their number is $O(M^{1-1/\nu})$.  Summing over dyadic
intervals proves the first estimate in \cref{eq:power-saving}; partial
summation proves convergence of the reciprocal sum.

Under the algebraic hypothesis in the theorem, choose $r$ nearest to
$h\alpha$.  Then $|r|\ll_{c,b}h$, and the linear form
$h\log c-r\log b$ has an effective lower bound of the shape
$\gg_{c,b}h^{-C}$ by Matveev's theorem~\cite{matveev2000}.  It is nonzero,
since equality would make $\log_b c=r/h$ rational.  This gives a finite
effective irrationality exponent.
\end{proof}

\subsection{Quantitative resonance chains}

The shell theorem also controls arithmetic progressions of actual hits that
do not start at a floor resonance center.  After at most one exceptional
edge, every such progression is forced into the principal shell.

\begin{theorem}[Tail rigidity and maximal arithmetic hit chains]
\label{thm:arithmetic-hit-chain}
Assume $\alpha=\log_b c$ is irrational.  For $q\ge1$, put
\[
  a_q=\lfloor q\alpha\rfloor,
  \qquad \beta_q=\{q\alpha\},
  \qquad t_q=b^{\beta_q},
\]
and define
\[
  I_q=\left(\frac{q-t_q}{t_q-1},
             \frac{q+1}{t_q-1}\right),
  \qquad
  W_q=|I_q|=\frac{t_q+1}{t_q-1}.
\]
Suppose
\[
  n_j=m+jq\in\Sset_{c,b}
  \qquad(0\le j\le\ell-1),
\]
and let $K_j=\lfloor F_{c,b}(n_j)\rfloor$ be the corresponding radix
scale.  Then
\begin{equation}\label{eq:chain-tail-principal}
  n_j\in I_q,
  \qquad K_{j+1}-K_j=a_q
  \qquad(1\le j\le\ell-2).
\end{equation}
If $\ell\ge3$, then more precisely
\begin{equation}\label{eq:chain-start-window}
  \frac{q-t_q}{t_q-1}<m+q<
  \frac{q+1}{t_q-1}-(\ell-3)q,
\end{equation}
and hence
\begin{equation}\label{eq:chain-length-necessary}
  (\ell-3)q<W_q.
\end{equation}
Consequently the maximal length
\[
  \Lambda_{c,b}(q)=
  \sup\bigl(\{0\}\cup
  \{\ell:m,m+q,\ldots,m+(\ell-1)q\in\Sset_{c,b}
  \text{ for some }m\}\bigr)
\]
satisfies
\begin{equation}\label{eq:maximal-chain-bound}
  \boxed{
  \Lambda_{c,b}(q)
  \le2+\left\lceil\frac{W_q}{q}\right\rceil.}
\end{equation}
If the first term also satisfies $q\le(b-1)m-1$, then the first edge is
principal as well, and the right-hand side in
\cref{eq:maximal-chain-bound} improves to
$1+\lceil W_q/q\rceil$.

Every chain with $\ell\ge4$ and $(\ell-3)q>1$ forces the one-sided
approximation
\begin{equation}\label{eq:long-chain-approximation}
  \boxed{
  \{q\alpha\}<
  \log_b\left(1+\frac2{(\ell-3)q-1}\right).}
\end{equation}
If $\mu(\alpha)<\infty$, then for every $\nu>\mu(\alpha)$,
\begin{equation}\label{eq:maximal-chain-diophantine}
  \boxed{\Lambda_{c,b}(q)\ll_{c,b,\nu}q^{\nu-2}.}
\end{equation}
In particular, $\mu(\alpha)=2$ gives
$\Lambda_{c,b}(q)\ll_{c,b,\varepsilon}q^\varepsilon$.  If $\alpha$ is
badly approximable, then $\sup_q\Lambda_{c,b}(q)<\infty$.
\end{theorem}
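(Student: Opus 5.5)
The plan is to apply the double-hit shell theorem (\cref{thm:resonance-shell}) to each consecutive pair $(n_j,n_{j+1})$ of the chain, with difference $h=q$. Throughout, $\beta_q=\beta_h$ and $t_q=t_{q,0}$. The key observation concerns the principality hypothesis $h\le(b-1)m-1$ of that theorem. For the edge starting at $n_j$ with $j\ge1$ it holds automatically. Indeed, $n_j\ge m+q\ge q+1$ and $b\ge2$ give
\[
(b-1)n_j-1\ge n_j-1\ge q .
\]
Hence for $1\le j\le\ell-2$ the shell index is $k=0$, so \cref{eq:principal-resonance-shell} places $n_j$ in $I_q$. Moreover $k=0$ means $q\alpha-(K_{j+1}-K_j)=\beta_q$, that is, $K_{j+1}-K_j=a_q$. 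This proves \cref{eq:chain-tail-principal}. Only the first edge $(n_0,n_1)$ may lie in a nonprincipal shell; this is the single exceptional edge mentioned before the statement.

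For $\ell\ge3$, I will apply the principal-shell bounds at the two extreme tail points. The lower bound at $n_1=m+q$ and the upper bound at $n_{\ell-2}=m+q+(\ell-3)q$ give \cref{eq:chain-start-window} directly. Subtracting the two endpoints of $I_q$ yields $(\ell-3)q<W_q$, which is \cref{eq:chain-length-necessary}. This gives $\ell<3+W_q/q$, hence $\ell\le2+\lceil W_q/q\rceil$. The cases $\ell\le2$ satisfy this bound trivially, since $W_q>1$ makes the right-hand side at least $3$; taking the supremum proves \cref{eq:maximal-chain-bound}. If also $q\le(b-1)m-1$, then the edge at $n_0$ is principal too, so $n_0\in I_q$. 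The same subtraction, now between $n_0$ and $n_{\ell-2}$, gives $(\ell-2)q<W_q$ and the improved bound $1+\lceil W_q/q\rceil$.

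For \cref{eq:long-chain-approximation}, put $P=(\ell-3)q>1$ and rewrite $P<(t_q+1)/(t_q-1)$ as $t_q(P-1)<P+1$. This is $t_q<1+2/(P-1)$, and taking $\log_b$ gives the claim. For \cref{eq:maximal-chain-diophantine}, I will reuse the argument of \cref{thm:power-saving}. For $\nu>\mu(\alpha)$ that argument gives $\beta_q\ge\|q\alpha\|\ge c_\nu q^{1-\nu}$ for all $q\ge1$. Combined with $t_q-1\ge\beta_q\log b$, this bounds $W_q\le 1+2/(\beta_q\log b)\ll q^{\nu-1}$. Therefore $\Lambda_{c,b}(q)\le 3+W_q/q\ll q^{\nu-2}$, where the constant $3$ is absorbed because $\nu>\mu(\alpha)\ge2$ makes $q^{\nu-2}\ge1$. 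The specialization $\mu(\alpha)=2$ then gives the $q^\varepsilon$ bound. In the badly approximable case, $\|q\alpha\|\ge c/q$, so $W_q/q$ is bounded uniformly in $q$.

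The only step needing real care is the first one: checking that every edge except the first meets the principality hypothesis of \cref{thm:resonance-shell}. The point to get right is that the inequality uses $n_j\ge m+q$, not $n_j\ge m$, and that it also covers $b=2$. Everything after that is routine algebra on the endpoints of $I_q$, plus the Diophantine lower bound already established in \cref{thm:power-saving}.
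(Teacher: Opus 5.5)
Your proposal is correct and follows the paper's proof essentially step for step. You apply the shell theorem to each edge, get principality of every edge after the first from $(b-1)n_j-1\ge n_j-1\ge q$, read off the window and length bounds at the extreme tail points $n_1$ and $n_{\ell-2}$, rearrange to $t_q<1+2/(P-1)$, and finish with the Diophantine lower bound on $\|q\alpha\|$; the only cosmetic difference is that you bound $W_q$ by $1+2/(\beta_q\log b)$ where the paper uses $(b+1)/(\beta_q\log b)$.
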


\begin{proof}
Apply \cref{thm:resonance-shell} to each adjacent pair
$n_j,n_{j+1}$.  Its shell index is
\[
  k_j=a_q-(K_{j+1}-K_j)\ge0.
\]
For $j\ge1$ one has $n_j=m+jq\ge q+1$, and therefore
\[
  (b-1)n_j-1\ge n_j-1\ge q.
\]
The principal-shell clause of \cref{thm:resonance-shell} now gives
$k_j=0$, which proves \cref{eq:chain-tail-principal}.

When $\ell\ge3$, all points
$n_1,\ldots,n_{\ell-2}$ lie in the open interval $I_q$.  Since
$n_{\ell-2}=n_1+(\ell-3)q$, their two extreme inequalities give
\cref{eq:chain-start-window} and \cref{eq:chain-length-necessary}.  If
$r=\ell-3$ is a nonnegative integer and $r<W_q/q$, then
$r\le\lceil W_q/q\rceil-1$, which proves
\cref{eq:maximal-chain-bound}; the case $\ell=2$ is automatic.  Under the
extra hypothesis at $m$, all of $n_0,\ldots,n_{\ell-2}$ lie in $I_q$.
Their span is $(\ell-2)q<W_q$, giving the improved constant.

Since $W_q=1+2/(t_q-1)$, the inequality
$R=(\ell-3)q<W_q$ with $R>1$ implies
$t_q-1<2/(R-1)$.  Taking logarithms proves
\cref{eq:long-chain-approximation}.  Also
\[
  \Lambda_{c,b}(q)
  \le3+\frac{b+1}{q\log b\,\beta_q}
  \le3+\frac{b+1}{q\log b\,\|q\alpha\|}.
\]
For $\nu>\mu(\alpha)$ one has
$\|q\alpha\|\gg_{\alpha,\nu}q^{1-\nu}$, proving
\cref{eq:maximal-chain-diophantine}.  The final two assertions follow from
this estimate and, in the badly approximable case, from
$\|q\alpha\|\gg1/q$.
\end{proof}

The terminology of one-sided approximation used here is standard; see, for
example, Han\v{c}l and Turek~\cite{hanclturek2019}.  The content of this
corollary is that a self-prefix hit chain forces the sharp inequality needed
to invoke the classical Legendre criterion, rather than a new proof of that
criterion.

\begin{corollary}[Long hit chains have a convergent direction]
\label{cor:long-chain-convergent-direction}
Retain the notation of \cref{thm:arithmetic-hit-chain}, and put
\[
  C_b=\frac{\sqrt b+1}{\sqrt b-1}.
\]
If
\[
  m,m+q,\ldots,m+(\ell-1)q\in\Sset_{c,b},
  \qquad \ell\ge3+\lceil C_b\rceil,
\]
set
\[
  p=\lfloor q\alpha\rfloor,
  \qquad g=\gcd(p,q),
  \qquad a=p/g,
  \qquad d=q/g,
\]
where $\gcd(0,q)=q$.  Then $a/d$ is a regular continued-fraction
convergent of $\alpha$, approached from below:
\begin{equation}\label{eq:long-chain-convergent-direction}
  \boxed{d\alpha-a>0.}
\end{equation}
Equivalently, the rational number $p/q$, written in lowest terms, is a
positive-error convergent.  In particular, if $\gcd(p,q)=1$, then $q$
itself is a convergent denominator.
\end{corollary}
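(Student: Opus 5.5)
The plan is to feed the one-sided inequality \cref{eq:long-chain-approximation} from \cref{thm:arithmetic-hit-chain} into Legendre's criterion. With $R=(\ell-3)q$ and $\ell\ge3+\lceil C_b\rceil$, we have $R\ge q\lceil C_b\rceil\ge C_b q$. Since $C_b>1$, we also have $R>1$, so \cref{eq:long-chain-approximation} applies and gives
\[
  0<\{q\alpha\}=q\alpha-p<\log_b\Bigl(1+\frac2{R-1}\Bigr).
\]
Positivity holds because $\alpha$ is irrational. Dividing by $g$ gives $d\alpha-a=(q\alpha-p)/g>0$, which is \cref{eq:long-chain-convergent-direction}. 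Moreover $a/d$ is in lowest terms, including the degenerate case $p=0$, where $a=0$ and $d=1$.

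The target is $|\alpha-a/d|<1/(2d^2)$, i.e.\ $d(d\alpha-a)<1/2$. Since $d(d\alpha-a)=(q\alpha-p)\,q/g^2\le q(q\alpha-p)$, it suffices to show
\[
  q\log_b\Bigl(1+\frac2{R-1}\Bigr)\le\frac12 .
\]
Set $y=b^{1/(2q)}$, so the condition is $1+2/(R-1)\le y$, i.e.\ $R\ge (y+1)/(y-1)$. This is the main computation, and it is where $C_b$ comes from. For $q=1$, $y=\sqrt b$, and the requirement $R\ge C_b$ holds. For general $q$, the bound $y-1\ge (\log b)/(2q)$ gives $(y+1)/(y-1)\le 1+4q/\log b$. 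I would compare this with $C_bq$, using $C_b q\ge q+(C_b-1)q$ together with the identity $C_b-1=2/(\sqrt b-1)$. I need to check that $2q/(\sqrt b-1)\ge 4q/\log b-\bigl(q-1\bigr)$, or do the comparison more carefully. The cleanest route is probably to note that $t\mapsto (b^{t}+1)/(b^{t}-1)\cdot t$ is increasing on $t>0$ (convexity of $b^t$), so at $t=1/(2q)\le 1/2$ we get $(y+1)/(y-1)\le 2q\cdot\tfrac12 C_b=qC_b\le R$. Establishing this monotonicity cleanly is the only real obstacle. It follows from $x\mapsto x\coth x$ being increasing, with $x=t\log b/2$.

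Once $|\alpha-a/d|<1/(2d^2)$ holds with $\gcd(a,d)=1$, Legendre's theorem~\cite{khinchin1997} shows that $a/d$ is a convergent of $\alpha$. The positive sign was already established, so the convergent is approached from below. The equivalent formulations are immediate: $p/q$ in lowest terms is $a/d$, and if $g=1$ then $d=q$, so $q$ is a convergent denominator.
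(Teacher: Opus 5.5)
Your argument is correct and is essentially the paper's: both turn the chain constraint $(\ell-3)q<W_q$ (you via its consequence \cref{eq:long-chain-approximation}) into $q\{q\alpha\}<\tfrac12$ by a convexity inequality, then reduce $p/q$ and apply Legendre's criterion. Your monotonicity of $x\coth x$ is the same fact as the paper's power-series bound $q\operatorname{arctanh}(1/(Lq))\le\operatorname{arctanh}(1/L)$ under the substitution $u=\tanh x$. You were right to abandon the cruder bound $1+4q/\log b$, which is already too weak at $q=1$.
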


\begin{proof}
Put $L=\ell-3$, $\beta=\{q\alpha\}$ and $t=b^\beta$.  By
\cref{eq:chain-length-necessary}, and because $L\ge C_b>1$,
\[
  t<\frac{Lq+1}{Lq-1}.
\]
The power series for $\operatorname{arctanh}$ gives
\[
  q\operatorname{arctanh}\frac1{Lq}
  \le \operatorname{arctanh}\frac1L.
\]
Consequently,
\[
  \left(\frac{Lq+1}{Lq-1}\right)^{2q}
  \le
  \left(\frac{L+1}{L-1}\right)^2
  \le b,
\]
where the last inequality is equivalent to $L\ge C_b$.  The strict first
bound on $t$ therefore yields $t^{2q}<b$, or
\[
  0<\alpha-\frac pq=\frac\beta q<\frac1{2q^2}.
\]
After reducing $p/q=a/d$, one has $d\le q$, so the right side is less than
$1/(2d^2)$.  Legendre's theorem shows that $a/d$ is a regular
  continued-fraction convergent~\cite{khinchin1997}.  Its error is positive because
$d\alpha-a=\beta/g>0$.
\end{proof}

\begin{remark}[The exceptional first edge is genuine]
\label{rem:chain-first-edge-sharp}
For $(c,b)=(3,2)$, exact integer arithmetic gives
\[
  27,\ 95,\ 163\in\Sset_{3,2}
\]
with radix scales $38,144,251$.  Thus the step is $q=68$, while the two
successive scale differences are $106$ and $107$, and
$\lfloor68\log_2 3\rfloor=107$.  The first edge has shell index $1$ and
the second has index $0$.  Moreover $W_{68}<68$, so
\cref{eq:maximal-chain-bound} gives the attained upper bound $3$.  Hence
the exceptional first edge and the additive constant $2$ in the general
bound cannot in general be removed.
\end{remark}

\begin{proposition}[Exact shell sieve for a fixed difference]
\label{prop:fixed-difference-exact-sieve}
Assume $\alpha=\log_b c$ is irrational.
Let
\[
  r_{c,b}(h)=\#\{m\ge1:m,m+h\in\Sset_{c,b}\}.
\]
For $h\ge1$ retain $t_{h,k}=b^{\beta_h+k}$ and put
\[
  L_{h,k}=\frac{h-t_{h,k}}{t_{h,k}-1},
  \qquad
  U_{h,k}=\frac{h+1}{t_{h,k}-1},
  \qquad
  K_h=\{k\ge0:t_{h,k}<h+2\}.
\]
The number of positive integers in the open shell
$(L_{h,k},U_{h,k})$ is exactly
\begin{equation}\label{eq:fixed-difference-shell-count}
  N_{h,k}=
  \max\left\{0,
  \lceil U_{h,k}\rceil-
  \max\{1,\lfloor L_{h,k}\rfloor+1\}\right\}.
\end{equation}
Therefore
\begin{equation}\label{eq:fixed-difference-exact-sieve}
  \boxed{r_{c,b}(h)\le\sum_{k\in K_h}N_{h,k}},
\end{equation}
and, without any Diophantine hypothesis,
\begin{equation}\label{eq:fixed-difference-phase-bound}
  r_{c,b}(h)\ll_b
  \log(h+2)+\frac1{\{h\alpha\}}.
\end{equation}
If $\mu(\alpha)<\infty$, then for every $\nu>\mu(\alpha)$,
\begin{equation}\label{eq:fixed-difference-power-bound}
  \boxed{r_{c,b}(h)\ll_{c,b,\nu}h^{\nu-2}.}
\end{equation}
If $\alpha$ is badly approximable, then
$r_{c,b}(h)\ll_{c,b}\log(h+2)$.
\end{proposition}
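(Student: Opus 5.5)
Everything rests on \cref{thm:resonance-shell}: if $m,m+h\in\Sset_{c,b}$, then $m$ lies in the open shell $(L_{h,k},U_{h,k})$ for some $k\in K_h$.

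\emph{Shell count and sieve.} The positive integers in an open interval $(L,U)$ are those with $\max\{1,\lfloor L\rfloor+1\}\le n\le\lceil U\rceil-1$. The lower end holds because $n>L$ is equivalent to $n\ge\lfloor L\rfloor+1$, with no exception when $L$ is an integer. The upper end holds because $n<U$ is equivalent to $n\le\lceil U\rceil-1$. This gives \cref{eq:fixed-difference-shell-count}. Since $m\mapsto m$ injects the representations counted by $r_{c,b}(h)$ into the union of these shells, we obtain \cref{eq:fixed-difference-exact-sieve}.

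\emph{Phase bound without Diophantine input.} An open interval of length $W$ contains at most $W+1$ integers, and \cref{eq:resonance-shell-width} gives $W_{h,k}=1+2/(t_{h,k}-1)$. For $k\ge1$ we have $t_{h,k}\ge b$, so $N_{h,k}\le 2+2/(b-1)\ll_b1$. Moreover $\#K_h\le1+\log_b(h+2)$, because $t_{h,k}\ge b^k$. For $k=0$ we use $t-1=b^{\beta_h}-1\ge\beta_h\log b$, which gives $N_{h,0}\le2+2/(\beta_h\log b)$. Summing these yields \cref{eq:fixed-difference-phase-bound}.

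\emph{Diophantine consequences.} If $\nu>\mu(\alpha)$, then as in the proof of \cref{thm:power-saving} we have $\{h\alpha\}\ge\|h\alpha\|\gg_{c,b,\nu}h^{1-\nu}$ for all $h\ge1$. Hence $r_{c,b}(h)\ll\log(h+2)+h^{\nu-1}$. This is one power weaker than the claim, and closing that gap is the main obstacle.

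The fix I would try combines two facts: the principal shell has length $\asymp1/\beta_h$ and lies near $h/\beta_h$, and distinct hits in a dyadic range $[M,2M)$ are separated by $\gg M^{1/\nu}$ (\cref{thm:power-saving}). Inside $I_{h,0}$ every point satisfies $m\gg h/(t-1)\gg h/\beta_h$, at least when $t_{h,0}$ is close to $1$. In the opposite case $\beta_h\gg1$ the shell is bounded and the claim is trivial. Consecutive hits there are therefore spaced by $\gg(h/\beta_h)^{1/\nu}$. It follows that
\[
N\ll 1+\frac{1/\beta_h}{(h/\beta_h)^{1/\nu}}
=1+h^{-1/\nu}\beta_h^{-1+1/\nu}.
\]
Inserting $\beta_h\gg h^{1-\nu}$ gives
\[
h^{-1/\nu}h^{(\nu-1)(1-1/\nu)}=h^{\nu-2+1/\nu-1/\nu}=h^{\nu-2},
\]
which is exactly the claimed exponent. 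The $\log(h+2)$ term is $\ll h^{\nu-2}$ as long as $\nu>2$, and every $\nu>\mu(\alpha)\ge2$ satisfies this. That proves \cref{eq:fixed-difference-power-bound}.

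\emph{Badly approximable case.} Here $\beta_h\gg1/h$, so the same spacing estimate with $\nu=2$ gives $N_{h,0}\ll h^{-1/2}h^{1/2}=O(1)$. The total is then dominated by the $\#K_h\ll\log(h+2)$ shells with $k\ge1$, each contributing $O(1)$, which gives $r_{c,b}(h)\ll_{c,b}\log(h+2)$.
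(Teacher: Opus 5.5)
Your proposal is correct and follows the paper's proof essentially step for step. The structure is the same: the exact integer count together with \cref{thm:resonance-shell} for the sieve, crude width bounds for the phase estimate, and then packing of the principal shell. That packing uses the solution-gap bound $v-u\gg u^{1/\nu}$ from \cref{thm:power-saving} at height $\gg h/\beta_h$ to recover the lost power of $h$, exactly as the paper does.

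The only differences are cosmetic. The paper avoids your case split on $\beta_h$ by noting that $t_{h,0}<b$ already gives $L_{h,0}\gg_b h/(t_{h,0}-1)$ for all $h\ge2b$, and it absorbs $h<2b$ into the constant; your argument leaves that small-$h$ range implicit. Also, in your final paragraph $N_{h,0}$ should denote the number of representation starts in the principal shell, not the number of integers in it as defined in the statement.
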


\begin{proof}
The formula \cref{eq:fixed-difference-shell-count} is the exact count of
integers $m\ge1$ satisfying $L_{h,k}<m<U_{h,k}$.  Every double hit belongs
to a unique shell by \cref{thm:resonance-shell}, proving
\cref{eq:fixed-difference-exact-sieve}.  The principal shell contributes
$O_b(1+1/\beta_h)$ integer candidates.  If $k\ge1$, then
$t_{h,k}>b\ge2$, so its width is less than $3$; and the number of eligible
nonprincipal shells is $O_b(\log(h+2))$.  This proves
\cref{eq:fixed-difference-phase-bound}.

We now use actual separation between solutions to improve the worst-case
power by one.  Fix $\nu>\mu(\alpha)$.  By
\cref{thm:power-saving}, two hits $u<v$ satisfy
\begin{equation}\label{eq:solution-gap-used-for-packing}
  v-u\gg_{c,b,\nu}u^{1/\nu}.
\end{equation}
The nonprincipal shells contribute only $O_b(\log(h+2))$.  For the
principal shell set $t=b^{\beta_h}$,
\[
  L=\frac{h-t}{t-1},
  \qquad W=\frac{t+1}{t-1}.
\]
For $h\ge2b$, one has $L\gg_b h/(t-1)>0$.  If this shell contains
$R$ representation starts, then \cref{eq:solution-gap-used-for-packing}
and its width give
\[
  R\ll_{c,b,\nu}1+WL^{-1/\nu}
  \ll_{c,b,\nu}
  1+h^{-1/\nu}(t-1)^{-(\nu-1)/\nu}.
\]
But
\[
  t-1\ge(\log b)\beta_h
  \ge(\log b)\|h\alpha\|
  \gg_{\alpha,\nu}h^{1-\nu}.
\]
Substitution yields $R\ll h^{\nu-2}$.  Since $\nu>2$, the logarithmic
nonprincipal contribution and the finite range $h<2b$ are absorbed into
the same bound, proving \cref{eq:fixed-difference-power-bound}.

If $\alpha$ is badly approximable, the same argument uses
$\|h\alpha\|\gg1/h$ and the corresponding square-root solution-gap bound.
The principal-shell packing is then $O(1)$, leaving only the
$O_b(\log(h+2))$ nonprincipal shells.
\end{proof}

\subsection{Nested windows at floor resonance centers}

The preceding shell theorem localizes every double hit with a prescribed
difference.  At the floor of the corresponding real resonance center, the
two prefix windows satisfy a stronger exact containment.

\begin{theorem}[Exact nesting at a floor resonance center]
\label{thm:floor-resonance-nesting}
Let $c,b\ge2$ be integers, put $\alpha=\log_b c$, and suppose that
\[
  q\alpha-p=\varepsilon>0,
  \qquad q\in\mathbb Z_{\ge1},\quad p\in\mathbb Z.
\]
Define
\begin{equation}\label{eq:floor-resonance-data}
  \varrho=\frac{c^q}{b^p}=b^\varepsilon>1,
  \qquad
  x=\frac q{\varrho-1},
  \qquad
  M=\lfloor x\rfloor,
\end{equation}
and assume $M\ge1$.  Then necessarily $p\ge0$, and the half-open intervals
\[
  J_0=[M,M+1),
  \qquad
  J_1=\left[
    \frac{M+q}{\varrho},
    \frac{M+q+1}{\varrho}
  \right)
\]
satisfy
\begin{equation}\label{eq:floor-resonance-containment}
  \boxed{J_1\subseteq J_0},
  \qquad
  |J_1|=\varrho^{-1}.
\end{equation}
For every $K\in\mathbb Z$ one has the exact equivalence
\begin{equation}\label{eq:floor-resonance-scale-equivalence}
\begin{aligned}
 &(M+q)b^{K+p}\le c^{M+q}<(M+q+1)b^{K+p}\\
 &\hspace{35mm}\Longleftrightarrow\quad
 \frac{c^M}{b^K}\in J_1.
\end{aligned}
\end{equation}
Consequently,
\begin{equation}\label{eq:floor-resonance-hit-implication}
  \boxed{
  M+q\in\Sset_{c,b}\quad\Longrightarrow\quad M\in\Sset_{c,b}.}
\end{equation}
\end{theorem}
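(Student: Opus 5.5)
The plan is to convert everything to elementary inequalities on the two ratios $c^M/b^K$ and $c^{M+q}/b^{K+p}$, using the single identity $c^{M+q}/b^{K+p}=\varrho\cdot c^M/b^K$. The defining property of the floor center is
\[
  M\le \frac q{\varrho-1}<M+1,
  \qquad\text{equivalently}\qquad
  M(\varrho-1)\le q<(M+1)(\varrho-1),
\]
and every step should reduce to one side of this double inequality.

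First I will dispose of $p\ge0$. The hypothesis $M\ge1$ gives $q\ge\varrho-1$, i.e.\ $\varrho\le q+1$. If $p<0$, then $\varrho=c^q b^{-p}\ge 2^q\cdot 2>q+1$, a contradiction.

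Next comes the containment \cref{eq:floor-resonance-containment}. The length $|J_1|=\varrho^{-1}$ is immediate. For the left endpoint, $(M+q)/\varrho\ge M$ is equivalent to $q\ge M(\varrho-1)$, which is the left half of the floor inequality. For the right endpoint, $(M+q+1)/\varrho\le M+1$ is equivalent to $q\le (M+1)(\varrho-1)$, which follows (even strictly) from the right half. Because both intervals are half-open with closed left ends, these two endpoint comparisons give $J_1\subseteq J_0$.

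For the scale equivalence \cref{eq:floor-resonance-scale-equivalence}, I will divide the left-hand inequalities by $b^{K+p}>0$ and write
\[
  \frac{c^{M+q}}{b^{K+p}}=\frac{c^q}{b^p}\cdot\frac{c^M}{b^K}=\varrho\,\frac{c^M}{b^K}.
\]
Dividing further by $\varrho>0$ turns the condition into exactly $c^M/b^K\in J_1$. This holds for every integer $K$, including negative ones.

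For the hit implication \cref{eq:floor-resonance-hit-implication}, I take the scale $k\ge0$ witnessing $M+q\in\Sset_{c,b}$ and set $K=k-p\in\mathbb Z$. The equivalence together with the containment gives $c^M/b^K\in J_0$, that is, $Mb^K\le c^M<(M+1)b^K$.

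The only delicate point, and the main obstacle I expect, is that membership in $\Sset_{c,b}$ requires $K\ge0$, whereas $k\ge0$ alone does not give this since $p$ may exceed $k$. I will rule out $K<0$ directly. If $K\le -1$, then
\[
  c^M<(M+1)b^K\le \frac{M+1}{b}\le\frac{M+1}2,
\]
while $c^M\ge 2^M\ge M+1$, a contradiction. Hence $K\ge0$ and $M\in\Sset_{c,b}$, completing the proof.
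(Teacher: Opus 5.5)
Your proposal is correct and follows essentially the same route as the paper: the same contradiction $\varrho\ge2^{q+1}>q+1$ gives $p\ge0$, and the same factorization $c^{M+q}/b^{K+p}=\varrho\,c^M/b^K$ gives the scale equivalence. The same exclusion of a negative scale via $c^M\ge2^M\ge M+1$ then gives the hit implication. The only cosmetic difference is in the containment $J_1\subseteq J_0$: you cross-multiply against $M(\varrho-1)\le q<(M+1)(\varrho-1)$, whereas the paper writes the endpoint differences as $q(x-M)/(x+q)\ge0$ and $q(x-M-1)/(x+q)<0$.
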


\begin{proof}
If $p<0$, then
\[
  \varrho=c^qb^{-p}\ge2^{q+1},
\]
so $\varrho-1>q$ and $x<1$, contrary to $M\ge1$.  Hence $p\ge0$.

The identity $\varrho=1+q/x$ gives
\[
  \frac{M+q}{\varrho}-M
  =\frac{q(x-M)}{x+q}\ge0
\]
and
\[
  \frac{M+q+1}{\varrho}-(M+1)
  =\frac{q(x-M-1)}{x+q}<0.
\]
The strict inequality in the second display uses the full floor condition
$M\le x<M+1$.  It preserves the half-open upper endpoint and proves
\cref{eq:floor-resonance-containment}; the length is immediate.

Since $c^q=b^p\varrho$, division by $b^{K+p}$ shows that the first line of
\cref{eq:floor-resonance-scale-equivalence} is equivalent to
\[
  M+q\le
  \varrho\,\frac{c^M}{b^K}
  <M+q+1,
\]
which is precisely membership in $J_1$.

Now assume $M+q\in\Sset_{c,b}$ and let $L\ge0$ be a legal radix scale for
that hit:
\[
  (M+q)b^L\le c^{M+q}<(M+q+1)b^L.
\]
Put $K=L-p$.  By \cref{eq:floor-resonance-scale-equivalence},
$c^M/b^K\in J_1\subseteq J_0$.  If $K<0$, then
\[
  \frac{c^M}{b^K}>c^M\ge2^M\ge M+1,
\]
contradicting $J_0=[M,M+1)$.  Thus $K\ge0$, so the same containment is a
legal prefix inequality for $M$.  This proves
\cref{eq:floor-resonance-hit-implication}.
\end{proof}

\begin{remark}[The pair $6,10$]
\label{rem:six-ten-nesting}
For $(c,b)=(2,10)$, take $(p,q)=(1,4)$.  Then
\[
  \varrho=\frac85,\qquad x=\frac{20}3,\qquad M=6,
\]
and
\[
  J_1=\left[\frac{25}4,\frac{55}8\right)\subset[6,7).
\]
The hit at $M=6$ has scale $K=1$, and
$2^6/10=32/5\in J_1$.  Hence the shifted scale is $K+p=2$, recovering the
double hit $6,10\in\Sset_{2,10}$ without an approximate phase calculation.
\end{remark}

\begin{corollary}[The exact absolute-phase subwindow]
\label{cor:floor-resonance-phase-window}
Retain the hypotheses and notation of
\cref{thm:floor-resonance-nesting}, and suppose that
$M\in\Sset_{c,b}$.  Let $K\ge0$ be its unique radix scale and put
\[
  \theta=\{F_{c,b}(M)\},
  \qquad
  \frac{c^M}{b^K}=M b^\theta.
\]
Define
\begin{equation}\label{eq:floor-resonance-phase-endpoints}
  A=\log_b\frac{M+q}{\varrho M},
  \qquad
  B=\log_b\frac{M+q+1}{\varrho M}.
\end{equation}
Then
\begin{equation}\label{eq:floor-resonance-phase-containment}
  0\le A<B<\delta_b(M)
\end{equation}
and
\begin{equation}\label{eq:floor-resonance-phase-equivalence}
  \boxed{
  M+q\in\Sset_{c,b}
  \quad\Longleftrightarrow\quad
  \theta\in[A,B).}
\end{equation}
The subwindow has exact length
\begin{equation}\label{eq:floor-resonance-phase-length}
  B-A=\delta_b(M+q),
\end{equation}
and the phase length removed from the original prefix window is
\begin{equation}\label{eq:floor-resonance-phase-loss}
\begin{aligned}
 \delta_b(M)-\delta_b(M+q)
 &=\log_b\left(
   1+\frac{q}{M(M+q+1)}
 \right)\\
 &<\frac{q}{M(M+q+1)\log b}.
\end{aligned}
\end{equation}
\end{corollary}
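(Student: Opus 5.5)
The plan is to translate the interval containment of \cref{thm:floor-resonance-nesting} into phase coordinates by applying $\log_b(\,\cdot/M)$. This map is strictly increasing, so it preserves half-open intervals and their orientation.

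\emph{Step 1: the phase of $M$.} Since $M\in\Sset_{c,b}$ with scale $K$, we have $c^M/b^K\in[M,M+1)$. Writing $c^M/b^K=Mb^\theta$ gives $\theta=F_{c,b}(M)-K$, which lies in $[0,\delta_b(M))$. Hence $K=\lfloor F_{c,b}(M)\rfloor$, this is the unique legal scale, and $\theta=\{F_{c,b}(M)\}$, consistent with \cref{prop:criterion}.

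\emph{Step 2: the endpoints $A,B$.} The image of $J_1$ under $y\mapsto\log_b(y/M)$ is exactly $[A,B)$ by \cref{eq:floor-resonance-phase-endpoints}. The image of $J_0$ is $[0,\delta_b(M))$. The containment $J_1\subseteq J_0$ from \cref{eq:floor-resonance-containment} gives $0\le A$ and $B\le\delta_b(M)$. For strict inequality at the top I will reuse the strict computation $(M+q+1)/\varrho<M+1$ from the proof of \cref{thm:floor-resonance-nesting}. Finally $A<B$ is trivial, which proves \cref{eq:floor-resonance-phase-containment}.

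\emph{Step 3: the equivalence \cref{eq:floor-resonance-phase-equivalence}.}
\begin{itemize}
\item[($\Leftarrow$)] If $\theta\in[A,B)$, then $c^M/b^K\in J_1$. By \cref{eq:floor-resonance-scale-equivalence}, $M+q$ satisfies the prefix inequality at scale $K+p$. Here $K\ge0$ and $p\ge0$ (the latter shown in \cref{thm:floor-resonance-nesting}), so $K+p\ge0$ is a legal scale.
\item[($\Rightarrow$)] If $M+q\in\Sset_{c,b}$ with scale $L$, the proof of \cref{eq:floor-resonance-hit-implication} shows that $K'=L-p\ge0$ and $c^M/b^{K'}\in J_1\subseteq J_0$. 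This makes $K'$ a legal scale for $M$, so uniqueness from Step~1 forces $K'=K$. Then $c^M/b^K\in J_1$, i.e.\ $\theta\in[A,B)$.
\end{itemize}
The main care point is this identification of scales. It relies on the half-open windows forcing uniqueness, since $\delta_b(M)\le 1$.

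\emph{Step 4: the lengths.} The length formula is immediate:
\[
B-A=\log_b\frac{M+q+1}{M+q}=\delta_b(M+q).
\]
For the loss, compute
\[
\frac{(M+1)(M+q)}{M(M+q+1)}=1+\frac{q}{M(M+q+1)}.
\]
The strict bound $\log(1+u)<u$ for $u>0$, divided by $\log b$, then gives \cref{eq:floor-resonance-phase-loss}.
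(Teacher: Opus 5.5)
Your proposal is correct and follows essentially the same route as the paper: you map $J_0,J_1$ by $y\mapsto\log_b(y/M)$, use the strict upper-endpoint computation from \cref{thm:floor-resonance-nesting}, identify the scale of $M+q$ as $K+p$ via uniqueness of the radix scale, and finish with the same length identity and $\log(1+u)<u$. The only cosmetic difference is that you derive uniqueness of the scale from $\theta\in[0,\delta_b(M))$ with $\delta_b(M)\le1$, whereas the paper writes the same bound as $b^a\le c^n/n<b^{a+1}$.
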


\begin{proof}
First note that the radix scale of any hit $n\in\Sset_{c,b}$ is unique.
Indeed, if
\[
  nb^a\le c^n<(n+1)b^a,
\]
then
\[
  b^a\le\frac{c^n}{n}
  <b^a\left(1+\frac1n\right)\le b^{a+1},
\]
so $a=\lfloor\log_b(c^n/n)\rfloor$.

The increasing map $y\mapsto\log_b(y/M)$ sends $J_0$ to
$[0,\delta_b(M))$ and sends $J_1$ to $[A,B)$.  The strict upper endpoint
in the proof of \cref{thm:floor-resonance-nesting} therefore gives
\cref{eq:floor-resonance-phase-containment}.

If $M+q$ is a hit, let $L\ge0$ be its unique scale.  The proof of
\cref{thm:floor-resonance-nesting}, with $K'=L-p$, shows that $K'\ge0$ and
$c^M/b^{K'}\in J_1\subseteq J_0$.  Hence $K'$ is a legal scale for $M$;
uniqueness forces $K'=K$, and therefore $L=K+p$.  This proves the forward
direction of \cref{eq:floor-resonance-phase-equivalence}.  Conversely,
$\theta\in[A,B)$ means $c^M/b^K\in J_1$.  Since $p\ge0$,
\cref{eq:floor-resonance-scale-equivalence} supplies the legal scale
$K+p\ge0$ for $M+q$, proving the reverse direction.

Subtracting the two endpoints in
\cref{eq:floor-resonance-phase-endpoints} gives
\[
  B-A=\log_b\frac{M+q+1}{M+q}=\delta_b(M+q).
\]
Finally,
\[
 \frac{(M+1)(M+q)}{M(M+q+1)}
 =1+\frac{q}{M(M+q+1)},
\]
and $\log(1+t)<t$ proves
\cref{eq:floor-resonance-phase-loss}.
\end{proof}

\begin{theorem}[Finite multistep nesting and endpoint filling]
\label{thm:multistep-floor-resonance}
Retain the hypotheses and notation of
\cref{thm:floor-resonance-nesting}.  For $j\ge0$, put
\[
  n_j=M+jq,
  \qquad
  \mathcal I_j=
  \left[
    \frac{n_j}{\varrho^j},
    \frac{n_j+1}{\varrho^j}
  \right).
\]
For every integer $J\ge1$ one has the exact half-open intersection formula
\begin{equation}\label{eq:multistep-intersection}
  \bigcap_{j=0}^{J}\mathcal I_j
  =
  \begin{cases}
    \displaystyle
    \left[
      \dfrac{M+q}{\varrho},
      \dfrac{M+Jq+1}{\varrho^J}
    \right),
    &\varrho^{J-1}(M+q)<M+Jq+1,\\[3mm]
    \varnothing,
    &\varrho^{J-1}(M+q)\ge M+Jq+1.
  \end{cases}
\end{equation}
Consequently,
\begin{equation}\label{eq:multistep-Jmax}
  J_{\max}=
  \max\left\{
    J\ge1:\varrho^{J-1}(M+q)<M+Jq+1
  \right\}
\end{equation}
is well defined and finite.

For every $K\in\mathbb Z$ and $j\ge0$ there is also the exact equivalence
\begin{equation}\label{eq:multistep-scale-equivalence}
\begin{aligned}
 &(M+jq)b^{K+jp}\le c^{M+jq}
   <(M+jq+1)b^{K+jp}\\
 &\hspace{34mm}\Longleftrightarrow\quad
 \frac{c^M}{b^K}\in\mathcal I_j.
\end{aligned}
\end{equation}
If $1\le J\le J_{\max}$ and both $M+q$ and $M+Jq$ belong to
$\Sset_{c,b}$, then
\begin{equation}\label{eq:multistep-endpoint-filling}
  \boxed{
    M,M+q,M+2q,\ldots,M+Jq\in\Sset_{c,b}.}
\end{equation}
Moreover, their unique radix scales form an arithmetic progression
\[
  K,K+p,K+2p,\ldots,K+Jp
\]
for some $K\ge0$.
\end{theorem}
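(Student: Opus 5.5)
The plan is to reduce everything to two monotonicity facts about the endpoints of $\mathcal I_j$. Both facts come from the floor condition $M\le x<M+1$ with $x=q/(\varrho-1)$, exactly as in the proof of \cref{thm:floor-resonance-nesting}.

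\emph{Left endpoints.} For $j\ge1$ the left endpoints satisfy $n_{j+1}/\varrho^{j+1}\le n_j/\varrho^j$. This is equivalent to $q\le(\varrho-1)(M+jq)$, which holds because $M+q>x$. At $j=0$, the nesting theorem already gives $(M+q)/\varrho\ge M$. Hence the largest left endpoint among $\mathcal I_0,\dots,\mathcal I_J$ is $(M+q)/\varrho$.

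\emph{Right endpoints.} For $j\ge0$ one has $(n_{j+1}+1)/\varrho^{j+1}<(n_j+1)/\varrho^j$. This is equivalent to $q<(\varrho-1)(n_j+1)$, which holds because $n_j+1\ge M+1>x$. Hence the smallest right endpoint is $(M+Jq+1)/\varrho^J$.

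The intersection of half-open intervals is $[\max\text{left},\min\text{right})$. This yields \cref{eq:multistep-intersection}, and nonemptiness is exactly $\varrho^{J-1}(M+q)<M+Jq+1$ after multiplying by $\varrho^J$. The index $J=1$ always qualifies. The admissible $J$ form an initial segment, since the intersections decrease in $J$. The set is finite because $\varrho^{J-1}$ grows exponentially against the linear right-hand side. So $J_{\max}$ is well defined. The scale equivalence \cref{eq:multistep-scale-equivalence} is the same division as before: $c^{M+jq}=c^M b^{jp}\varrho^j$, then divide by $b^{K+jp}$.

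For endpoint filling I would argue as follows. Let $L$ and $L'$ be the scales of the hits $M+q$ and $M+Jq$, and put $K=L-p$ and $K'=L'-Jp$. Then \cref{eq:multistep-scale-equivalence} gives $v=c^M/b^K\in\mathcal I_1$ and $v'=c^M/b^{K'}\in\mathcal I_J$. As in \cref{thm:floor-resonance-nesting}, $K\ge0$, and $p\ge0$.

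The key step, which I expect to be the main obstacle, is forcing $K'=K$. The numbers $v$ and $v'$ differ by the factor $b^{K-K'}$. Assume $J\ge2$, since $J=1$ is trivial. Because $\mathcal I_1\cap\mathcal I_J\ne\varnothing$, both $v$ and $v'$ lie in the convex hull $[n_J/\varrho^J,(M+q+1)/\varrho)$. Its endpoint ratio is less than
\[
  \Bigl(1+\tfrac1{M+q}\Bigr)\Bigl(1+\tfrac1{n_J}\Bigr)\le\tfrac32\cdot\tfrac43=2\le b,
\]
using the bound $n_J/(n_J+1)\cdot(M+q)/\varrho<n_J/\varrho^J$, which comes from the nonempty intersection. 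So $|K-K'|=0$, and $v$ lies in $\mathcal I_1\cap\mathcal I_J$.

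By the intersection formula, $\mathcal I_1\cap\mathcal I_J=\bigcap_{j=0}^J\mathcal I_j$, so $v\in\mathcal I_j$ for every $0\le j\le J$. Then \cref{eq:multistep-scale-equivalence} gives each $M+jq$ a prefix inequality at scale $K+jp\ge0$, which is legal. Uniqueness of scales, shown in \cref{cor:floor-resonance-phase-window}, identifies these as the radix scales.
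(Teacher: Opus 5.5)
Your proposal is correct and follows essentially the paper's proof. The ingredients are the same:
\begin{itemize}
\item the same endpoint monotonicity (the paper writes the differences using $t=x-M$);
\item the same coherence step, which forces the two endpoint scales to agree because a ratio is below $2\le b$;
\item the same final identification $\mathcal I_1\cap\mathcal I_J=\bigcap_{j=0}^J\mathcal I_j$.
\end{itemize}
In the coherence step the paper works with $R=\varrho^{J-1}c^{n_1}/b^{L_1}$ at the scale of $n_J$, but your bound $(1+1/n_1)(1+1/n_J)\le2$ is exactly its condition $(n_1-1)(n_J-1)\ge2$.

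One small imprecision: the containment of $v,v'$ in $[n_J/\varrho^J,(M+q+1)/\varrho)$ follows from the endpoint monotonicity alone. The nonempty intersection $\mathcal I_1\cap\mathcal I_J$ is needed only for the ratio bound.
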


\begin{proof}
Write $t=x-M\in[0,1)$.  Since $\varrho-1=q/x$, the lower and upper
endpoints
\[
  \ell_j=\frac{M+jq}{\varrho^j},
  \qquad
  u_j=\frac{M+jq+1}{\varrho^j}
\]
satisfy
\begin{align*}
  \ell_{j+1}-\ell_j
  &=\frac{(\varrho-1)(t-jq)}{\varrho^{j+1}},\\
  u_{j+1}-u_j
  &=\frac{(\varrho-1)(t-1-jq)}{\varrho^{j+1}}.
\end{align*}
Thus $\ell_1\ge\ell_0$, the sequence $(\ell_j)_{j\ge1}$ is strictly
decreasing, and $(u_j)_{j\ge0}$ is strictly decreasing.  The largest lower
endpoint among $\mathcal I_0,\ldots,\mathcal I_J$ is therefore $\ell_1$,
whereas the smallest upper endpoint is $u_J$.  This proves
\cref{eq:multistep-intersection}, including the strict inequality forced by
the half-open upper endpoint.  Since $u_J\to0$ while $\ell_1>0$,
\cref{eq:multistep-Jmax} is finite.

The identity $c^q=b^p\varrho$ gives
\[
  \frac{c^{M+jq}}{b^{K+jp}}
  =\varrho^j\frac{c^M}{b^K},
\]
which proves \cref{eq:multistep-scale-equivalence}.  The preceding nesting
theorem already shows that $p\ge0$.

It remains to prove endpoint filling.  The case $J=1$ is the preceding
nesting theorem, so assume $2\le J\le J_{\max}$ and set
\[
  n_1=M+q,
  \qquad n_J=M+Jq.
\]
The strict decrease of the lower endpoints after $j=1$, together with the
strict nonemptiness condition in \cref{eq:multistep-Jmax}, gives
\begin{equation}\label{eq:multistep-raw-lower}
  n_J<\varrho^{J-1}n_1<n_J+1.
\end{equation}
Let $L_1$ be the unique radix scale of the hit $n_1$, and put
\[
  z=\frac{c^{n_1}}{b^{L_1}}\in[n_1,n_1+1),
  \qquad R=\varrho^{J-1}z.
\]
Then $R>n_J$, while
\begin{equation}\label{eq:multistep-raw-upper}
  R
  <(n_J+1)\left(1+\frac1{n_1}\right)
  \le2n_J.
\end{equation}
Here $n_1\ge2$ and $n_J\ge3$, and the last inequality is equivalent to
$(n_J-1)(n_1-1)\ge2$.

Let $L_J$ be the unique radix scale of the hit $n_J$, and define
\[
  d=L_J-\bigl(L_1+(J-1)p\bigr)\in\mathbb Z.
\]
Then
\[
  \frac{c^{n_J}}{b^{L_J}}=\frac{R}{b^d}\in[n_J,n_J+1).
\]
If $d\le-1$, the left side is at least $bR>2n_J\ge n_J+1$; if
$d\ge1$, \cref{eq:multistep-raw-upper} makes it smaller than
$R/b<n_J$.  Hence $d=0$: the two endpoint scales are automatically
coherent.

By the preceding nesting theorem, $K=L_1-p\ge0$ is the unique scale of
$M$.  With $y=c^M/b^K=z/\varrho$, the two coherent endpoint hits say that
$y\in\mathcal I_1\cap\mathcal I_J$.  The endpoint monotonicity proved
above gives
\[
  \mathcal I_1\cap\mathcal I_J\subseteq\mathcal I_j
  \qquad(0\le j\le J).
\]
Now \cref{eq:multistep-scale-equivalence} gives every hit in
\cref{eq:multistep-endpoint-filling}; all scales $K+jp$ are nonnegative
because $K,p\ge0$.
\end{proof}

\begin{remark}[Size and necessity of the coherent range]
\label{rem:multistep-range}
Put $s=q(\varrho-1)$.  Along any family of resonance data for which
$s\to0$, the exact threshold in \cref{eq:multistep-Jmax} satisfies
\begin{equation}\label{eq:multistep-Jmax-asymptotic}
  J_{\max}\sim\sqrt{\frac{2}{q(\varrho-1)}}.
\end{equation}
Indeed, put $h=\varrho-1$, $s=qh$, $f=x-M\in[0,1)$, and $k=J-1$.
After substituting $M=q/h-f$ and multiplying by $h/q$, the strict
nonemptiness condition is exactly
\[
  (1+h)^k\left(1+h-\frac{fh}{q}\right)
  <1+(k+1)h+\frac{(1-f)h}{q}.
\]
If $G_k$ denotes the left side minus the right side, then, uniformly for
$k=O(s^{-1/2})$,
\[
  G_k=-\frac hq+
  \left(\frac{k(k+1)}2-\frac{kf}{q}\right)h^2+O(k^3h^3),
\]
and therefore
\[
  \frac qhG_k=-1+\frac{sk^2}{2}+o(1).
\]
Here $kh=o(1)$, $sk=o(1)$, and the normalized remainder is
$qk^3h^2=(sk^2)(kh)=o(1)$.  Comparing
$k=(1\pm\eta)\sqrt{2/s}$ and then letting $\eta\downarrow0$ locates the
last strict inequality.  Since the upper endpoints decrease with $J$, the
condition cannot recover after it fails, proving the asymptotic.

For positive-error continued-fraction convergents $p_k/q_k$, this becomes
\[
  J_{\max,k}
  \sim\sqrt{\frac{2q_{k+1}}{(\log b)q_k}}
\]
only along subsequences for which $q_{k+1}/q_k\to\infty$; without this
large-next-partial-quotient hypothesis one must retain the exact inequality
in \cref{eq:multistep-Jmax}.

The condition $J\le J_{\max}$ cannot be dropped from endpoint filling.
For $(c,b)=(2,10)$ and $(p,q)=(1,4)$ one has $M=6$ and
$J_{\max}=1$.  Both $10=M+q$ and $1542=M+384q$ are hits, whereas
$14=M+2q$ is not.  Thus two separated hits need not belong to the same
coherent scale chain.  Finally, finiteness of $J_{\max}$ for each fixed
resonance is a search reduction, not a proof of finiteness or infinitude of
$\Sset_{2,10}$.
\end{remark}

\begin{proposition}[Exact coherent depth and a quadratic certificate]
\label{prop:exact-coherent-depth}
Retain the floor-resonance data in
\cref{thm:floor-resonance-nesting,thm:multistep-floor-resonance}.  Put
\[
  \gamma=\log\varrho,
  \qquad B=M+q,
  \qquad A=M+q+1,
\]
and define
\begin{equation}\label{eq:coherent-depth-lambert-root}
  \kappa=
  -\frac1\gamma
  W_{-1}\left(
    -\frac{\gamma B}{q}\e^{-\gamma A/q}
  \right)-\frac Aq.
\end{equation}
Then $\kappa>0$ and the coherent depth has the exact closed form
\begin{equation}\label{eq:coherent-depth-exact}
  \boxed{J_{\max}=\lceil\kappa\rceil.}
\end{equation}
If $k=J_{\max}-1$, then the following strictly necessary inequality holds:
\begin{equation}\label{eq:coherent-depth-quadratic-certificate}
  \boxed{kq(qk+q-2)<2(M+1).}
\end{equation}
Consequently,
\begin{equation}\label{eq:coherent-depth-quadratic-bound}
  \boxed{
  J_{\max}<
  1+\frac{\sqrt{(q-2)^2+8(M+1)}-(q-2)}{2q}}
\end{equation}
and, more simply,
\begin{equation}\label{eq:coherent-depth-simple-bound}
  J_{\max}<
  \frac{3+\sqrt{1+8(M+1)/q^2}}2
  <2+\frac{\sqrt{2(M+1)}}q.
\end{equation}
\end{proposition}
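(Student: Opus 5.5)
The plan is to treat the defining inequality of $J_{\max}$ in \cref{eq:multistep-Jmax} as the sign of a convex function of a continuous variable, solve its boundary equation with $W_{-1}$, and then bound the root by an elementary binomial expansion.

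\emph{Setting up the continuous problem.} Put $u=J-1$. With $B=M+q$, $A=M+q+1$ and $\gamma=\log\varrho>0$, the nonemptiness condition in \cref{eq:multistep-Jmax} reads
\[
  f(u):=B\e^{\gamma u}-A-uq<0 .
\]
The function $f$ is strictly convex on $\mathbb R$. It satisfies $f(0)=-1<0$, and $f(u)\to+\infty$ both as $u\to+\infty$ and as $u\to-\infty$. Hence $f$ has exactly two real zeros $u_-<0<u_+$, and for $u\ge0$ we have $f(u)<0$ if and only if $u<u_+$.

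\emph{Identifying $u_+$ with $\kappa$.} To solve $f(u)=0$, substitute $v=u+A/q$. The equation becomes $B\e^{-\gamma A/q}\e^{\gamma v}=qv$, or equivalently
\[
  (-\gamma v)\e^{-\gamma v}=-\frac{\gamma B}{q}\e^{-\gamma A/q}.
\]
The existence of two real roots shows that the right side lies in $[-\e^{-1},0)$. The root $v$ must be positive, because $qv=B\e^{-\gamma A/q}\e^{\gamma v}>0$. The two roots therefore correspond to the two real branches, and the larger one, $v_+$, comes from $W_{-1}$. Thus $u_+=-\gamma^{-1}W_{-1}(\cdots)-A/q=\kappa$, and $\kappa>0$. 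For an integer $J\ge1$ we have $J-1<\kappa$ exactly when $J\le\lceil\kappa\rceil$; this holds also when $\kappa$ is an integer, because the inequality is strict. Hence $J_{\max}=\lceil\kappa\rceil$, which is \cref{eq:coherent-depth-exact}. The one delicate point here is branch selection: one must check that $u_+$ corresponds to the larger $v$, hence to $W_{-1}$ rather than $W_0$.

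\emph{The quadratic certificate.} Let $k=J_{\max}-1$, so that $\varrho^kB<A+kq$, and put $h=\varrho-1=q/x$ with $M\le x<M+1$. The binomial lower bound $\varrho^k\ge1+kh+\binom k2h^2$ gives
\[
  kBh+\tbinom k2h^2B<1+kq .
\]
We estimate the two terms on the left separately.
\begin{itemize}[label=--]
  \item First, $kBh-kq=kq(M+q-x)/x$. Since $x-M<1$, this exceeds $kq(q-1)/x$.
  \item Second, $B\ge x$ gives $\binom k2h^2B\ge k(k-1)q^2/(2x)$.
\end{itemize}
Multiplying by $2x<2(M+1)$ then yields
\[
  2kq(q-1)+k(k-1)q^2<2(M+1).
\]
The left side equals $kq(kq+q-2)$, so this is \cref{eq:coherent-depth-quadratic-certificate}. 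When $k=0$ the certificate reads $0<2(M+1)$ and holds trivially.

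\emph{The explicit bounds.} Treat the certificate as the quadratic inequality $q^2k^2+q(q-2)k-2(M+1)<0$ in $k$. Its positive root gives \cref{eq:coherent-depth-quadratic-bound}. For \cref{eq:coherent-depth-simple-bound}, divide by $q$ and write $a=1-2/q\in[-1,1)$ and $c=8(M+1)/q^2$. Then $k<\tfrac12\bigl(-a+\sqrt{a^2+c}\bigr)$. The map $a\mapsto-a+\sqrt{a^2+c}$ is decreasing, so it is maximized at $a=-1$, which gives $J_{\max}<(3+\sqrt{1+c})/2$. Finally, $\sqrt{1+c}\le1+\sqrt c$ gives the last bound $2+\sqrt{2(M+1)}/q$.
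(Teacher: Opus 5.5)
Your proposal is correct and follows essentially the same route as the paper: you study the strictly convex function $B\e^{\gamma u}-(A+qu)$ with one negative and one positive zero, substitute $v=u+A/q$ to select the $W_{-1}$ root, and combine the degree-two binomial lower bound with the floor relation $M\le x<M+1$ to get the certificate. The differences are cosmetic. You obtain the first simple bound by monotonicity in $a$, whereas the paper simply discards the linear term. Note also that $\sqrt{1+c}<1+\sqrt c$ holds strictly because $c>0$, which gives the strict middle inequality exactly as stated.
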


\begin{proof}
Writing the real variable $u=J-1$, the strict nonemptiness condition in
\cref{eq:multistep-Jmax} is
\[
  G(u):=B\e^{\gamma u}-(A+qu)<0.
\]
Here $G(0)=-1$, $G(-A/q)>0$, $G(u)\to\infty$ as $u\to\infty$, and
$G''(u)>0$.  Thus $G$ has exactly one negative and one positive zero; if
$\kappa$ is the positive one, then $G(u)<0$ for $0\le u<\kappa$.
Putting $y=u+A/q$ in $G(u)=0$ gives
\[
  (-\gamma y)\e^{-\gamma y}
  =-\frac{\gamma B}{q}\e^{-\gamma A/q}.
\]
The larger real solution is on the branch $W_{-1}$ and is exactly
\cref{eq:coherent-depth-lambert-root}.  Therefore the admissible integers
satisfy $J-1<\kappa$.  If $\kappa$ is an integer, equality is excluded by
the strict upper endpoint; in both the integral and nonintegral cases the
largest admissible $J$ is $\lceil\kappa\rceil$.  This proves
\cref{eq:coherent-depth-exact} without an endpoint ambiguity.

For the certificate, write $h=\varrho-1=q/x$ and
$x=M+t$ with $0\le t<1$.  At $k=J_{\max}-1$ one has
\[
  B\bigl((1+h)^k-1\bigr)<kq+1.
\]
The binomial lower bound through degree two yields
\begin{equation}\label{eq:coherent-depth-binomial-step}
  k(Bh-q)+\binom{k}{2}Bh^2<1.
\end{equation}
The floor relation gives
\[
  Bh-q=\frac{q(q-t)}x>
  \frac{q(q-1)}{M+1},
  \qquad
  Bh^2=\frac{q^2B}{x^2}>
  \frac{q^2}{M+1}.
\]
Substitution in \cref{eq:coherent-depth-binomial-step} proves
\cref{eq:coherent-depth-quadratic-certificate}.  Solving its quadratic
inequality for $k$ gives \cref{eq:coherent-depth-quadratic-bound}.  If one
keeps only the quadratic binomial term, then
$k(k-1)q^2<2(M+1)$, which gives the first simpler bound in
\cref{eq:coherent-depth-simple-bound}; the last inequality is elementary.
\end{proof}

\begin{corollary}[Summability of convergent floor centers]
\label{cor:floor-resonance-center-summability}
Assume that $c,b\ge2$ are multiplicatively independent integers.  Let
$p_j/q_j$ be the regular continued-fraction convergents of
$\alpha=\log_b c$, and restrict to the indices for which
\[
  \varepsilon_j=q_j\alpha-p_j>0.
\]
Write $q_{j+1}$ for the denominator immediately following $q_j$ in the
full convergent sequence, and set
\[
  x_j=\frac{q_j}{b^{\varepsilon_j}-1},
  \qquad
  M_j=\lfloor x_j\rfloor.
\]
Then, along these positive-error indices and for all sufficiently large
$j$,
\begin{equation}\label{eq:floor-resonance-center-size}
  \boxed{M_j\asymp_b q_jq_{j+1}.}
\end{equation}
In particular,
\begin{equation}\label{eq:floor-resonance-center-sum}
  \boxed{
  \sum_{\substack{j\ge0\\\varepsilon_j>0}}
  \delta_b(M_j)<\infty,}
\end{equation}
where finitely many initial indices with $M_j<1$ are omitted.
\end{corollary}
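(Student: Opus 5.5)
The plan is to read $M_j\asymp q_jq_{j+1}$ directly off the classical size of the convergent error and then deduce summability from the growth of the denominators.

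\emph{Size of the error.} For every regular convergent one has the two-sided estimate
\[
  \frac1{2q_{j+1}}<\frac1{q_{j+1}+q_j}<|q_j\alpha-p_j|<\frac1{q_{j+1}},
\]
which is standard (see \cite{khinchin1997}). Since $\alpha$ is irrational, \cref{prop:rational-transcendental-dichotomy} applies. Along the positive-error indices this gives
\[
  \frac1{2q_{j+1}}<\varepsilon_j<\frac1{q_{j+1}}.
\]
In particular $\varepsilon_j\le1$, so for $0<\varepsilon\le1$ we may use
\[
  (\log b)\varepsilon\le b^\varepsilon-1\le(b-1)\varepsilon,
\]
the upper bound coming from convexity of $\varepsilon\mapsto b^\varepsilon$ on $[0,1]$. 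Hence
\[
  \frac{q_jq_{j+1}}{b-1}\le x_j\le\frac{2q_jq_{j+1}}{\log b}.
\]

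\emph{Passing to the floor.} We have $M_j\le x_j$, and $M_j>x_j-1\ge x_j/2$ as soon as $x_j\ge2$. Since $q_jq_{j+1}\to\infty$, the inequality $x_j\ge2$ holds for all sufficiently large $j$. This proves \cref{eq:floor-resonance-center-size}, with implied constants depending only on $b$. The finitely many indices with $M_j<1$ are among the early ones and are discarded, as the statement allows.

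\emph{Summability.} We have $\delta_b(M)=\log_b(1+1/M)\le 1/(M\log b)$. Combining this with the lower bound on $M_j$ gives
\[
  \delta_b(M_j)\ll_b\frac{1}{q_jq_{j+1}}\le\frac1{q_{j+1}}.
\]
The convergent denominators satisfy $q_{j+2}\ge q_{j+1}+q_j\ge2q_j$, so they grow at least like $2^{j/2}$. Therefore $\sum_j 1/q_{j+1}$ converges, and restricting to the positive-error indices only removes nonnegative terms. This gives \cref{eq:floor-resonance-center-sum}.

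\emph{Main obstacle.} The one genuine point is the lower bound $\varepsilon_j>1/(q_{j+1}+q_j)$. It follows from the identity
\[
  |q_j\alpha-p_j|=\frac{1}{q_j\alpha_{j+2}'+q_{j+1}}
\]
type expansion in terms of the complete quotient, namely $\|q_j\alpha\|=1/(\alpha_{j+1}q_j+q_{j-1})$ with $a_{j+1}<\alpha_{j+1}<a_{j+1}+1$. I would cite this identity rather than reprove it.

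Multiplicative independence of the integers $c,b$ is used only to guarantee that $\alpha$ is irrational, so that the continued fraction is infinite. The integrality hypothesis plays no further role here.
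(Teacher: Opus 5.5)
Your proposal is correct and follows essentially the same route as the paper: the two-sided convergent estimate $1/(q_j+q_{j+1})<\varepsilon_j<1/q_{j+1}$, the bounds $(\log b)t\le b^t-1\le(b-1)t$ for $0<t\le1$, absorbing the floor once $x_j$ is large, and summing $1/(q_jq_{j+1})$ using the at-least-Fibonacci growth of the denominators. Two cosmetic points: the inequality $1/(2q_{j+1})<1/(q_{j+1}+q_j)$ should be non-strict, since $q_0=q_1=1$ occurs when $a_1=1$. Also, the garbled display in your last paragraph should simply read $|q_j\alpha-p_j|=1/(\alpha_{j+1}q_j+q_{j-1})$; this uses $|q_j\alpha-p_j|$ rather than $\|q_j\alpha\|$, because the two differ at $j=0$ when $\{\alpha\}>1/2$.
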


\begin{proof}
The standard two-sided convergent estimate gives
\[
  \frac1{q_j+q_{j+1}}
  <\varepsilon_j
  <\frac1{q_{j+1}}.
\]
For $0<t\le1$, convexity and $\e^u\ge1+u$ give
\[
  (\log b)t\le b^t-1\le(b-1)t.
\]
Consequently,
\[
  \frac{q_jq_{j+1}}{b-1}
  <x_j
  <\frac{q_j(q_j+q_{j+1})}{\log b}
  \le\frac{2q_jq_{j+1}}{\log b}.
\]
The quantities $x_j$ tend to infinity, so taking the floor changes them
by at most a fixed multiplicative factor for all sufficiently large $j$.
This proves \cref{eq:floor-resonance-center-size}.  Moreover,
\[
  \delta_b(M_j)
  \le\frac1{M_j\log b}
  \ll_b\frac1{q_jq_{j+1}}.
\]
Convergent denominators grow at least at the Fibonacci rate, hence the
last majorant is summable even before restricting to positive-error
indices.  This proves \cref{eq:floor-resonance-center-sum}.
\end{proof}

\begin{theorem}[Lacunarity and total complexity of the coherent skeleton]
\label{thm:convergent-center-skeleton}
Retain the positive-error convergent centers of
\cref{cor:floor-resonance-center-summability}, omitting the finite initial
set for which $M_j<1$.  Let $a_n$ denote the regular continued-fraction
partial quotients.  Consecutive positive-error indices are $j$ and $j+2$,
and they satisfy
\begin{equation}\label{eq:center-strong-lacunarity}
  \boxed{
  M_{j+2}\ge(a_{j+2}+1)^2M_j\ge4M_j,
  \qquad q_{j+2}\ge2q_j.}
\end{equation}
Hence, if
\[
  C_+(X)=\#\{j:\varepsilon_j>0, 1\le M_j\le X\},
\]
then
\begin{align}
  C_+(X)&\le1+\lfloor\log_4X\rfloor,
    \label{eq:center-count-lacunary}\\
  C_+(X)&\le\frac{\log(X+1)}{4\log\phi}+O_b(1),
    \qquad \phi=\frac{1+\sqrt5}{2},
    \label{eq:center-count-fibonacci}
\end{align}
and, for every $\sigma>0$,
\begin{equation}\label{eq:center-all-power-sums}
  \sum_{\substack{j\ge0\\\varepsilon_j>0,\ M_j\ge1}}
  M_j^{-\sigma}<\infty.
\end{equation}

Let $J_j$ be the exact value $J_{\max}$ attached to the center $M_j$, and
let
\[
  \mathcal K_j=\{M_j+r q_j:0\le r\le J_j\}
\]
be its full potential coherent skeleton.  Then
\begin{equation}\label{eq:weighted-coherent-depth-sum}
  \boxed{
  \sum_{\substack{j\ge0\\\varepsilon_j>0,\ M_j\ge1}}
  \frac{J_j+1}{\sqrt{M_j}}<\infty.}
\end{equation}
In fact the left-hand side is at most $10$.  Consequently,
\begin{equation}\label{eq:coherent-skeleton-little-o}
  T(X):=
  \sum_{\substack{\varepsilon_j>0\\M_j\le X}}(J_j+1)
  =o_{c,b}(\sqrt X),
\end{equation}
with the explicit uniform estimate
\begin{equation}\label{eq:coherent-skeleton-explicit-bound}
  T(X)<
  3(1+\lfloor\log_4X\rfloor)+2\sqrt{2(X+1)}.
\end{equation}
If $\mu(\alpha)<\infty$, then for every $\epsilon>0$,
\begin{equation}\label{eq:coherent-skeleton-finite-type}
  \boxed{
  T(X)\ll_{c,b,\epsilon}
  X^{1/2-1/\mu(\alpha)+\epsilon}.}
\end{equation}
Thus $\mu(\alpha)=2$ gives $T(X)\ll_{c,b,\epsilon}X^\epsilon$; if
$\alpha$ has bounded partial quotients, then
$T(X)\ll_{c,b}\log(2+X)$.
For every fixed multiplicatively independent integer pair $(c,b)$,
linear-form estimates give an effective finite upper bound for
$\mu(\alpha)$.  Hence there is an effective, possibly very small,
$\eta_{c,b}>0$ such that
\begin{equation}\label{eq:coherent-skeleton-effective-saving}
  T(X)\ll_{c,b}X^{1/2-\eta_{c,b}}.
\end{equation}

Finally, the intervals spanned by $\mathcal K_j$ are pairwise disjoint for
all sufficiently large positive-error indices.
\end{theorem}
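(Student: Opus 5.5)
The plan is to get everything from the alternating-sign structure of convergent errors, convexity of $t\mapsto b^t-1$, and the simple depth bound \cref{eq:coherent-depth-simple-bound}. Write $e_n=q_n\alpha-p_n$. The signs of $e_n$ alternate, so consecutive positive-error indices are $j$ and $j+2$. The recurrence $q_{n+2}=a_{n+2}q_{n+1}+q_n$ applied to $e_n$, together with the sign alternation, gives
\[
  \varepsilon_j=a_{j+2}|e_{j+1}|+\varepsilon_{j+2}.
\]
The same identity one step later gives $|e_{j+1}|=a_{j+3}\varepsilon_{j+2}+|e_{j+3}|>\varepsilon_{j+2}$. Hence $\varepsilon_j>(a_{j+2}+1)\varepsilon_{j+2}$. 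Similarly $q_{j+2}\ge(a_{j+2}+1)q_j$, because $q_{j+1}\ge q_j$, and in particular $q_{j+2}\ge2q_j$.

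\textbf{Lacunarity and counting.} Convexity of $b^s-1$ gives $b^{\lambda s}-1\ge\lambda(b^s-1)$ for $\lambda\ge1$. With $\lambda=\varepsilon_j/\varepsilon_{j+2}>a_{j+2}+1$ this gives
\[
  \frac1{b^{\varepsilon_{j+2}}-1}\ge\frac{a_{j+2}+1}{b^{\varepsilon_j}-1},
\]
and therefore $x_{j+2}\ge(a_{j+2}+1)^2x_j$. Since $n=(a_{j+2}+1)^2$ is an integer, $\lfloor nx\rfloor\ge n\lfloor x\rfloor$, which yields \cref{eq:center-strong-lacunarity}.

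The bound \cref{eq:center-count-lacunary} follows because $M_j\ge1$ grows by a factor at least $4$ along the positive-error indices. For \cref{eq:center-count-fibonacci}, combine $M_j\gg_b q_jq_{j+1}$ from \cref{cor:floor-resonance-center-summability} with $q_n\gg\phi^n$. The index steps by $2$, so $q_jq_{j+1}$ gains a factor of about $\phi^4$ per step. The power sums \cref{eq:center-all-power-sums} are then geometric.

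\textbf{Coherent depth sums.} By \cref{eq:coherent-depth-simple-bound},
\[
  J_j+1<3+\frac{\sqrt{2(M_j+1)}}{q_j}.
\]
Dividing by $\sqrt{M_j}$ and using $M_j+1\le2M_j$ bounds each term by $3M_j^{-1/2}+2/q_j$. Since $M_{j+2}\ge4M_j$ and $M_j\ge1$, the first part sums to at most $3\cdot2=6$. Since $q_{j+2}\ge2q_j$ with $q_j\ge1$, the second part sums to at most $2\cdot2=4$. This gives the constant $10$.

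For $T(X)$, sum the same bound over $M_j\le X$. The constant part contributes $3C_+(X)$. The square-root part is a lacunary sum with ratio at least $2$ in $\sqrt{M_j}$, so it is at most $2\sqrt{2(X+1)}$; this is \cref{eq:coherent-skeleton-explicit-bound}. For the $o(\sqrt X)$ claim, split the sum at a fixed index $j_*$ with $2/q_{j_*}<\epsilon$. The tail then contributes at most $\epsilon\sqrt X\sum 2^{-i}$, and the head is $O(1)$.

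\textbf{Finite type and effectiveness.} If $\nu>\mu(\alpha)$, then $q_{j+1}\ll q_j^{\nu-1}$. Together with $M_j\asymp q_jq_{j+1}$ this gives $q_j\gg M_j^{1/\nu}$, and so
\[
  \frac{\sqrt{M_j}}{q_j}\ll M_j^{1/2-1/\nu}.
\]
The lacunary sum over $M_j\le X$ is dominated by its largest term, which yields \cref{eq:coherent-skeleton-finite-type}. When $\mu(\alpha)=2$ or the partial quotients are bounded, $J_j=O(1)$, and the bound becomes $O(C_+(X))=O(\log X)$. The effective saving \cref{eq:coherent-skeleton-effective-saving} uses the effective finite exponent supplied by Matveev's theorem, exactly as in \cref{thm:power-saving}.

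\textbf{Disjointness.} The span of $\mathcal K_j$ is $[M_j,\,M_j+J_jq_j]$. By \cref{eq:coherent-depth-simple-bound}, $J_jq_j<2q_j+\sqrt{2(M_j+1)}$, and $q_j=o(M_j)$ because $M_j\asymp q_jq_{j+1}$ with $q_{j+1}\to\infty$. Hence the right endpoint of the span is $<4M_j\le M_{j+2}$ for large $j$, so the spans are disjoint.

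The main obstacle is the first step: getting the exact error recurrence and its strict inequality with the correct signs, and verifying the convexity transfer so that it passes cleanly through the floor function.
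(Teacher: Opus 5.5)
Your proposal is correct and follows the paper's proof closely. The error recurrence with $|e_{j+1}|>\varepsilon_{j+2}$ matches the paper, and your convexity step is equivalent to the paper's use of the increasing secant slope $(b^t-1)/t$. The integrality of $(a_{j+2}+1)^2$ under the floor, the termwise bound $3M_j^{-1/2}+2/q_j$ from \cref{eq:coherent-depth-simple-bound}, and the lacunary sums giving $6+4=10$ are also exactly what the paper does. Your finite-type step bounds each term via $q_j\gg M_j^{1/\nu}$ and sums lacunarily; this is a slightly more direct variant of the paper's split at $Y=(X+1)^{1/\nu}$. For disjointness you use $q_j=o(M_j)$ instead of the explicit $q_j<\sqrt{(b-1)(M_j+1)}$, which is equally adequate.

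One assertion is not justified: $\mu(\alpha)=2$ does not give $J_j=O(1)$, nor $T(X)=O(\log X)$.
\begin{itemize}
\item What $\mu(\alpha)=2$ actually yields is $q_{j+1}\ll_\epsilon q_j^{1+\epsilon}$, hence only $J_j\ll_\epsilon q_j^{\epsilon/2}$.
\item Unbounded partial quotients are compatible with $\mu=2$. Along large ratios $q_{j+1}/q_j$ one has $J_j\sim\sqrt{2q_{j+1}/((\log b)q_j)}$ by \cref{rem:multistep-range}, so $J_j$ need not be bounded.
\item The stated $X^\epsilon$ bound for $\mu(\alpha)=2$ is simply your finite-type estimate specialized to $\mu(\alpha)=2$. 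Only the bounded-partial-quotient case should be routed through $J_j=O(1)$.
\end{itemize}

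A smaller point concerns \cref{eq:coherent-skeleton-explicit-bound}. The terms $\sqrt{2(M_j+1)}/q_j$ are not themselves lacunary with ratio $2$. Even $\sqrt{M_j+1}$ grows only by a factor $\sqrt{5/2}$ when $M_j$ goes from $1$ to $4$. The conclusion is still true, but the one-line justification is cleanest via the paper's route: bound each term by $\sqrt{2(X+1)}/q_j$ and use $\sum q_j^{-1}\le2$.
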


\begin{proof}
Let $\Delta_n=|q_n\alpha-p_n|$.  The continued-fraction recurrences and
alternation of the errors give
\[
  q_{j+2}=a_{j+2}q_{j+1}+q_j,
  \qquad
  \Delta_j=a_{j+2}\Delta_{j+1}+\Delta_{j+2}.
\]
Since $q_{j+1}\ge q_j$ and $\Delta_{j+1}>\Delta_{j+2}$, the first ratio used
below is at least $a_{j+2}+1$, whereas the second is strictly greater than
$a_{j+2}+1$.  Also
$\psi(t)=(b^t-1)/t$ is strictly increasing for $t>0$.  Therefore, for the
unfloored centers $x_j=q_j/(b^{\Delta_j}-1)$,
\[
  \frac{q_{j+2}}{q_j}\ge a_{j+2}+1,
  \qquad
  \frac{b^{\Delta_j}-1}{b^{\Delta_{j+2}}-1}
  =\frac{\Delta_j}{\Delta_{j+2}}
   \frac{\psi(\Delta_j)}{\psi(\Delta_{j+2})}
  >a_{j+2}+1,
\]
and hence
\[
  \frac{x_{j+2}}{x_j}
  =\frac{q_{j+2}}{q_j}
   \frac{b^{\Delta_j}-1}{b^{\Delta_{j+2}}-1}
  >(a_{j+2}+1)^2.
\]
The multiplier is an integer, so taking floors preserves the weak
inequality in \cref{eq:center-strong-lacunarity}.  The denominator
inequality follows from the first recurrence.  Iteration proves
\cref{eq:center-count-lacunary,eq:center-all-power-sums}.  It also gives the
local bound
\[
  \#\{j:Y<M_j\le X,\ \varepsilon_j>0\}
  \le\left\lceil\log_4\frac XY\right\rceil
  \qquad(0<Y<X).
\]
For \cref{eq:center-count-fibonacci}, use
$M_j+1>x_j>q_jq_{j+1}/(b-1)$ and the Fibonacci lower growth of convergent
denominators.  Since the positive errors occupy one parity, the product
$q_jq_{j+1}$ grows at least like $\phi^{4r}$ along the $r$th such index.

By \cref{eq:coherent-depth-simple-bound},
\[
  J_j+1<3+\frac{\sqrt{2(M_j+1)}}{q_j}.
\]
The lacunarity in \cref{eq:center-strong-lacunarity} gives
\[
  \sum M_j^{-1/2}\le2,
  \qquad
  \sum q_j^{-1}\le2.
\]
Since $\sqrt{(M_j+1)/M_j}\le\sqrt2$, division by $\sqrt{M_j}$ proves
\cref{eq:weighted-coherent-depth-sum} with upper bound $6+4=10$.
If $u_j=(J_j+1)/\sqrt{M_j}$, then $(u_j)\in\ell^1$ and
\[
  \frac{T(X)}{\sqrt X}
  =\sum_{M_j\le X}u_j\sqrt{\frac{M_j}{X}}\longrightarrow0
\]
by a finite-head, small-tail argument.  This proves
\cref{eq:coherent-skeleton-little-o}; using the two displayed geometric
sums before taking the limit proves
\cref{eq:coherent-skeleton-explicit-bound}.

For the finite-type refinement, write $Q=q_{j+1}$ and $R=Q/q_j$.
The center-size estimates give
\begin{equation}\label{eq:center-ratio-two-constraints}
  J_j+1\ll_b1+\sqrt R,
  \qquad q_j^2R\ll_bX
  \quad(M_j\le X).
\end{equation}
For every $\nu>\mu(\alpha)$, all sufficiently large convergents also
satisfy $R\ll_{\alpha,\nu}q_j^{\nu-2}$.  Split the positive-error
denominators at $Y=(X+1)^{1/\nu}$.  Since they at least double, the part
$q_j\le Y$ contributes $O(Y^{(\nu-2)/2})$, while
\cref{eq:center-ratio-two-constraints} makes the part $q_j>Y$ contribute
\[
  O\left(\sqrt X\sum_{q_j>Y}\frac1{q_j}\right)
  =O\left(\frac{\sqrt X}{Y}\right).
\]
Both quantities are $O(X^{1/2-1/\nu})$; the $O(\log X)$ contribution of
the constant term is absorbed.  Letting $\nu\downarrow\mu(\alpha)$ and
absorbing the remaining logarithm proves
\cref{eq:coherent-skeleton-finite-type}.  Bounded partial quotients make
$R$ bounded, so \cref{eq:center-count-lacunary} gives the stated logarithmic
bound.

It remains to prove eventual disjointness.  The upper convergent-error bound
gives
\[
  q_j<\sqrt{(b-1)(M_j+1)}.
\]
Together with \cref{eq:coherent-depth-simple-bound}, this yields
\[
  M_j+J_jq_j
  <M_j+\bigl(2\sqrt{b-1}+\sqrt2\bigr)\sqrt{M_j+1}.
\]
For all sufficiently large $M_j$ the last expression is less than $4M_j$,
which is at most the next positive-error center by
\cref{eq:center-strong-lacunarity}.  Hence the skeleton intervals are
eventually disjoint.
\end{proof}

\begin{remark}[Uniform sharpness of the square-root scale]
For fixed $b$ and $c=b^p+1$, the first positive convergent is $p/1$ and its
floor center is exactly $M=b^p$.  Here
$\varrho-1=b^{-p}$, so
\cref{eq:multistep-Jmax-asymptotic} gives
$J_{\max}\sim\sqrt{2M}$ as $p\to\infty$.  Thus the square-root scale in the
uniform estimate \cref{eq:coherent-skeleton-explicit-bound} cannot be
improved uniformly over integer parameter pairs.  This example, like the
whole theorem, concerns the potential coherent skeleton and does not assert
that any of its points are hits.
\end{remark}

\begin{remark}
\label{rem:floor-resonance-boundary}
The nesting theorem is a one-way structural implication.  It does not show
that any floor resonance center is a hit, and
\cref{eq:floor-resonance-center-sum} is not a deterministic
Borel--Cantelli theorem for the prescribed phases.  Even a proof that only
finitely many such centers produce double hits would not decide whether
$\Sset_{2,10}$ is finite or infinite, because general solutions need not
occur at these centers.
\end{remark}

%% file: sections/en/06_certified_search.tex
\section{Certified continued-fraction localization and complexity}

We now convert the separation of an irrational rotation into a block search
that is both exact and output-sensitive.  The first lemma records the range
in which an apparent real interval may be unfolded without wrapping around
the circle.

\paragraph{Computational model.}
In every bit-complexity statement, $c$ and $b$ (and $\nu$, when it occurs)
are fixed, whereas the search bound $N$ is given in binary.  We use the
deterministic bit-operation model.  Integers of magnitude $N^{O(1)}$ have
$O(\log N)$ bits, and division, gcd, and Euclidean floor-sum operations are
charged by their bit costs.  Certified logarithms at precision $p$ are
evaluated by deterministic ball algorithms of $p^{O(1)}$ bit cost; standard
background on multiprecision arithmetic and midpoint-radius ball arithmetic
is given in~\cite{brentzimmermann2010,johansson2017arb}.  We use
$\operatorname{polylog}N$ for
$(\log N)^{O_{c,b,\nu}(1)}$.  The phrase ``arithmetic steps'' below counts
multiprecision integer operations, whereas the bit-complexity statement in
\cref{thm:interpolated-blocks} includes operand sizes, certified
transcendental evaluations, and exact final verification.  After a fixed
parameter-dependent initial range is checked directly, every reported index
is tested against both half-open prefix endpoints.  Thus the final output,
as opposed to an internal locator report, is exactly
$\Sset_{c,b}\cap[1,N]$.

\begin{lemma}[Exact phase bins]\label{lem:phase-bins}
Assume $c\ge2$ and write $\alpha=s+\rho$ with
$s\in\mathbb Z_{\ge0}$ and $0<\rho<1$.  Given $M,H\ge1$, set
\[
  x_j=\{(M+j)\alpha-\log_b(M+1)\}\quad(1\le j\le H)
\]
and
\[
  t_j=\log_b\frac{M+j+1}{M+1}\quad(0\le j\le H).
\]
If $t_H<1$, then for every $1\le j\le H$,
\begin{equation}\label{eq:phase-bin}
  \boxed{M+j\in\Sset_{c,b}\Longleftrightarrow
  x_j\in[t_{j-1},t_j).}
\end{equation}
\end{lemma}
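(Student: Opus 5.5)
The plan is to reduce everything to \cref{prop:criterion} applied at $n=M+j$, and then rewrite its fractional-part condition in terms of the fixed reference phase $\log_b(M+1)$ instead of the moving one $\log_b(M+j)$. Since $c\ge2$, the condition $F_{c,b}(n)\ge0$ is automatic. Membership $n\in\Sset_{c,b}$ is therefore equivalent to the existence of an integer $k$ with
\[
  0\le n\alpha-\log_b n-k<\log_b\Bigl(1+\frac1n\Bigr),
\]
which is the same as $n\alpha-k\in[\log_b n,\log_b(n+1))$ for some $k\in\mathbb Z$.

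Next I subtract $\log_b(M+1)$ from all three members. With $n=M+j$, the condition becomes the existence of $k\in\mathbb Z$ with
\[
  (M+j)\alpha-\log_b(M+1)-k\in[t_{j-1},t_j),
\]
since $\log_b(M+j)-\log_b(M+1)=t_{j-1}$ and $\log_b(M+j+1)-\log_b(M+1)=t_j$. So the hit condition says that the real number $u_j=(M+j)\alpha-\log_b(M+1)$ is congruent modulo $1$ to some point of $[t_{j-1},t_j)$.

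Finally, I would use $t_H<1$ to remove the ``some $k$''. For $1\le j\le H$ we have $0=t_0\le t_{j-1}<t_j\le t_H<1$. The interval $[t_{j-1},t_j)$ therefore lies inside the fundamental domain $[0,1)$, and it meets each residue class modulo $1$ in at most one point. A translate $u_j-k$ lies in it if and only if that translate is the representative $\{u_j\}=x_j$, that is, $k=\lfloor u_j\rfloor$. Conversely, $x_j\in[t_{j-1},t_j)$ supplies the integer $k=\lfloor u_j\rfloor$. This gives \cref{eq:phase-bin}.

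The one step needing care is this unfolding: the hypothesis $t_H<1$ is what prevents a bin from wrapping around $1$. Note that $t_{j-1}\ge0$ uses $j\ge1$, and the half-open endpoints are preserved exactly because only translations by integers and by $\log_b(M+1)$ are applied. No nonnegativity of $k$ is required, because \cref{prop:criterion} already absorbs that for $c\ge2$.
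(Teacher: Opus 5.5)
Your proof is correct and is essentially the paper's argument: both apply \cref{prop:criterion} at $M+j$, shift by the fixed reference phase $\log_b(M+1)$, and use $0=t_0\le t_{j-1}<t_j\le t_H<1$ to rule out wrap-around. The only difference is phrasing. The paper writes $\{F_{c,b}(M+j)\}=\{x_j-t_{j-1}\}$ and splits into the cases $x_j<t_{j-1}$ and $x_j\ge t_{j-1}$, whereas you unfold the ``some integer $k$'' form via uniqueness of the representative in $[0,1)$; the content is the same.
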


\begin{proof}
We have
\[
  \{F_{c,b}(M+j)\}=\{x_j-t_{j-1}\},
  \qquad
  \delta_b(M+j)=t_j-t_{j-1}.
\]
If $x_j<t_{j-1}$, then
$\{x_j-t_{j-1}\}=1+x_j-t_{j-1}\ge1-t_{j-1}>t_j-t_{j-1}$,
where the final inequality uses $t_j<1$.  If $x_j\ge t_{j-1}$, the target
inequality is exactly $x_j<t_j$.  This proves
\cref{eq:phase-bin}.
\end{proof}

\begin{theorem}[Packing in convergent blocks]\label{thm:convergent-packing}
Retain the notation and hypotheses of \cref{lem:phase-bins}, and assume that
$\rho$ is irrational.  Let $q_{k-1}<q_k$, $q_k\ge2$, be
adjacent denominators of regular continued-fraction convergents of $\rho$,
and put
\[
  d_k=\|q_{k-1}\rho\|.
\]
If $H\le q_k$ and
\begin{equation}\label{eq:single-threshold}
  M\ge\frac{q_k}{b^{d_k}-1}-1,
\end{equation}
then $(M,M+H]$ contains at most one solution.
\end{theorem}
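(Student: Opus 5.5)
The plan is to combine the exact phase bins of \cref{lem:phase-bins} with the best-approximation property of continued-fraction convergents. Suppose, for contradiction, that $M+i$ and $M+j$ with $1\le i<j\le H$ are both solutions, and put $h=j-i$. Then $1\le h\le H-1<q_k$.

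First, I will check that \cref{lem:phase-bins} applies and that all bins lie inside a short arc. Rewriting \cref{eq:single-threshold} as $q_k/(M+1)\le b^{d_k}-1$ and using $H\le q_k$, I expect
\[
  t_H=\log_b\Bigl(1+\frac{H}{M+1}\Bigr)\le\log_b\Bigl(1+\frac{q_k}{M+1}\Bigr)\le d_k .
\]
Since $d_k=\|q_{k-1}\rho\|\le\tfrac12$, this gives $t_H<1$, so the lemma is available. By \cref{eq:phase-bin}, the two solutions then satisfy $x_i\in[t_{i-1},t_i)$ and $x_j\in[t_{j-1},t_j)$. Both phases therefore lie in the half-open interval $[0,t_H)$, and so $|x_j-x_i|<t_H\le d_k$.

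Next, I compare with the rotation. Since $s$ is an integer, $x_j-x_i\equiv h\alpha\equiv h\rho\pmod 1$. Hence
\[
  \|h\rho\|=\|x_j-x_i\|\le|x_j-x_i|<d_k .
\]
The classical best-approximation theorem (Lagrange) states that $\|h\rho\|\ge\|q_{k-1}\rho\|=d_k$ for every integer $1\le h<q_k$. This contradicts the previous display, so the block contains at most one solution.

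The only step that needs care is the best-approximation inequality in exactly the form used here: for $0<h<q_k$ one has $\|h\rho\|\ge\|q_{k-1}\rho\|$. I will cite it as the standard statement from \cite{khinchin1997}, including the small cases $q_{k-1}=1$ and the initial convergents; the hypothesis $q_k\ge2$ guarantees that $q_{k-1}$ is a genuine previous denominator. Everything else consists of elementary inequalities, with the strictness coming from the half-open bins.
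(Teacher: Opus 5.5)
Your proposal is correct and follows essentially the same route as the paper. The threshold gives $t_H\le\log_b(1+q_k/(M+1))\le d_k<1$, \cref{lem:phase-bins} puts every solution phase in the half-open arc $[0,t_H)$, and the best-approximation property $\|h\rho\|\ge d_k$ for $1\le h<q_k$ forbids two such phases. The only difference is cosmetic: you get $d_k\le\tfrac12$ from $\|\cdot\|\le\tfrac12$, whereas the paper uses $d_k<1/q_k$, and either suffices.
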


\begin{proof}
The best-approximation property gives
\begin{equation}\label{eq:best-separation}
  \min_{1\le h<q_k}\|h\rho\|=d_k,
  \qquad
  \frac1{q_k+q_{k-1}}<d_k<\frac1{q_k}.
\end{equation}
Under \cref{eq:single-threshold},
\[
  t_H\le t_{q_k}
  =\log_b\left(1+\frac{q_k}{M+1}\right)\le d_k<\frac12.
\]
Every solution phase lies in $[0,t_H)$ by \cref{lem:phase-bins}.  Two
distinct phases differ on the circle by
$\|(i-j)\rho\|\ge d_k$, while two points in the half-open arc
$[0,t_H)$ have distance strictly less than $t_H\le d_k$.  This is
impossible.
\end{proof}

The next theorem uses a factor $1/2$ to provide a fixed reserve under a
simple uniform grid bound.  This lets a fixed-point outer approximation
report at most one \emph{candidate} without scanning the block.  It is not
an intrinsic limit: the packing form below works whenever $t_H<d_k$ if the
grid is refined according to the remaining margin.

\begin{theorem}[Safe convergent-block locator]\label{thm:safe-locator}
Retain the notation of \cref{thm:convergent-packing}.  Suppose
\begin{equation}\label{eq:safe-locator-hypotheses}
  H\le q_k,
  \qquad t_H\le\frac{d_k}{2}.
\end{equation}
A convenient sufficient condition for
\cref{eq:safe-locator-hypotheses} is
\begin{equation}\label{eq:safe-threshold}
  H\le q_k,
  \qquad
  M\ge\frac{q_k}{b^{d_k/2}-1}-1.
\end{equation}
Let
\[
  \theta=\{M\alpha-\log_b(M+1)\},
  \qquad x_j=\{\theta+j\rho\}.
\]
Choose an integer $Q$ and take $B,P$ to be certified nearest-integer
roundings of $Q\theta$ and $Q\rho$, respectively.  Thus, with ties settled
by either fixed convention, they satisfy
\begin{equation}\label{eq:fixed-approximations}
  \dist_{\T}\left(\theta,\frac BQ\right)\le\frac1{2Q},
  \qquad
  \left|\rho-\frac PQ\right|\le\frac1{2Q},
\end{equation}
and
\begin{equation}\label{eq:Q-safe}
  Q>16(H+3)(q_k+q_{k-1}).
\end{equation}
Using certified interval arithmetic, round the two endpoints of
\[
  \left[-\frac{H+1}{Q},
  t_H+\frac{H+1}{Q}\right]\pmod1
\]
outward to the $Q^{-1}$ grid, with at most $2/Q$ additional expansion past
the certified enclosure, and hence less than $3/Q$ relative to the true
endpoint.  Denote by $\mathcal J_Q\subset\T$ the resulting union of at
most two circular intervals with endpoints on the $Q^{-1}$ grid.  Then
\begin{equation}\label{eq:report-set}
  \mathcal R(M,H)=\left\{1\le j\le H:
    \frac{(B+Pj)\bmod Q}{Q}\in\mathcal J_Q\right\}
\end{equation}
contains every genuine solution in $(M,M+H]$ and has cardinality at most
one.  It can be reported in
\[
  O(\log H\log Q)
\]
multi-precision integer arithmetic steps by Euclidean floor sums and binary
search.
\end{theorem}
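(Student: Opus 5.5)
The plan is to compare the grid phases $(B+Pj)\bmod Q$ with the true phases $x_j$, show that every genuine hit lands in $\mathcal J_Q$, and then reuse the separation argument of \cref{thm:convergent-packing} on a slightly enlarged arc.

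First, the sufficient condition \cref{eq:safe-threshold} implies \cref{eq:safe-locator-hypotheses}. Since $H\le q_k$,
\[
  t_H\le\log_b\bigl(1+q_k/(M+1)\bigr),
\]
and the lower bound on $M$ makes the right side at most $d_k/2$.

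Next comes the error bookkeeping. By \cref{eq:fixed-approximations}, the grid point $(B+Pj)/Q$ differs from $x_j=\{\theta+j\rho\}$ on $\T$ by at most
\[
  \frac1{2Q}+\frac{j}{2Q}\le\frac{H+1}{2Q}.
\]

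\emph{Completeness.} If $M+j$ is a genuine solution, then \cref{lem:phase-bins} gives $x_j\in[t_{j-1},t_j)\subset[0,t_H)$. Hence its grid phase lies in the arc $[-(H+1)/Q,\,t_H+(H+1)/Q]$. The rounding defining $\mathcal J_Q$ is outward, so $\mathcal J_Q$ contains this arc, and $j\in\mathcal R(M,H)$.

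\emph{Cardinality.} Every point of $\mathcal J_Q$ lies within $3/Q$ of the unrounded arc. So if $j$ is reported, $x_j$ lies in a circular arc of length at most
\[
  t_H+\frac{2(H+1)}Q+\frac6Q+\frac{H+1}Q<t_H+\frac{3(H+3)}Q.
\]
By \cref{eq:Q-safe} and the lower bound $d_k>1/(q_k+q_{k-1})$ from \cref{eq:best-separation},
\[
  \frac{3(H+3)}Q<\frac{3}{16(q_k+q_{k-1})}<\frac{d_k}2.
\]
Hence the arc has length $<d_k$, which is still less than $1/2$. Now take two reported indices $i\ne j$. Then $0<|i-j|<H\le q_k$, so \cref{eq:best-separation} gives $\|(i-j)\rho\|\ge d_k$. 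But $x_i$ and $x_j$ lie in a common arc of length $<d_k$, so their circular distance is $<d_k$, a contradiction. This step has the same shape as the proof of \cref{thm:convergent-packing}, with the reserve $d_k/2$ absorbing the grid errors.

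\emph{Algorithm.} The set $\mathcal J_Q$ is a union of at most two grid intervals $[u,v)$ with integer endpoints in $[0,Q]$. Membership $(B+Pj)\bmod Q\in[u,v)$ is the indicator
\[
  \Bigl\lfloor\frac{B+Pj-u}{Q}\Bigr\rfloor-\Bigl\lfloor\frac{B+Pj-v}{Q}\Bigr\rfloor,
\]
after normalizing $0\le B,P<Q$. Summing over $j\le n$ gives a difference of Euclidean floor sums $\sum_{j\le n}\lfloor(Pj+r)/Q\rfloor$, each computable in $O(\log Q)$ arithmetic steps by the Euclidean reduction. Since the count over $[1,H]$ is $0$ or $1$, binary search on prefix counts locates the unique reported index, if any, in $O(\log H)$ rounds, for $O(\log H\log Q)$ steps in total.

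The step I expect to be the main obstacle is the error accounting near the wrap-around at $0$. One must confirm that the accumulated drift $j\,|Q\rho-P|/Q$, the certified-enclosure slack and the outward grid rounding all fit inside the single reserve $d_k/2$, and that splitting $\mathcal J_Q$ into two pieces when it crosses $0$ neither drops a genuine hit nor double-counts one. The constant $16$ in \cref{eq:Q-safe} is what I would check most carefully here.
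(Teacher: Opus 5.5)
Your proposal is correct and follows the paper's proof essentially step for step. Both arguments use the same four steps: the $(H+1)/(2Q)$ drift bound between grid and exact phases, completeness via outward containment of the unrounded arc, the singleton bound via an arc of length $<d_k$ against $\min_{1\le h<q_k}\|h\rho\|=d_k$, and floor-sum counting with binary search. Your bookkeeping gives $t_H+3(H+3)/Q$ in place of the paper's $t_H+4(H+3)/Q$, and this already resolves the constant-$16$ and wrap-around concern you flag at the end; the displayed ``$<$'' there is actually an equality, which is harmless.
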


\begin{proof}
Here ``outward'' has the following exact algorithmic meaning.  Split the
circular arc at zero if necessary.  For each lifted component
$[a,b]\subset\mathbb R$, compute certified enclosures
$a\in[a^-,a^+]$ and $b\in[b^-,b^+]$, increasing the precision until both
enclosure widths are less than $Q^{-1}$.  Replace that component by
\[
  \left[\frac{\lfloor Qa^-\rfloor-1}{Q},
        \frac{\lceil Qb^+\rceil+1}{Q}\right]
\]
and reduce modulo one.  The true arc is contained in the result.  The
enclosure width, directed rounding, and the one guard cell together enlarge
either true endpoint by less than $3/Q$.  Thus this prescription is also
strict pseudocode for constructing $\mathcal J_Q$ and makes no unrecorded
floating-point rounding assumption.

The hypotheses give $t_H\le d_k/2<1$, so
\cref{lem:phase-bins} applies.  Put
\[
  y_j=\left\{\frac{B+Pj}{Q}\right\},
  \qquad e=\frac{H+1}{Q},
  \qquad \eta=e+\frac2Q=\frac{H+3}{Q}.
\]
By \cref{eq:fixed-approximations},
$\dist_\T(x_j,y_j)\le e/2$ for every $j\le H$.  Hence a genuine hit, for
which $x_j\in[0,t_H)$, is never omitted by the outward-rounded arc.

Conversely, a reported index satisfies
\[
  y_j\in[-e-3/Q,t_H+e+3/Q]\pmod1,
\]
and therefore
\[
  x_j\in[-3e/2-3/Q,t_H+3e/2+3/Q]\pmod1
  \subset[-2\eta,t_H+2\eta]\pmod1.
\]
By \cref{eq:Q-safe,eq:best-separation}, $\eta<d_k/16$.  The last arc has
length
\[
  t_H+4\eta<\frac{d_k}{2}+\frac{d_k}{4}<d_k.
\]
Two reported indices would have exact phases separated by at least $d_k$
according to \cref{eq:best-separation}, a contradiction.

It remains to locate the possible residue without scanning.  For
$0\le U\le Q$,
\begin{align}\label{eq:floor-sum-count}
  \#\{0\le j<n:(B+Pj)\bmod Q<U\}
  ={}&n+\sum_{j<n}\left\lfloor\frac{Pj+B}{Q}\right\rfloor\notag\\
  &-\sum_{j<n}\left\lfloor
    \frac{Pj+B+Q-U}{Q}\right\rfloor.
\end{align}
Each sum in \cref{eq:floor-sum-count} is evaluated by the Euclidean
floor-sum algorithm in $O(\log Q)$ arithmetic steps.  A circular window is
the union of at most two residue intervals.  Count the full index interval,
then descend only into the unique nonempty half.  Since there is at most one
reported index, the binary-search depth is $O(\log H)$.  To count indices
$1\le j\le H$ using the displayed $0\le j'<n$ convention, replace
$j$ by $j'+1$ and $B$ by $B+P$ modulo $Q$.
\end{proof}

\begin{proposition}[Packing and output-sensitive reporting]
\label{prop:output-sensitive-locator}
Retain the construction of \cref{thm:safe-locator}, but assume only
$H\le q_k$ and $t_H<1$.  Put
\[
  \eta=\frac{H+3}{Q},
  \qquad L_Q=t_H+4\eta.
\]
If $L_Q<1$, then every genuine solution is still reported and
\begin{equation}\label{eq:output-sensitive-packing}
  \boxed{
  \#\mathcal R(M,H)
  \le1+\left\lfloor\frac{L_Q}{d_k}\right\rfloor.}
\end{equation}
In particular, if $t_H<d_k$ and
\begin{equation}\label{eq:margin-grid-condition}
  Q>\frac{4(H+3)}{d_k-t_H},
\end{equation}
then at most one candidate is reported.  If
$R=\#\mathcal R(M,H)$, all reported indices can be located in
\begin{equation}\label{eq:output-sensitive-floor-sum-complexity}
  O\left(
  \bigl[1+R(1+\log(2H/\max\{1,R\}))\bigr]\log Q
  \right)
\end{equation}
multi-precision integer arithmetic steps.
\end{proposition}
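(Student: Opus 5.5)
The plan is to reuse the proof of \cref{thm:safe-locator} almost verbatim, changing only the final packing step and the search analysis. Since $t_H<1$, \cref{lem:phase-bins} still applies. With $y_j=\{(B+Pj)/Q\}$ and $e=(H+1)/Q$, the approximations \cref{eq:fixed-approximations} give $\dist_\T(x_j,y_j)\le e/2$ for $j\le H$. The outward rounding enlarges each true endpoint by less than $3/Q$. So every genuine hit, whose phase lies in $[0,t_H)$, is reported. Conversely, every reported index has exact phase in the arc $[-2\eta,t_H+2\eta]\pmod 1$, of length $L_Q$; the hypothesis $L_Q<1$ ensures this arc does not wrap around the whole circle, so it is a genuine arc of length $L_Q$.

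For the packing bound \cref{eq:output-sensitive-packing}, order the reported indices by the position of their exact phase $x_j=\{\theta+j\rho\}$ along the arc. Two distinct reported indices $i\ne j$ in $[1,H]$ with $H\le q_k$ satisfy $0<|i-j|<q_k$. By \cref{eq:best-separation}, their phases are therefore at circular distance at least $d_k$. Since both lie on an arc of length $L_Q<1$, their distance measured along the arc is at least this circular distance. Hence $R$ points pairwise $\ge d_k$ apart on an arc of length $L_Q$ force $(R-1)d_k\le L_Q$, which gives $R\le 1+\lfloor L_Q/d_k\rfloor$.

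For the special case, \cref{eq:margin-grid-condition} gives $4\eta<d_k-t_H$. Hence $L_Q<d_k$, the floor vanishes, and $R\le1$.

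For the complexity bound \cref{eq:output-sensitive-floor-sum-complexity}, use the counting formula \cref{eq:floor-sum-count}. Each count over an index interval, for a window made of at most two residue intervals, costs $O(\log Q)$ arithmetic steps. Run a binary-descent tree over $[1,H]$: count a node, and split it only when its count is positive. The leaves are the $R$ reported indices. Every recursively explored node has a nonempty count, so it is an ancestor of some reported index. The number of such nodes is at most the size of the union of the $R$ root-to-leaf paths. The standard bound on this union is $O(R(1+\log(2H/R)))$: the top $\log_2 R$ levels contribute at most $2R$ nodes in total, and below that level each path contributes at most $\log_2(H/R)+O(1)$ further nodes. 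Each explored node triggers at most two child counts, and the root count contributes the additive $1$ (covering $R=0$). Multiplying by $O(\log Q)$ gives the stated bound.

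The main obstacle is keeping the wrap-around geometry honest. Both the claim that reported phases lie in a single arc of length $L_Q$, and the comparison between circular distance and distance along that arc, need $L_Q<1$. I would spell this out carefully, lifting the arc to an interval of $\mathbb R$ of length $L_Q<1$ so that the linear packing argument is valid. The remaining steps are routine bookkeeping inherited from \cref{thm:safe-locator}.
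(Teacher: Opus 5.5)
Your proposal is correct and follows essentially the same route as the paper. You reuse the enclosure from \cref{thm:safe-locator} to place every reported exact phase in one arc of length $L_Q$, lift that arc to an interval of length $L_Q<1$ to pack the $d_k$-separated phases, and bound the binary descent by counting nonempty dyadic cells level by level (at most $\min\{2^d,R\}$ at depth $d$, with at most twice as many child counts); the only point you leave implicit is that in the singleton case $L_Q<d_k<1/2$ automatically supplies the standing hypothesis $L_Q<1$.
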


\begin{proof}
The corrected enclosure calculation in the preceding proof uses neither
$t_H\le d_k/2$ nor \cref{eq:Q-safe}: it shows that every reported exact
phase lies in one circular arc of length $L_Q$, while genuine solutions are
never omitted.  Distinct exact phases with indices in $[1,H]$ are separated
by at least $d_k$, because every nonzero index difference is less than
$q_k$.  Lifting the arc to an interval of length $L_Q<1$ and ordering its
points gives \cref{eq:output-sensitive-packing}.  Condition
\cref{eq:margin-grid-condition} makes $L_Q<d_k$, proving the singleton
claim.

For the complexity statement, evaluate the floor-sum count on a binary
partition of the index interval and descend only into nonempty children.
At each depth the number of nonempty parents is at most the smaller of the
number of dyadic cells and $R$, and at most twice as many children are
tested.  Summing over the depths gives
$O(1+R(1+\log(2H/\max\{1,R\})))$ interval counts, each costing
$O(\log Q)$ arithmetic steps by \cref{eq:floor-sum-count}.
\end{proof}

\begin{lemma}[Polylogarithmic endpoint certification]
\label{lem:polylog-certification}
Fix multiplicatively independent integers $c,b\ge2$ and a constant $C\ge1$.
There is an effective constant $K=K(c,b,C)$ with the following property.
Let $N\ge3$, let $1\le r,s\le2N+1$, and let
$u_1,u_2,u_3,u_4\in\mathbb Z$ satisfy
$|u_i|\le N^{C+1}$.  After omitting terms whose logarithm is zero, every
nonzero linear form
\begin{equation}\label{eq:endpoint-linear-form}
  \Lambda
  =u_1\log c+u_2\log b+u_3\log r+u_4\log s
\end{equation}
satisfies
\begin{equation}\label{eq:endpoint-separation}
  |\Lambda|\ge
  \exp\bigl(-K(\log N)^3\bigr).
\end{equation}
Consequently, if $M,H,m\le N$ and $Q\le N^C$, then all of the following
can be certified with $O_{c,b,C}((\log N)^3)$ bits of ball precision:
\begin{enumerate}[label=\textup{(\alph*)},leftmargin=2.2em]
  \item the continued-fraction comparisons needed to construct the
  convergents of $\rho=\{\log_b c\}$ with denominator at most $N^C$;
  \item the floors, ceilings, and directed grid endpoints used for
  $\theta=\{M\log_b c-\log_b(M+1)\}$ and
  $t_H=\log_b((M+H+1)/(M+1))$ in \cref{thm:safe-locator};
  \item the two final comparisons
  $mb^k\le c^m<(m+1)b^k$ for a reported index.
\end{enumerate}
If one of the corresponding forms is exactly zero, equality can instead be
decided in time polynomial in $\log N$ by a gcd-free basis computation on
the finitely many integer arguments.  Thus every endpoint construction and
final verification used by the locator has deterministic
$\operatorname{polylog}N$ bit complexity.
\end{lemma}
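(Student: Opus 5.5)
The plan has three parts: a linear-forms lower bound from Matveev's theorem, a translation of that bound into a working precision, and a separate exact procedure for the degenerate case $\Lambda=0$.

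\textbf{The separation bound.} We apply Matveev's lower bound~\cite{matveev2000} directly to \cref{eq:endpoint-linear-form}. It involves at most $n=4$ logarithms of positive rational integers $c,b,r,s$. Their heights satisfy $h(c),h(b)=O_{c,b}(1)$ and $h(r),h(s)\le\log(2N+1)\ll\log N$. The coefficient bound is $\max|u_i|\le N^{C+1}$, so $\log\max|u_i|\ll_C\log N$. Matveev's theorem, in the rational case with $D=1$, then gives
\[
  \log|\Lambda|\ge -K_0\,A_1A_2A_3A_4\,\log(\e\max|u_i|).
\]
Here the $A_i$ are the modified heights. Two of them are $O_{c,b}(1)$ and two are $O(\log N)$, so the product is $O((\log N)^2)$. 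The logarithmic factor in the coefficients contributes one more $\log N$, which yields \cref{eq:endpoint-separation} with some $K=K(c,b,C)$. Terms whose logarithm vanishes (namely $r=1$ or $s=1$) are simply dropped, and the same bound applies with fewer logarithms.

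\textbf{Converting separation into precision.} Each decision (a), (b), (c) must be written as the sign of a form of type \cref{eq:endpoint-linear-form}, or as an integrality test for such a form.

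For (a), comparing $\rho$ with a rational $p'/q'$ with $q'\le N^C$ is the sign of $q'\log c-(p'+sq')\log b$. For (c), the two comparisons are the signs of $m\log c-k\log b-\log m$ and $m\log c-k\log b-\log(m+1)$. Here $k\le m\log_b c$ is itself computed as a floor of such a quantity. For (b), floors of $Q\theta$, $Q\rho$, $Qt_H$ and the grid endpoints reduce to signs of
\[
  QM\log c-(QsM+z)\log b-Q\log(M+1)
\]
after multiplying through by $\log b$. The comparison with an integer $z$ gives coefficients of size $\ll N^{C+1}$. The quantity $t_H$ involves $\log(M+H+1)-\log(M+1)$ with arguments at most $2N+1$. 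This is the reason the statement allows $r,s\le 2N+1$, and it is the reason for needing four logarithms.

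If $\Lambda\neq0$, ball arithmetic at precision $p$ encloses $\Lambda$ within a radius $\ll N^{C+1}2^{-p}$. Once $p\ge K'(\log N)^3$, that radius is below half of the separation bound, so the sign is certified. Certified logarithms at this precision cost $p^{O(1)}=\operatorname{polylog}N$ bits, as in the computational model.

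\textbf{The zero case.} Deciding $\Lambda=0$ is exact: $\Lambda=0$ is equivalent to $c^{u_1}b^{u_2}r^{u_3}s^{u_4}=1$. We compute a gcd-free (coprime) basis of $\{c,b,r,s\}$, which takes $\operatorname{poly}(\log N)$ bit operations since all four arguments have $O(\log N)$ bits. Writing each argument in this basis turns $\Lambda=0$ into a finite linear system over $\mathbb Z$, checked with integers of size $N^{O(1)}$. The algorithm therefore runs the zero test first, and falls back on ball arithmetic only when the form is nonzero.

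\textbf{Main obstacle.} I expect the main difficulty to be bookkeeping rather than depth: ensuring that every floor, ceiling and outward rounding used in \cref{thm:safe-locator} really reduces to at most four logarithms with integer coefficients bounded by $N^{C+1}$. In particular, the directed grid endpoints of $t_H+(H+1)/Q$ must be handled by clearing the denominator $Q$ before comparing with grid points. I would enumerate these comparisons explicitly, as above, to confirm the bounds on $u_i$.
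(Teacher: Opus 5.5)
Your proposal is correct and follows the paper's proof essentially step for step. It uses Matveev's bound with two heights $O_{c,b}(1)$, two heights $O(\log N)$ and a coefficient factor $O(\log N)$, which gives the cubic exponent; it reduces (a)--(c) to signs of such forms with coefficients $O(N^{C+1})$, clearing $Q$ for the grid comparisons involving $t_H$; it then uses the radius-below-half-the-separation argument and a gcd-free basis to decide exact vanishing. The one step you leave implicit, which the paper states explicitly, is that each localization needs only $O(\log N)$ continued-fraction steps and $O((\log N)^2)$ floor-sum steps, so the per-evaluation precision bound really yields total polylogarithmic bit cost.
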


\begin{proof}
Matveev's theorem for a nonzero linear form in logarithms of rational
integers gives a lower bound whose logarithm is, up to a constant depending
only on the number of terms, the product of the logarithmic heights of the
arguments and $1+\log\max_i|u_i|$.  The two fixed arguments $c,b$ have
constant height, while $r,s\le2N+1$ have height $O(\log N)$ and
$\log\max_i|u_i|=O_C(\log N)$.  Hence the exponent is
$O_{c,b,C}((\log N)^3)$, proving
\cref{eq:endpoint-separation}.  This deliberately uniform cubic bound also
covers forms with only one variable integer argument, for which a smaller
power of $\log N$ would suffice.

We next reduce the listed comparisons to
\cref{eq:endpoint-linear-form}.  Comparing a rational $p/q$ with
$\log_b c$ amounts, after multiplication by $\log b$, to the sign of
$q\log c-p\log b$.  To round $Q\theta$ or determine its integer part, one
compares
\[
  QM\log c-Q\log(M+1)-v\log b
\]
with zero for an appropriate integer $v$.  Likewise, after multiplication
by $Q\log b$, a grid comparison involving $t_H$ and the rational error
budget reduces to
\[
  Q\log(M+H+1)-Q\log(M+1)-w\log b
\]
for an integer $w$; the terms $H+1$ or $H+3$ in the error budget are
absorbed into $w$.  Finally, the two prefix endpoints are the signs of
\[
  m\log c-k\log b-\log m,
  \qquad
  m\log c-k\log b-\log(m+1).
\]
All coefficients are $O_{c,b}(N^{C+1})$ under the displayed hypotheses.
Enlarging the fixed exponent parameter by one absorbs this constant for all
sufficiently large $N$; the remaining finite range is handled directly.
Once the evaluation radius is below half the
right-hand side of \cref{eq:endpoint-separation}, a nonzero interval cannot
straddle zero, proving the precision claim.  Standard algorithms evaluate
the required logarithms to that precision in time polynomial in the bit
  length.  There are $O(\log N)$ continued-fraction steps, together with
  $O((\log N)^2)$ multiprecision integer steps for one singleton floor-sum
  localization as in \cref{thm:safe-locator}; every step uses only a fixed
  number of arithmetic or logarithm evaluations on
$O_{c,b,C}((\log N)^3)$-bit balls.  Thus the precision bound indeed yields
the stated total polylogarithmic bit complexity, rather than merely a
per-evaluation precision estimate.

For completeness, exact equality does not require factoring the variable
integers.  Repeatedly split the fixed list $c,b,r,s$ by pairwise gcds until
a gcd-free basis $g_1,\ldots,g_v$ is obtained, and write each original
integer as a product of powers of these pairwise coprime bases.  Then
$\Lambda=0$ is equivalent to the vanishing of the exponent sum at every
$g_i$.  The list has bounded length, and gcd-free refinement uses
  polynomially many gcd and exact-division operations on $O(\log N)$-bit
  integers; see also~\cite{bachshallit1996}.  This proves the equality and
  complexity assertions.
\end{proof}

The fixed-threshold strategy can be sharpened by interpolating between two
successive safe denominators.  This removes the loss caused by waiting until
the next denominator is safe for a full block.

\begin{theorem}[Interpolated convergent blocks]
\label{thm:interpolated-blocks}
\label{cor:adaptive-complexity}
Let $p_k/q_k$ be the convergents of the irrational number $\rho$, after
discarding the possible initial repetition of the denominator $1$, and put
\begin{equation}\label{eq:interpolated-level-data}
  d_k=\|q_{k-1}\rho\|,
  \qquad
  \lambda_k=b^{d_k/2}-1,
  \qquad
  T_k=\frac{q_k}{\lambda_k}-1.
\end{equation}
The thresholds $T_k$ are strictly increasing and tend to infinity.  At a current left endpoint
$M$ in the range
\begin{equation}\label{eq:interpolated-active-level}
  T_k\le M<T_{k+1},
\end{equation}
set
\begin{equation}\label{eq:interpolated-block-length}
  A=(M+1)\lambda_{k+1},
  \qquad
  H_0=\max\{q_k,\lfloor A\rfloor\}.
\end{equation}
For each convergent denominator define its half-margin admissible length
\begin{equation}\label{eq:half-margin-admissible-length}
  H_j(M)=\min\{q_j,\lfloor(M+1)\lambda_j\rfloor\}.
\end{equation}
Whenever it is positive, $H_j(M)$ satisfies
\cref{eq:safe-locator-hypotheses} with the $j$th adjacent pair.
Then the interpolated length is the exact upper envelope
\begin{equation}\label{eq:interpolated-optimal-envelope}
  \boxed{H_0=\max_{j\ge1}H_j(M).}
\end{equation}
If $\lfloor A\rfloor\le q_k$, apply \cref{thm:safe-locator} with the
adjacent denominator pair $(q_{k-1},q_k)$; otherwise apply it with
$(q_k,q_{k+1})$.  In either case $(M,M+H_0]$ is reduced to at most one
candidate.  A final block may be truncated to
$H=\min\{H_0,N-M\}$.

If $\mu(\rho)<\infty$, then for every $\nu>\mu(\rho)$ this interpolated
search uses
\begin{equation}\label{eq:interpolated-block-count}
  O_{\rho,b,\nu}\left(N^{1-1/\nu}\right)
\end{equation}
blocks up to $N$.  For fixed multiplicatively independent integers $c,b$,
including certified construction of the convergents, the block length and
grid endpoints, Euclidean floor sums, and exact final verification, the
total bit complexity is
\begin{equation}\label{eq:adaptive-time}
  \boxed{
  O_{c,b,\nu}\left(
  N^{1-1/\nu}\operatorname{polylog}N\right).}
\end{equation}
If $\rho$ has bounded partial quotients, the exponent in
\cref{eq:adaptive-time} is $1/2$.
\end{theorem}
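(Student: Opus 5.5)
We will prove the structural claims of \cref{thm:interpolated-blocks} first and then the counting and complexity bounds. Strict monotonicity of $T_k$ follows from $q_{k+1}>q_k$ and $d_{k+1}<d_k$, since these give $\lambda_{k+1}<\lambda_k$; divergence follows from $q_k\to\infty$. For the admissibility of $H_j(M)$, note that $H_j\le q_j$ and $H_j\le(M+1)\lambda_j$. Hence
\[
t_{H_j}=\log_b\bigl(1+H_j/(M+1)\bigr)\le\log_b(1+\lambda_j)=d_j/2,
\]
which is exactly \cref{eq:safe-locator-hypotheses} for the pair $(q_{j-1},q_j)$.

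For the envelope identity \cref{eq:interpolated-optimal-envelope} we split the indices at the active level $k$.

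For $j\le k$ we have $H_j\le q_j\le q_k$. Moreover $M\ge T_k$ means $(M+1)\lambda_k\ge q_k$, so $H_k=q_k$.

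For $j=k+1$, $M<T_{k+1}$ gives $(M+1)\lambda_{k+1}<q_{k+1}$, so $H_{k+1}=\lfloor A\rfloor$.

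For $j\ge k+2$, $\lambda_j<\lambda_{k+1}$ gives $H_j\le\lfloor A\rfloor$.

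Hence $\max_j H_j=\max\{q_k,\lfloor A\rfloor\}=H_0$. The locator choice then just records which of $H_k$ and $H_{k+1}$ attains the maximum. \cref{thm:safe-locator} applied to that pair gives at most one candidate, and truncating the final block preserves the hypotheses because they are monotone in $H$.

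For the block count, fix $\nu'$ with $\mu(\rho)<\nu'<\nu$. Then $q_{k+1}\ll q_k^{\nu'-1}$ for all large $k$, and $\lambda_j\asymp d_j\asymp 1/q_j$. We will show $H_0\gg_{\rho,b,\nu}M^{1/\nu}$ for every $M$. On the level $T_k\le M<T_{k+1}$ we have $M\asymp$ somewhere in $[q_kq_k,\,q_{k+1}q_{k+1}]$ up to constants, since $T_j\asymp q_jq_j$.
\begin{itemize}
\item If $M\le q_kq_{k+1}$, then $H_0\ge q_k$. Also $M\ll q_k^{\nu'}$, so $q_k\gg M^{1/\nu'}$.
\item If $M>q_kq_{k+1}$, then $H_0\ge\lfloor A\rfloor\asymp M/q_{k+1}\ge M^{1/2}$.
\end{itemize}
This interpolation is the main obstacle: the case split must yield a uniform exponent $1/\nu$ rather than the $1/\nu'^{\,2}$-type loss that a fixed-threshold argument would give. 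The estimate $q_{k+1}\ll q_k^{\nu'-1}$ together with $M\ge T_k\asymp q_k^2$ is the step to check with care.

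With $H_0\gg M^{1/\nu}$ in hand, a dyadic range $[X,2X)$ is covered by $O(X^{1-1/\nu})$ blocks. Summing over dyadic ranges gives \cref{eq:interpolated-block-count}. Finitely many initial levels contribute $O(1)$.

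For the bit complexity, each block needs the following operations, with $Q$ chosen per \cref{eq:Q-safe}, so $Q\le N^{C}$:
\begin{itemize}
\item computing $\lambda_k$, $A$ and the grid endpoints;
\item one singleton floor-sum localization costing $O(\log H\log Q)=O((\log N)^2)$ integer steps;
\item at most one exact verification.
\end{itemize}
By \cref{lem:polylog-certification}, every comparison is certified at $O((\log N)^3)$ bits of precision in polylog time. The convergents up to $N^C$ are computed once, in $O(\log N)$ certified steps. Comparing $M$ with $T_k$ reduces to comparisons of $\lambda$ with rationals, handled by the same lemma with a fixed number of extra logarithms, or by directly enclosing $b^{d_k/2}$, whose exponent is a certified linear form. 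Multiplying the per-block cost by the block count gives \cref{eq:adaptive-time}.

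In the bounded partial quotient case $q_{k+1}\ll q_k$, so both cases give $H_0\gg M^{1/2}$. This yields exponent $1/2$.
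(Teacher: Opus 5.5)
Your structural part (monotonicity of $T_k$, half-margin admissibility of $H_j(M)$, the envelope identity via the split $j\le k$, $j=k+1$, $j\ge k+2$, the choice of locator pair, and truncation) coincides with the paper's. Your first case, $M\le q_kq_{k+1}$, is also correct.

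\textbf{The gap.} It lies in the second case, which is exactly the step you identified as the crux. The claim $\lfloor A\rfloor\asymp M/q_{k+1}\ge M^{1/2}$ is false.
\begin{itemize}
\item That inequality is equivalent to $M\ge q_{k+1}^2$. The level only gives $M<T_{k+1}\asymp_b q_{k+1}^2$, and in this case $M>q_kq_{k+1}$.
\item Just above $M=q_kq_{k+1}$ one has $H_0\asymp_b q_k$, while $M^{1/2}\asymp\sqrt{q_kq_{k+1}}$.
\item Since $q_{k+1}/q_k>a_{k+1}$, this ratio is unbounded whenever the partial quotients are unbounded.
\end{itemize}
So your case 2 never uses the Diophantine hypothesis, and it asserts something that holds only for bounded partial quotients. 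Falling back on $H_0\ge q_k$ together with $M<T_{k+1}\ll q_k^{2(\nu'-1)}$ yields only the exponent $1/(2\nu'-2)$. That is the very fixed-threshold loss you wanted to avoid.

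\textbf{The repair.} It is the paper's argument. The relevant input in case 2 is the lower bound $M\gg q_kq_{k+1}$ (equivalently $A\gg q_k$), not $M\ge T_k$. Combine it with $q_k\gg q_{k+1}^{1/(\nu'-1)}$ to get $M\gg q_{k+1}^{\nu'/(\nu'-1)}$, i.e.\ $q_{k+1}\ll M^{(\nu'-1)/\nu'}$. Hence $H_0\ge\lfloor A\rfloor\gg M/q_{k+1}\gg M^{1/\nu'}$. Then $H_0\gg M^{1/\nu}$ holds in both cases, and your dyadic count goes through.

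\textbf{A smaller point in the complexity part.} You take for granted that every convergent you need lies below $N^C$. For $q_{k+1}$, which enters the level comparison $M<T_{k+1}$, this requires $q_k\ll_b N^{1/2}$ (from $T_k\le M\le N$) together with $q_{k+1}\ll q_k^{\nu'-1}$. The paper then applies the certification lemma with ambient parameter $\max\{N,q_{k+1}\}$.
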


\begin{proof}
The errors $d_k$ strictly decrease to zero while the denominators $q_k$
strictly increase to infinity.  Hence $\lambda_k$ strictly decreases to
zero, while $T_k$ strictly increases to infinity.  Moreover, by
\cref{eq:best-separation},
\begin{equation}\label{eq:lambda-denominator-size}
  \lambda_k\asymp_b\frac1{q_k}.
\end{equation}

We first verify the two locator branches, including all floor effects.  If
$\lfloor A\rfloor\le q_k$, then $H_0=q_k$.  The lower inequality in
\cref{eq:interpolated-active-level} gives
\[
  \frac{H_0}{M+1}=\frac{q_k}{M+1}\le\lambda_k,
\]
and therefore
\[
  t_{H_0}=\log_b\left(1+\frac{H_0}{M+1}\right)
  \le\log_b(1+\lambda_k)=\frac{d_k}{2}.
\]
Thus \cref{eq:safe-locator-hypotheses} holds for
$(q_{k-1},q_k)$.  If $\lfloor A\rfloor>q_k$, then $H_0=\lfloor A\rfloor$.
The upper inequality in \cref{eq:interpolated-active-level} yields
\[
  q_k<H_0<q_{k+1},
  \qquad
  \frac{H_0}{M+1}\le\lambda_{k+1}.
\]
Hence $H_0\le q_{k+1}$ and
\[
  t_{H_0}\le\log_b(1+\lambda_{k+1})=\frac{d_{k+1}}2,
\]
which is \cref{eq:safe-locator-hypotheses} for $(q_k,q_{k+1})$.  In the
relevant branch choose a grid modulus
\[
  Q>16(H_0+3)(q_j+q_{j-1}),
\]
where $q_j$ is the larger denominator being used.  The one-candidate
assertion follows from \cref{thm:safe-locator}; truncation only strengthens
its hypotheses.

For every positive $H_j(M)$, its definition gives
$H_j(M)\le q_j$ and $H_j(M)/(M+1)\le\lambda_j$, hence
$t_{H_j(M)}\le d_j/2$.  Thus it is half-margin admissible.  The same
inequalities prove the optimal-envelope statement.  Indeed,
$M\ge T_k$ gives $H_k(M)=q_k$.  For $j\le k$ one has
$H_j(M)\le q_j\le q_k$, whereas for $j\ge k+1$ the monotonicity of
$\lambda_j$ gives
\[
  H_j(M)\le\lfloor(M+1)\lambda_j\rfloor
  \le\lfloor A\rfloor.
\]
Finally, $A<q_{k+1}$ gives
$H_{k+1}(M)=\lfloor A\rfloor$.  Thus the maximum over all convergents is
exactly \cref{eq:interpolated-block-length}, proving
\cref{eq:interpolated-optimal-envelope}.

Fix $\nu>\mu(\rho)$.  Adjacent denominators satisfy
\begin{equation}\label{eq:adjacent-denominator-growth}
  q_{k+1}\ll_{\rho,\nu}q_k^{\nu-1}.
\end{equation}
In the first branch, $A<q_k+1$, so by
\cref{eq:lambda-denominator-size},
\[
  M+1\ll_b q_kq_{k+1}
  \ll_{\rho,b,\nu}q_k^\nu.
\]
Thus $H_0=q_k\gg M^{1/\nu}$.  In the second branch, $A>q_k$ and hence
$M+1\gg_bq_kq_{k+1}$.  Combining this with
\cref{eq:adjacent-denominator-growth} gives
\[
  q_{k+1}\ll_{\rho,b,\nu}M^{(\nu-1)/\nu}.
\]
Also $A>q_k\ge2$ beyond a finite initial range, so
$H_0=\lfloor A\rfloor\ge A/2$.  Consequently,
\[
  H_0\gg_b\frac{M}{q_{k+1}}
  \gg_{\rho,b,\nu}M^{1/\nu}.
\]
Every full block starting in a dyadic interval $[X,2X)$ therefore has
length $\gg X^{1/\nu}$.  Summing the resulting
$O(X^{1-1/\nu})$ bound over dyadic intervals proves
\cref{eq:interpolated-block-count}.

It remains to justify certification rather than only real-arithmetic
operation counts.  From $T_k\le M\le N$ and
\cref{eq:lambda-denominator-size}, one has $q_k\ll_bN^{1/2}$; then
\cref{eq:adjacent-denominator-growth} gives
$q_{k+1}\ll_{\rho,\nu}N^{(\nu-1)/2}$.  Thus the required grid modulus may
be chosen with $Q\le N^{O_\nu(1)}$.  Choosing the active index $k$ amounts
to certifying the two threshold comparisons in
\cref{eq:interpolated-active-level}.  The comparison defining
$H_0=\lfloor(M+1)(b^{d_{k+1}/2}-1)\rfloor$ is, after taking logarithms,
a sign comparison of
\[
  2\log\frac{M+r+1}{M+1}-d_{k+1}\log b
\]
for an appropriate integer $r$.  Since $d_{k+1}\log b$ is the absolute
value of $q_k\log c-v\log b$ for an integer $v$, this is a linear form in
the logarithms of $c,b,M+1,M+r+1$.  For a full block $r\le N-M$; for the
last truncated block it suffices to compare $A$ with $N-M$ rather than
construct a larger $H_0$.  More generally, the active denominators and all
arguments and coefficients in the floor and threshold comparisons are
bounded by $N^{O_\nu(1)}$.  More explicitly, one may apply
\cref{lem:polylog-certification} with ambient parameter
$N'=\max\{N,q_{k+1}\}$ and enlarge it by a fixed power to include $Q$.
Since $\log N'=O_\nu(\log N)$, this certifies the choice of $k$ and $H_0$,
as well as every convergent, phase, grid-endpoint, floor-sum, and final
prefix comparison, with polylogarithmic bit cost per block.  A large partial
quotient is processed by one Euclidean integer-quotient operation, not by
repeated subtraction.  Multiplication by
\cref{eq:interpolated-block-count} proves \cref{eq:adaptive-time}.

When the partial quotients are bounded, $q_{k+1}\ll q_k$.  Both branches
then give $H_0\gg M^{1/2}$, yielding exponent $1/2$.
\end{proof}

\begin{corollary}[Exact-output certified search]
\label{cor:exact-output-certified-search}
Fix multiplicatively independent integers $c,b\ge2$, suppose
$\mu(\{\log_b c\})<\infty$, and let
$\nu>\mu(\{\log_b c\})$.  There is a deterministic certified algorithm
which, on binary input $N$, returns exactly
$\Sset_{c,b}\cap[1,N]$ in
\[
  O_{c,b,\nu}\left(
  N^{1-1/\nu}\operatorname{polylog}N\right)
\]
bit operations.  The rational-grid locator may internally report a strict
superset, but each reported index is subjected to the original inequalities
in \cref{eq:def-S}; unverified reports are never included in the output.
\end{corollary}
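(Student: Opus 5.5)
The algorithm runs in three layers: a direct check on a finite initial segment, then a sweep of the remaining range by the interpolated blocks of \cref{thm:interpolated-blocks}, then exact verification of every reported index.

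\textbf{Setup.} By \cref{prop:rational-transcendental-dichotomy}, multiplicative independence makes $\alpha=\log_b c$ irrational, hence $\rho=\{\alpha\}$ is irrational with $0<\rho<1$, and $c\ge2$ holds since $c$ is an integer. This is exactly the setting of \cref{lem:phase-bins} and \cref{thm:interpolated-blocks}. Since $\rho$ and $\alpha$ differ by an integer, $\mu(\rho)=\mu(\alpha)$, so the hypothesis $\nu>\mu(\{\log_b c\})$ is the one required there.

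\textbf{Initial range.} Fix a threshold $M_*=M_*(c,b)$ beyond which the following hold:
\begin{itemize}
\item the interpolated construction is active, i.e.\ $M_*\ge T_{k_0}$ for the first usable level;
\item the estimates $A>q_k\ge2$ used in the block-length argument are valid;
\item the certification lemma applies with $N$ in its large range.
\end{itemize}
Every $m\le\min\{M_*,N\}$ is tested directly against $mb^k\le c^m<(m+1)b^k$ with $k=\lfloor\log_b(c^m/m)\rfloor$, which is unique by the argument in \cref{cor:floor-resonance-phase-window}. This costs $O_{c,b}(1)$.

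\textbf{Block sweep.} Starting at $M=M_*$, repeat until $M\ge N$:
\begin{enumerate}
\item Certify the active level $k$ with $T_k\le M<T_{k+1}$.
\item Compute $H_0$ as in \cref{eq:interpolated-block-length} and set $H=\min\{H_0,N-M\}$.
\item Pick the adjacent pair required by \cref{thm:interpolated-blocks} and a grid modulus $Q$ satisfying \cref{eq:Q-safe} with $Q\le N^{O_\nu(1)}$.
\item Run the safe locator of \cref{thm:safe-locator} to get $\mathcal R(M,H)$, which has at most one element and contains every genuine solution in $(M,M+H]$.
\item If an index $j$ is reported, test $M+j$ against the original inequalities of \cref{eq:def-S} with its unique scale $k$. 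Output it only if both comparisons succeed.
\item Set $M\leftarrow M+H$.
\end{enumerate}

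\textbf{Correctness.} The blocks $(M,M+H]$ tile $(M_*,N]$ without gaps, so every element of $\Sset_{c,b}\cap[1,N]$ is either checked directly or reported and then verified. No unverified index is ever output. The output is therefore exactly $\Sset_{c,b}\cap[1,N]$.

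\textbf{Complexity.} \Cref{eq:interpolated-block-count} bounds the number of blocks by $O(N^{1-1/\nu})$; only the final block can be truncated, adding one more. Per block, everything reduces to signs of linear forms covered by \cref{lem:polylog-certification}:
\begin{itemize}
\item constructing the convergents up to $q_{k+1}\le N^{O_\nu(1)}$;
\item the threshold comparisons selecting $k$ and $H_0$;
\item the roundings $B,P$ and the outward grid endpoints;
\item the two final prefix comparisons.
\end{itemize}
The lemma is applied with the ambient parameter enlarged to a fixed power of $N$. Each of these comparisons costs $\operatorname{polylog}N$ bits. The $O(\log H\log Q)$ floor-sum steps operate on $O(\log N)$-bit integers. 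Convergents can be generated once and reused incrementally as $k$ increases, so their total cost is also polylogarithmic. Multiplying the per-block cost by the block count gives the stated bound.

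\textbf{Main obstacle.} The delicate point is that the certification must be uniform. The relevant linear forms must stay nonzero, or their vanishing must be decided exactly, so that ball arithmetic terminates at $O((\log N)^3)$ precision. This is where multiplicative independence is used again. For example, a form such as $Q\log(M+H+1)-Q\log(M+1)-w\log b$ may vanish, and then the gcd-free basis branch of the lemma must be invoked. I would handle every such vanishing case through that branch rather than assume nonvanishing.
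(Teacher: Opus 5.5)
Your proposal is correct and follows the paper's own argument. You directly check a finite parameter-dependent initial range, sweep the remainder with the interpolated blocks of \cref{thm:interpolated-blocks} (whose locator never omits a genuine hit), and certify each reported index through the final endpoint tests of \cref{lem:polylog-certification}, with the complexity inherited from \cref{eq:adaptive-time}; you also spell out the tiling, truncation, and vanishing-form bookkeeping that the paper leaves implicit.
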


\begin{proof}
Apply \cref{thm:interpolated-blocks} after directly checking its finite
parameter-dependent initial range.  The locator never omits a genuine hit,
and \cref{lem:polylog-certification} certifies the final endpoint tests.  The
complexity is already included in \cref{eq:adaptive-time}.
\end{proof}

\begin{corollary}[Certified constants for $2^m$]\label{cor:two-safe-block}
For $\rho=\log_{10}2$, two adjacent convergents are
\[
  \frac{1\,936\,274}{6\,432\,163},
  \qquad
  \frac{13\,456\,039}{44\,699\,994},
\]
and
\[
  d=\|6\,432\,163\log_{10}2\|
  =2.0276549588410307779\ldots\times10^{-8}.
\]
The least integer satisfying the safe threshold is
\begin{equation}\label{eq:two-safe-threshold}
  \boxed{M_{\rm safe}=1\,914\,818\,931\,502\,442.}
\end{equation}
Thus, for every $M\ge M_{\rm safe}$ and every
$H\le44\,699\,994$, the entire block $(M,M+H]$ is reduced by
\cref{thm:safe-locator} to at most one candidate for exact verification.
The convergents,
the strict inequality at \cref{eq:two-safe-threshold}, and failure at the
preceding integer were certified with $1024$-bit ball arithmetic.
\end{corollary}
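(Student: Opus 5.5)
The statement is a certified numerical instance of \cref{thm:safe-locator} via the sufficient condition \cref{eq:safe-threshold}, so the plan is computational verification organized to make every comparison exact.

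First, I would certify the convergents. I would run the continued-fraction algorithm on $\rho=\log_{10}2$ using ball arithmetic at $1024$ bits. At each step the partial quotient is a floor of a real number, decided by a sign comparison of a linear form $q\log2-p\log10$. These signs are nonzero because $2$ and $10$ are multiplicatively independent. By \cref{lem:polylog-certification}(a), with $1024$ bits the balls stay far from zero for denominators of this size, so no comparison is ambiguous. This certifies that $1\,936\,274/6\,432\,163$ and $13\,456\,039/44\,699\,994$ are adjacent convergents $p_{k-1}/q_{k-1}$, $p_k/q_k$. As a cross-check I would confirm the determinant $|p_kq_{k-1}-p_{k-1}q_k|=1$ exactly in integers. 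I would also certify $d=\|q_{k-1}\rho\|=|6\,432\,163\log_{10}2-1\,936\,274|$ by one ball evaluation, giving the displayed digits together with an error radius far smaller than the last digit.

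Second, I would compute the threshold. With $d_k=d$, the condition \cref{eq:safe-threshold} reads $M\ge q_k/(10^{d/2}-1)-1$. I would evaluate $T=q_k/(10^{d/2}-1)-1$ in ball arithmetic and set $M_{\rm safe}=\lceil T\rceil$. The delicate point is deciding the ceiling rigorously: I must show $T\le M_{\rm safe}$ strictly (or with equality allowed) and $T>M_{\rm safe}-1$.

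Equivalently, these are the signs of $2\log\bigl((M+1+q_k)/(M+1)\bigr)-d\log10$ at $M=M_{\rm safe}$ and $M=M_{\rm safe}-1$. Since $d\log10=|q_{k-1}\log2-p_{k-1}\log10|$, each is a nonzero linear form in $\log2,\log10,\log(M+1),\log(M+1+q_k)$. Nonvanishing holds because an equality would force a multiplicative relation among integers that the gcd-free-basis test of \cref{lem:polylog-certification} rules out. The lower bound \cref{eq:endpoint-separation} therefore guarantees that sufficient precision separates the value from zero. I expect $1024$ bits to be ample, since $T\approx1.9\times10^{15}$ and the relevant gaps are nowhere near $2^{-900}$. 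This step, the strict inequality at $M_{\rm safe}$ and failure at $M_{\rm safe}-1$, is the main obstacle: it is the only place where the answer sits near an integer boundary, and an uncertified floating computation could be off by one. I would confirm it by checking that the ball enclosing $T$ lies strictly inside $(M_{\rm safe}-1,M_{\rm safe})$ or otherwise decides the sign.

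Finally, for $M\ge M_{\rm safe}$ the threshold holds because the right side of \cref{eq:safe-threshold} does not depend on $M$. For $H\le q_k=44\,699\,994$ both hypotheses of \cref{eq:safe-threshold}, and hence \cref{eq:safe-locator-hypotheses}, hold. \cref{thm:safe-locator} then reduces $(M,M+H]$ to at most one reported candidate, with every genuine hit included.
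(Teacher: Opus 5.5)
Your proposal is correct and follows essentially the paper's route. You certify the adjacent convergents and $d$ in ball arithmetic, certify the strict inequality $M_{\rm safe}>q_k/(10^{d/2}-1)-1$ and its failure at $M_{\rm safe}-1$, and then invoke \cref{thm:safe-locator} for every $H\le q_k$ through the $M$-independent sufficient condition \cref{eq:safe-threshold}. One small point: appealing to \cref{lem:polylog-certification} to predict that $1024$ bits suffice is unnecessary, and its constant $K$ is not explicit in any case, since the certificate is simply that each computed ball excludes zero, which also proves directly that the relevant linear forms are nonzero.
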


\begin{remark}
The computational supplementary archive contains a standalone verifier and
its machine-readable output:
\begin{center}
  \small
  \nolinkurl{supplement/verify_safe_singleton_threshold.py}\\
  \nolinkurl{supplement/safe_singleton_threshold_certificate.json}.
\end{center}
The
dependency is pinned as \nolinkurl{python-flint==0.6.0}; the verifier has
separate SHA-256 digests for the script, canonical JSON, and requirements
file in the supplementary README.  That README also records the tested
CPython environment and a
\texttt{--verify-certificate} command that recomputes every Arb comparison
and requires the saved JSON to agree byte for byte.

The fixed denominator in \cref{cor:two-safe-block} reduces the number of
high-precision final checks by a factor on the order of $4.47\times10^7$,
but it does not by itself produce a sublinear global algorithm.  Sublinear
complexity requires increasing convergent denominators, and none of these
upper bounds proves that a block is nonempty.
\end{remark}

%% file: sections/en/07_fixed_parameter_problems.tex
\section{Open problems at the critical scale}

The exact status of \cref{eq:core-case} can now be summarized without
heuristic shortcuts.

\begin{problem}[Infinitude for $2^m$]\label{prob:two-infinite}
Does \cref{eq:two-exact-one-sided} hold for infinitely many $j$?  Equivalently,
is $\Sset_{2,10}$ infinite?
\end{problem}

\begin{problem}[The logarithmic law]
Is
\[
  A_{2,10}(X)\sim\log_{10}X?
\]
By \cref{thm:exact-discrepancy}, this is equivalent to the signed
discrepancy estimate $D_{\log_{10}2,10}(X)=o(\log X)$.
\end{problem}

\begin{problem}[Critical one-sided approximation]
Determine the sign pattern of
\[
  d_+(x_j)-(z_j-x_j),
\]
where both endpoints are given exactly by \cref{eq:xz-lambert}.  The
all-orders expansion \cref{eq:lagrange-width} makes the target explicit;
the missing input is a deterministic one-sided approximation theorem for
the prescribed nonlinear sequence $x_j$ at scale $1/j$.
\end{problem}

The power-saving discrepancy theorem and its subcritical moving-target
consequence, \cref{prop:two-log-discrepancy,thm:two-subcritical-moving},
the structural results above, and every finite certified computation remain
compatible with either finiteness or infinitude.  They stop strictly before
the critical scale $1/j$.  This logical boundary is the main obstacle rather
than a numerical inability to generate further candidates.  In particular,
the certified locator should be read as a rigorous reduction of the search
space, not as evidence for a proof by exhaustion.

%% file: sections/en/08_submission_declarations.tex
\section*{Code and data availability}

The standalone verifier, machine-readable certificate, pinned dependency,
cryptographic checksums, and reproducibility instructions supporting the
certified computation in Corollary~\ref{cor:two-safe-block} are publicly
available in the versioned repository \cite{fang2026selfprefixcode}, release
v1.0.0, and are also provided as supplementary material with this article.

\section*{Funding}

This research did not receive any specific grant from funding agencies in the
public, commercial, or not-for-profit sectors.

\section*{Declaration of competing interest}

The author declares no competing interests.

\section*{CRediT authorship contribution statement}

\textbf{Zihang Fang:} Conceptualization, Formal analysis, Investigation,
Methodology, Software, Validation, Writing--original draft,
Writing--review and editing.

\section*{Declaration of generative AI and AI-assisted technologies in the
manuscript preparation process}

During the preparation of this work, the author used ChatGPT and Codex
(OpenAI) to assist with literature searches and proof-checking.  The author
independently checked all mathematical arguments, references, and
computations, reviewed and edited the resulting material as needed, and takes
full responsibility for the content of the published article.